\documentclass[11pt]{amsart}
\usepackage{graphicx}
\usepackage[USenglish]{babel}
\usepackage[T1]{fontenc}
\usepackage[latin1]{inputenc}
\usepackage{amsfonts}
\usepackage{amssymb}
\usepackage{amsthm}
\usepackage{amsmath}
\usepackage{fancyhdr}
\usepackage{transparent}
\usepackage{graphicx}

\usepackage{tikz}
\usepackage{tikz-cd}
\usetikzlibrary{decorations.pathreplacing}
\usetikzlibrary{shapes.misc}
\tikzset{cross/.style={cross out, draw=black, fill=none, minimum size=2*(#1-\pgflinewidth), inner sep=0pt, outer sep=0pt}, cross/.default={2pt}}
\usepackage{float}
\usepackage{enumerate}
\usepackage{accents}
\usepackage{mathtools}
\usepackage{mathrsfs}
\usepackage{comment}
\usepackage{afterpage}
\usepackage{bbm}
\usepackage{dsfont}
\usepackage{stmaryrd}
\usepackage{mleftright}
\usepackage{subfig}
\usepackage{faktor}
\usepackage{graphicx}
\usepackage{marvosym}
\usepackage[all]{xy}
\usepackage[linktocpage]{hyperref}
\usepackage{xcolor}

\newtheorem{theorem}{Theorem}
\numberwithin{theorem}{section}

\newtheorem{lemma}[theorem]{Lemma}

\newtheorem{proposition}[theorem]{Proposition}

\newtheorem{corollary}[theorem]{Corollary}

\theoremstyle{remark}
\newtheorem{con}[theorem]{Construction}
\newtheorem{remark}[theorem]{Remark}

\theoremstyle{definition}
\newtheorem{definition}[theorem]{Definition}
\newtheorem{example}[theorem]{Example}
\newtheorem{notation}[theorem]{Notation}

\newcounter{mcomments}

\newcounter{tcomments}


\renewcommand{\phi}{\varphi}

\newcommand{\bN}{\mathbb{N}}
\newcommand{\qand}{\quad \text{and} \quad}

\newcommand{\cE}{\mathcal E}

\newcommand{\Id}{\mathrm{Id}}
\newcommand{\Aut}{\mathrm{Aut}}

\newcommand{\Sym}{\mathrm{Sym}}

\newcommand{\id}{\mathrm{id}}

\newcommand{\Mor}{S}

\newcommand{\pre}[2]{\prescript{}{#1 \cdot}{ #2}}

\DeclareMathOperator{\Rist}{Rist}

\DeclareMathOperator{\ret}{ret}

\title[Self-similar trees]{Automorphisms of self-similar trees}

\author{Tobias Hartnick}
\address{Institut f\"ur Algebra und Geometrie, KIT, Germany}
\email{tobias.hartnick@kit.edu}

\author{Merlin Incerti--Medici}
\address{Universit\"at Wien, Austria}
\email{merlin.medici@gmail.com}

\begin{document}
\maketitle

\begin{abstract} We explicitly determine the automorphism groups of all self-similar trees (a.k.a.\ trees with finitely many cone types). We show that any such automorphism group is a direct limit of certain finite products of finite symmetric groups, which are parametrized by a certain deterministic finite automaton. 
\end{abstract}

\tableofcontents




\section{Introduction} \label{sec:Introduction}

\subsection{History and background}
The study of rooted automorphisms of locally finite rooted trees is a central subject in group theory, geometry and dynamics.
If $T$ is a $q$-regular rooted tree, then its rooted automorphism group $G = \Aut(T)$ satisfies the \emph{wreath recursion} $G\cong \mathrm{Sym}_q \wr G$, and this recursion determines the algebraic structure of $G$ uniquely. Namely, if we denote by $\Rist(n)$ the rigid stabilizer of the $n$th level of $T$, then
\[
G \cong \mathrm{Sym}_q \ltimes \Rist(1) \cong \mathrm{Sym}_q \ltimes (\mathrm{Sym}_q \ltimes \Rist(2)) \cong \dots,
\]
and $G$ is isomorphic to the direct limit of the iterated semidirect products
\[
\mathrm{Sym}_q \ltimes (\mathrm{Sym}_q \ltimes ( \dots \ltimes \mathrm{Sym}_q)).
\]
A similar presentation of $\Aut(T)$ is still available if $T$ is spherically homogeneous; in this case, $\Aut(T)$ is the direct limit of iterated semidirect products of finite symmetric groups of the appropriate cardinality. Beyond the spherically homogeneous case, a full  description of the algebraic structure of the group $\Aut(T)$ seems to be missing from the literature.

The goal of the present article is to fill this gap by describing the structure of $\Aut(T)$ for $T$ in a large class of locally finite trees. The class of trees we will consider has appeared under various names in different areas of mathematics, in particular as \emph{periodic trees} \cite{Lyons} or \emph{trees with finitely many cone types} \cite{NagnibedaWoess02} in probability theory, as \emph{trees of finite cone type} in spectral graph theory \cite{KellerLenzWarzel12b, KellerLenzWarzel14} and, most recently, as \emph{self-similar trees} in geometric group theory \cite{BelkBleakMatucci21}. We prefer the latter term. We are going to show that, as in the spherically homogeneous case, the automorphism group of a self-similar tree is isomorphic to a certain direct limit of iterated semidirect products of symmetric groups which are parametrized by a finite automaton.

From a dynamical perspective, the class of self-similar trees is closely related to subshifts of finite type, and from the perspective of theoretical computer science it is closely related to regular languages and deterministic finite automata (DFAs). It turns out that this latter point of view is particularly beneficial for the study of the automorphism group.

\subsection{Geometrically minimal DFAs}
Given a finite non-empty set $\Sigma$, we define\footnote{There are many variations on the definition of a DFA in the literature, but all of these variations lead to the same class of languages which is known as \emph{regular languages}; for us it is technically convenient to work with the present definition.} a \emph{DFA} over $\Sigma$ as a triple $\mathcal{A} = (Q, \delta, \epsilon)$ consisting of a finite non-empty set $Q$ (whose elements are called \emph{states}), a distinguished element $\epsilon \in Q$ (the \emph{initial state}) and a partial function $\delta : Q \times \Sigma \rightarrow Q$ (the \emph{transition function}). We say that a word $w = \alpha_1 \cdots \alpha_n \in \Sigma^*$ is \emph{accepted by $\mathcal A$} if there exists a sequence of states $s_0, \dots s_n \in Q$, such that  $s_0 = \epsilon$ and $s_{i+1} = \delta(s_i, \alpha_{i+1})$ for $i \in \{ 0, \dots, n-1 \}$; these words then form the \emph{associated language}  $\mathcal L_{\mathcal A}$ of $\mathcal A$. 
For every word $v \in \mathcal L_{\mathcal A}$ we denote by $q(v)$ the state of $\mathcal Q$ reached from $\epsilon$ after reading $v$.

Every DFA $\mathcal A = (Q, \delta, \epsilon)$ over $\Sigma$ gives rise to a $\Sigma$-labelled oriented rooted tree $T_{\mathcal A}$, called the \emph{associated path language tree}. The vertices of $T_{\mathcal A}$ are the words accepted by $\mathcal A$ and the oriented edges labelled by $\sigma \in \Sigma$ are pairs of the form $(w, w\sigma)$ with $w,w\sigma \in \mathcal L$. One can show that every self-similar tree $T$ is isomorphic to a path language tree of some DFA $\mathcal A$ (see Theorem \ref{BBMThm} below).

It is a classical problem in formal language theory to decide whether the path language trees of two DFAs are isomorphic as labelled trees. This problem is solved by \emph{Myhill-Nerode theory}:  Among all DFAs which generate the same labelled path language tree $(T, \ell)$ there is a DFA with the minimal number of vertices (or, equivalently, edges), and this DFA is unique up to isomorphism. It is known as the \emph{minimal (or Myhill-Nerode) DFA} and can be computed from any other DFA generating $(T, \ell)$ e.g.\ by Moore's algorithm \cite[Chapter 4]{BerstelBoassonCartonFagnot21}. Since we are interested in automorphisms of self-similar trees which do not necessarily preserve any given labelling, we will need a variant of these results in the unlabelled setting:

\begin{definition}
A DFA $\mathcal A$ is a \emph{geometrically minimal automaton} for a self-similar tree $T$ if $T$ and $T_\mathcal A$ are isomorphic as unlabelled trees and if $\mathcal A$ has the minimal number of vertices among all such DFAs.
\end{definition}

Geometrically minimal automata are unique in the following sense:

\begin{proposition}\label{GeomMinIntro}
If $\mathcal A$ and $\mathcal A'$ are two geometrically minimal automata for the same self-similar tree $T$, then the underlying rooted directed multigraphs of $\mathcal A$ and $\mathcal A'$ are isomorphic.
\end{proposition}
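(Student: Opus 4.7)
The plan is to show that every geometrically minimal DFA for $T$ is isomorphic, as a rooted directed multigraph, to a canonical ``cone type automaton'' $\mathcal{A}_T$ built directly from $T$. Write $\mathcal{C}(T)$ for the (finite) set of isomorphism classes of rooted subtrees of $T$, i.e.\ the unlabelled cone types.

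The first step is a well-definedness lemma: for any DFA $\mathcal{A} = (Q, \delta, \epsilon)$ with $T_\mathcal{A} \cong T$ as unlabelled rooted trees, the rooted subtree of $T_\mathcal{A}$ below an accepted word $w$ depends only on $q(w)$, because the descendants of $w$ are generated by iterating $\delta$ starting from $q(w)$. Noting that any unreachable state can be removed without changing $T_\mathcal{A}$ and hence does not occur in a geometrically minimal DFA, I would obtain a well-defined map $\Phi_\mathcal{A} : Q \to \mathcal{C}(T)$ sending each $q$ to the unlabelled cone type of any $w$ with $q(w) = q$. This map is surjective because $T_\mathcal{A} \cong T$ realises every cone type of $T$ at some vertex $w$, which then satisfies $\Phi_\mathcal{A}(q(w)) = C$.

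The second step is to build $\mathcal{A}_T$ itself: take state set $\mathcal{C}(T)$, initial state the cone type of the root of $T$, alphabet $\Sigma = \{1, \ldots, d\}$ where $d = \max_{C} m_C$ is the maximal number of children of a vertex in $T$, and for each $C \in \mathcal{C}(T)$ fix a representative vertex $v_C$ realising $C$ together with an enumeration $v_C^{(1)}, \ldots, v_C^{(m_C)}$ of its children, setting $\delta(C, i) := $ cone type of $v_C^{(i)}$ for $i \leq m_C$. A level-by-level induction, using that any two vertices of the same cone type have children realising the same multiset of cone types, produces an isomorphism $T_{\mathcal{A}_T} \cong T$ of unlabelled rooted trees. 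In particular this exhibits a DFA for $T$ with $|\mathcal{C}(T)|$ states, bounding the geometrically minimal state count from above.

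Combining the two steps, if $\mathcal{A}$ is geometrically minimal then $|Q| = |\mathcal{C}(T)|$ and $\Phi_\mathcal{A}$ is a bijection. It then remains to check that $\Phi_\mathcal{A}$ is an isomorphism of rooted directed multigraphs $\mathcal{A} \to \mathcal{A}_T$: it sends $\epsilon$ to the cone type of the root of $T$, and for each $q \in Q$ the multiset $\{\Phi_\mathcal{A}(\delta(q, \sigma)) : \delta(q, \sigma) \text{ defined}\}$ equals the multiset of cone types of children of any vertex of cone type $\Phi_\mathcal{A}(q)$, which by construction coincides with the multiset of edge-targets at $\Phi_\mathcal{A}(q)$ in $\mathcal{A}_T$. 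Applying this to both $\mathcal{A}$ and $\mathcal{A}'$ and composing yields the required isomorphism. The main obstacle is the verification that $T_{\mathcal{A}_T} \cong T$ as unlabelled rooted trees: the arbitrary-looking choices of representatives $v_C$ and enumerations of their children must be shown to yield a path language tree isomorphic to $T$ regardless, which is precisely where the defining property of cone types is used to match children inductively, level by level.
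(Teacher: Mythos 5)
Your proposal is correct and follows essentially the same route as the paper: the paper's Construction \ref{MinQuot} builds exactly your canonical cone-type automaton (called there the \emph{minimal covering quotient} $\overline{A}$), Proposition \ref{MinCovQ1} gives the surjection from the states of any generating DFA onto the cone types, and Corollary \ref{MinCovQ2} together with Lemma \ref{InjectiveCovering} performs your pigeonhole step to upgrade the surjection to a multigraph isomorphism when the state count is minimal. The only difference is one of packaging: the paper phrases the surjection as a covering morphism $A\to\overline A$ (which it reuses elsewhere), whereas you argue directly by matching multisets of children cone types, but the underlying ideas and the load-bearing lemma (vertices of the same cone type have children realizing the same multiset of cone types) are identical.
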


Proposition \ref{GeomMinIntro} will be proved in Subsection \ref{subsec:minimal.covering.quotients}. Similar to the labelled case, geometrically minimal automata can be computed from any generating automaton using a version of Moore's algorithm (see Subsection \ref{subsec:Moore.revisited}). It is easy to see that every geometrically minimal automaton for $T$ is a minimal automaton for a suitable labelling of $T$, but the converse is not true; in general, any minimal automaton will be a covering of a geometrically minimal automaton (see Example \ref{exam:minimal.vs.geometrically.minimal}).

\subsection{Portraits of automorphisms}
From now on let $T = T_{\mathcal A}$ be the path language tree of a DFA $\mathcal A$. Given a vertex $v$ of $T$ we denote by $\Sigma(v) \subset \Sigma$ the set of labels of the outgoing edges from $v$. Then for every $g\in \Aut(T)$ there exists a unique injection $\pre{v}\sigma(g): \Sigma(v) \to \Sigma$ such that

\begin{equation*}
g(v\alpha) = g(v)\pre{v}\sigma(g)(\alpha) \quad \text{for all }\alpha \in \Sigma(v).
\end{equation*}

\begin{definition}
The collection $\sigma(g) = (\pre{v}\sigma(g))_{v \in \mathcal L}$ of maps is called the \emph{portrait} of $g$ with respect to $\mathcal A$.
\end{definition}

While every automorphism is uniquely determined by its portrait, it is in general hard to decide whether a given collection of maps is the portrait of an automorphism. However, if $\mathcal A$ happens to be geometrically minimal, then there is an easy criterion.

\begin{notation}
Given states $q, q'$ of $\mathcal A = (Q, \delta, \epsilon)$ we denote 
\[
\Sigma(q,q') := \{\alpha \in \Sigma \mid \delta(q, \alpha) = q'\} \qand \Sigma(q) := \bigsqcup_{q'} \Sigma(q, q').
\]
One can show that $\Sigma(v) = \Sigma(q(v))$ for every $v \in \mathcal L$. We then define the group of \emph{admissible permutations} at $q$ as
\[
\mathrm{Sym}(q) := \prod_{q'} \mathrm{Sym}(\Sigma(q,q')) < \mathrm{Sym}(\Sigma(q)). 
\]
We note that this group acts by automorphisms on the underlying directed multigraph $A$ of $\mathcal A$ by fixing all vertices and permuting the edges which emanate from $q$.
\end{notation}

\begin{theorem}[Portraits of automorphisms]\label{IntroMain1}
If $\mathcal A$ is geometrically minimal, then $(\pre{v}\sigma)_{v}$ is the portrait of an automorphism if and only if $\pre{v}\sigma \in \Sym(q(v))$ for every $v \in \mathcal L$.
\end{theorem}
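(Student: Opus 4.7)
The plan is to reduce the whole theorem to a single consequence of geometric minimality: that in a geometrically minimal automaton the state $q(v)$ is a complete invariant of the isomorphism type of the unlabelled rooted subtree $T_v$ hanging below $v$. Intuitively, if two distinct states $q \neq q'$ produced isomorphic unlabelled descendant trees, one could identify them (after a suitable relabelling of outgoing edges) to obtain a DFA with strictly fewer vertices whose unlabelled path language tree is still $T$; this contradicts minimality. I would extract this statement from Proposition \ref{GeomMinIntro} (or from the machinery of minimal covering quotients developed in the referenced subsection). The upshot is: \emph{for every $g \in \Aut(T)$ and every $v \in \mathcal L$, one has $q(g(v)) = q(v)$}, because $g$ restricts to an unlabelled isomorphism $T_v \to T_{g(v)}$.

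With this in hand, the forward direction is immediate. Fix $g \in \Aut(T)$ and $\alpha \in \Sigma(v) = \Sigma(q(v))$ with $\delta(q(v),\alpha) = q'$. From $g(v\alpha) = g(v)\cdot \pre{v}\sigma(g)(\alpha)$ and the invariance of $q$ under $g$ we compute
\[
q' = q(v\alpha) = q(g(v\alpha)) = \delta\bigl(q(g(v)),\,\pre{v}\sigma(g)(\alpha)\bigr) = \delta\bigl(q(v),\,\pre{v}\sigma(g)(\alpha)\bigr),
\]
so $\pre{v}\sigma(g)(\alpha) \in \Sigma(q(v),q')$. Thus $\pre{v}\sigma(g)$ preserves each fiber $\Sigma(q(v),q')$, which is exactly the condition $\pre{v}\sigma(g) \in \Sym(q(v))$.

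For the reverse direction I construct $g$ recursively on the levels of $T$. Set $g(\epsilon) := \epsilon$ (so trivially $q(g(\epsilon)) = q(\epsilon)$). Assuming inductively that $g(v)$ has been defined and satisfies $q(g(v)) = q(v)$, define $g(v\alpha) := g(v)\cdot\pre{v}\sigma(\alpha)$ for each $\alpha \in \Sigma(v)$. Because $\pre{v}\sigma \in \Sym(q(v))$ preserves the fiber $\Sigma(q(v),q')$ containing $\alpha$, we have $\delta(q(g(v)),\pre{v}\sigma(\alpha)) = \delta(q(v),\alpha) = q(v\alpha)$, so $g(v\alpha)$ is an accepted word and the induction hypothesis propagates to the next level. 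Since each $\pre{v}\sigma$ is a bijection of $\Sigma(q(v))$, the map $g$ restricts to a bijection between the children of $v$ and those of $g(v)$; iterating, $g$ is a level-preserving bijection of $\mathcal L$ that sends edges to edges, hence a rooted tree automorphism whose portrait is the given family.

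The main obstacle is therefore not either direction in isolation but the auxiliary fact that states of a geometrically minimal automaton are characterized by the unlabelled isomorphism class of the subtree they generate. Establishing this rigorously is where geometric minimality must be used in a non-trivial way; once it is available, the theorem follows by the essentially formal level-by-level argument sketched above.
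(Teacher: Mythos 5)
Your proposal is correct, and the forward direction follows the paper's route closely: the auxiliary fact you isolate (that $q(g(v)) = q(v)$ for every automorphism $g$ and vertex $v$, because $g$ restricts to an unlabelled isomorphism $T_v \to T_{g(v)}$) is exactly Proposition \ref{S0} of the paper, whose proof is the one-liner you sketch. One small attribution note: what you actually need is not Proposition \ref{GeomMinIntro} but Proposition \ref{MNUniqueness} together with Construction \ref{MinQuot}, which together show that the vertex part $\pi_0$ of any covering onto the minimal quotient sends $v$ to its geometric Nerode class $[v]$; from this, $T_v \cong T_{g(v)}$ gives $q(v) = q(g(v))$ directly (equivalently, apply Proposition \ref{MNUniqueness} to the two coverings $\pi$ and $\pi\circ g$). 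The subsequent computation showing $\pre{v}\sigma(g)$ preserves the fibers $\Sigma(q(v),q')$ coincides with the paper's Admissibility Lemma.

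Your reverse direction is where you genuinely diverge from the paper. The paper (Theorems \ref{PortraitBijection} and \ref{PortraitBijection+}) constructs the target automorphism as a convergent infinite product $\prod_n g_n$ of elements in the rigid stabilizers $\Rist(n)$, each $g_n$ built from the basic automorphisms $g_{\sigma}^w$ of Construction \ref{con:basic.automorphisms}. You instead build the map $g$ directly level by level from the portrait, observing that the inductive invariant $q(g(v))=q(v)$ is preserved precisely because each $\pre{v}\sigma$ lies in $\Sym(q(v))$ and hence fixes the fibers of $\delta(q(v),\cdot)$; this makes the acceptance condition and the bijectivity on each level automatic, and the construction reduces to the elementary Lemma \ref{lem:induction.over.portrait} (which the paper proves in Section \ref{sec:describing.automorphisms.of.universal.covering.trees} but does not invoke here). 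Your route is more elementary and self-contained; the paper's heavier machinery buys the additional structural payoff it is after, namely explicit topological generators for $\Aut(T)$ and the description of $\Aut(T)$ as an inverse limit of iterated semidirect products of the $\Sym^{\mathcal A}(n)$. As a proof of Theorem \ref{IntroMain1} alone, your argument is complete and correct.
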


As an immediate application of Theorem \ref{IntroMain1} we see that the automorphism group of a self-similar tree is either finite or uncountable. To be more precise, let us refer to a pair $(e_1, e_2)$ of oriented edges in the underlying directed multigraph graph $A$ of $\mathcal A$ with the same source and target as a \emph{double edge}. We say that a double edge $(e_1, e_2)$ in $A$ is \emph{recurrent} if there is an oriented loop in $A$ that either contains $e_1$ or $e_2$, or from which the common source of $e_1$ and $e_2$ can be reached. We then have the following result:

\begin{corollary}[Size of the automorphism group] Let $T = T_{\mathcal A}$ be the path language tree of a geometrically minimal DFA $\mathcal A$. Then $\Aut(T)$ is infinite (equivalently, uncountable) if and only if the underlying directed multigraph of $\mathcal A$ contains a recurrent double edge.
\end{corollary}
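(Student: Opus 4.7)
The plan is to reduce the problem, via Theorem \ref{IntroMain1}, to a purely graph-theoretic question about directed paths in the finite graph $A$ underlying $\mathcal A$. By that theorem, $\Aut(T)$ is, as a set, the direct product $\prod_{v \in \mathcal L} \Sym(q(v))$, and the factor $\Sym(q) = \prod_{q'} \Sym(\Sigma(q,q'))$ is non-trivial exactly when $q$ is the common source of some double edge. The first step I would carry out is this reformulation and the observation that it immediately settles the infinite-versus-uncountable dichotomy: if infinitely many factors are non-trivial, the product has cardinality at least $2^{\aleph_0}$; otherwise the product is finite.

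Next I would prove the ``$\Leftarrow$'' direction. Given a recurrent double edge $(e_1, e_2)$ with common source $q$, I would unfold the three clauses of the recurrence definition and note that each provides a directed loop $\gamma$ in $A$ based at some state $r$ together with a (possibly trivial) directed path from $r$ to $q$. Invoking geometric minimality of $\mathcal A$, which forces every state of $\mathcal A$ to be accessible from $\epsilon$ (an inaccessible state could be removed to yield a strictly smaller DFA with the same path language tree), I would pick a path $\epsilon \to r$ and splice it with $k$ copies of $\gamma$ followed by the path $r \to q$. For varying $k \geq 0$ these concatenations have pairwise distinct lengths, hence encode infinitely many words $v_k \in \mathcal L$ all satisfying $q(v_k) = q$. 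Since $\Sym(q) \neq 1$, this produces infinitely many non-trivial factors in the product, so $\Aut(T)$ is infinite (hence uncountable).

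For the ``$\Rightarrow$'' direction I would argue by contraposition using a pigeonhole argument. Assuming no double edge in $A$ is recurrent, I want to show that for each state $q$ which is the source of some double edge the set $\{v \in \mathcal L : q(v) = q\}$ is finite; since $Q$ is finite and only these $q$ yield non-trivial factors, this forces the product to be finite. If some such set were infinite, the corresponding directed paths in $A$ from $\epsilon$ to $q$ would have unbounded length; any such path of length exceeding $|Q|$ must revisit some state $r$, yielding a directed loop $\gamma$ based at $r$ together with a continuation $r \to q$. But then $q$ is reachable from a loop, contradicting the assumed non-recurrence of every double edge with source $q$.

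The main obstacle I anticipate is not substantive but rather bookkeeping: one has to verify that the three ostensibly distinct clauses in the definition of recurrence collapse to the single geometric condition ``the common source $q$ is reachable from some directed loop in $A$,'' so that the pigeonhole step in the ``$\Rightarrow$'' direction is exactly the condition being contradicted. Once that translation is in place, both directions become short graph-theoretic arguments built on top of Theorem \ref{IntroMain1}.
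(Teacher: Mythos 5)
Your proposal is correct and follows essentially the same path as the paper's proof: reduce via the portrait bijection to counting how many vertices $v\in\mathcal L$ have $\Sym(q(v))\neq 1$ (the paper calls these ``permutable'' vertices), show that a recurrent double edge at $q$ yields infinitely many paths from the root to $q$ by splicing loops, and show conversely that without a recurrent double edge the number of such paths is finite by a pigeonhole argument. The only presentational differences are that the paper organizes the count by level ($\prod_{n}\Sym(n)$ rather than $\prod_{v}\Sym(q(v))$) and takes reachability of all states for granted since it works with connected graphs throughout, whereas you re-derive it from geometric minimality; your collapse of the three clauses of recurrence to ``$q$ is reachable from a loop'' is the same observation the paper uses implicitly.
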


\subsection{Structure of the automorphism group} \label{subsec:structure.of.automorphism.group}

We can now finally describe the structure of the automorphism group of a self-similar tree $T$. We may identify $T$ with the path language tree of a geometrically minimal DFA $\mathcal A$. Given $n \in \mathbb N_0$, we denote by $\mathcal L_n$ the words of length $n$ in the regular language $\mathcal L$ of $\mathcal A$ and define
\[
\Sym^{\mathcal A}(n) := \prod_{w \in \mathcal L_n} \Sym(q(w)).
\]
By definition, $\Sym^{\mathcal A}(n)$ is a finite product of finite symmetric groups, which depends only on $\mathcal A$. The group $\Sym^{\mathcal A}(n)$ acts on $\mathcal L_{n+1}$ by
\[
(\sigma_w)_{w \in \mathcal L_n} \cdot u\alpha = u\sigma_u(\alpha) \quad \text{for all }u \in \mathcal L_n \text{ and } \alpha \in \Sigma(u),
\]
and hence on $\Sym^{\mathcal A}(n+1)$ by permuting the indices.  It also acts on $\mathcal L_{n+k}$ for all $k \geq 2$ by acting only on the $(k+1)$th letter, and hence on $\Sym^{\mathcal A}({n+k})$. We can thus form the iterated semidirect products
\[
\Sym^{\mathcal A}_{\leq n} := \Sym^{\mathcal A}(0) \ltimes (\cdots(\Sym^{\mathcal A}(1) \ltimes(\dots \ltimes \Sym^{\mathcal A}(n))\cdots)),
\]
and we have canonical projections $p_n: \Sym^{\mathcal A}_{\leq n} \to \Sym^{\mathcal A}_{\leq (n-1)}$.

\begin{definition} The inverse limit of the inverse system
\[
\Sym^{\mathcal A}_{\leq 0} \;\xleftarrow{p_1} \;\Sym^{\mathcal A}_{\leq 1}\;\xleftarrow{p_2} \;\Sym^{\mathcal A}_{\leq 2}\;\xleftarrow{p_3} \;\Sym^{\mathcal A}_{\leq 3} \;\xleftarrow{p_4}\; \dots \]
is called the \emph{symmetric group} of the DFA $\mathcal A$ and denoted $\Sym(\mathcal A)$.
\end{definition}

Note that, since the underlying set of $\Sym^{\mathcal A}_{\leq 0}$ is $\prod_{|v|_{\Sigma} \leq n} \Sym(q(v))$, we can consider $\Sym(\mathcal A)$ as a subset of $\prod_{v \in \mathcal L}\Sym(q(v))$.

\begin{theorem}[Structure of the automorphism group] \label{thmintro:automorphisms.as.selfsimilar.system}
If $\mathcal A$ is a geometrically minimal DFA for a self-similar tree $T$, then the portrait map defines an isomorphism of topological groups
\[\sigma: \Aut(T) \to \Sym(\mathcal A), \quad g \mapsto (\pre{v}\sigma(g))_{v \in \mathcal L}.\]
\end{theorem}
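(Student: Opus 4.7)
The plan is to produce the isomorphism in three stages: first a set-theoretic bijection, then a verification that this bijection intertwines the group structures, and finally the comparison of topologies.

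For the first stage, note that the underlying set of $\Sym^{\mathcal A}_{\leq n}$ is $\prod_{|w|\leq n}\Sym(q(w))$, and the bonding map $p_n$ is simply the projection onto the first $n$ factors; passing to the inverse limit yields a canonical set-theoretic identification of $\Sym(\mathcal A)$ with $\prod_{v\in\mathcal L}\Sym(q(v))$. Under this identification, Theorem \ref{IntroMain1} says exactly that $\sigma$ maps $\Aut(T)$ bijectively onto $\Sym(\mathcal A)$: injectivity is clear because an automorphism is determined by its values on $\mathcal L$, and Theorem \ref{IntroMain1} supplies both the range condition $\pre{v}\sigma(g)\in\Sym(q(v))$ and a preimage for every compatible family.

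For the second stage, I would expand the defining identity of the portrait twice to obtain, for $g,h\in\Aut(T)$, $v\in\mathcal L$, and $\alpha\in\Sigma(v)$,
\[
(gh)(v\alpha)=g\bigl(h(v)\cdot\pre{v}\sigma(h)(\alpha)\bigr)=(gh)(v)\cdot\bigl(\pre{h(v)}\sigma(g)\circ\pre{v}\sigma(h)\bigr)(\alpha),
\]
so that $\pre{v}\sigma(gh)=\pre{h(v)}\sigma(g)\circ\pre{v}\sigma(h)$. I would then match this formula against the multiplication in $\Sym^{\mathcal A}_{\leq n}$ by induction on $n$: the action of $\Sym^{\mathcal A}(k)$ on $\mathcal L_{k+j}$, defined by permuting only the $(k+1)$st letter, induces an action on $\Sym^{\mathcal A}(k+j)$ by re-indexing which is precisely the twist that sends the $v$-component of the second factor to the $h(v)$-component after composition with the lower portraits of $h$. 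This identifies the portrait composition formula with the iterated semidirect product rule and shows that $\sigma$ is a group homomorphism.

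For the third stage, both groups are profinite. The group $\Aut(T)$ carries the topology of pointwise convergence, whose basis of neighborhoods of the identity is given by the pointwise stabilizers $\Stab(\mathcal L_n)$; local finiteness of $T$ makes each level $\mathcal L_n$ finite, so these are of finite index and $\Aut(T)$ is compact Hausdorff. The group $\Sym(\mathcal A)$ is compact Hausdorff in its canonical inverse-limit topology with neighborhood basis $\{\ker(\Sym(\mathcal A)\to\Sym^{\mathcal A}_{\leq n})\}$. Under the portrait map these two filtrations correspond level by level (up to a shift of one), so $\sigma$ is a continuous bijective homomorphism between compact Hausdorff groups, hence an isomorphism of topological groups. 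The main obstacle I expect is the bookkeeping in the second stage: matching $\pre{v}\sigma(gh)=\pre{h(v)}\sigma(g)\circ\pre{v}\sigma(h)$ with the iterated semidirect product multiplication requires unwinding definitions through several levels and carefully tracking how the re-labelling action at each level interacts with the lower-level portraits. The remaining checks are essentially formal.
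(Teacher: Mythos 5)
Your three-stage plan is sound and, at the level of strategy, is essentially the paper's proof: both proceed by establishing the set-theoretic bijection via the portrait characterization (Theorem~\ref{PortraitBijection}), then identifying the group structure with the iterated semidirect products, and finally upgrading to a topological isomorphism. The organizational difference is in stage two. You propose to verify directly by induction that the portrait composition law $\pre{v}\sigma(gh)=\pre{h(v)}\sigma(g)\circ\pre{v}\sigma(h)$ matches the semidirect product multiplication on $\Sym^{\mathcal A}_{\leq n}$; the paper instead routes this through the rigid stabilizer tower: Proposition~\ref{Semidirect1} shows $\Rist(n)\cong\Sym^{\mathcal A}(n)\ltimes\Rist(n+1)$, where the twisting action is realized as conjugation by the canonical lift $h(\tau)$ from Remark~\ref{rem:action.of.Sym.on.tree}, and the portrait formulas for the retraction $\ret_{n+1}$ and for elements of $\Rist(n)$ (Lemma~\ref{RistPortrait}, Corollary~\ref{PortraitInfProd}) do the bookkeeping. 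This is exactly the ``unwinding'' you flag as the main obstacle; the paper's conjugation formula $\sigma\odot g'=h(\sigma)g'h(\sigma)^{-1}$ packages it more cleanly than a bare index-chasing induction and is worth internalizing if you carry out the details. For the topology, your compact-Hausdorff bijective-homomorphism argument and the paper's appeal to the open mapping theorem are interchangeable. One small caution in your stage-two sketch: be explicit about the semidirect-product convention, since the factor $\pre{h(v)}\sigma(g)$ (indexed by $h(v)$, not $h^{-1}(v)$) forces the multiplication $(g,n)(g',n')=(gg',(g'^{-1}\cdot n)\,n')$ rather than the other standard choice; getting this wrong would make the ``matching'' fail by a sign in the re-indexing.
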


Theorem \ref{thmintro:automorphisms.as.selfsimilar.system} will be established in Section \ref{sec:automorphisms.of.selfsimilar.trees} below. In fact we will establish a more precise variant of the theorem, which also shows for example that the rigid stabilizer $\Rist(n)$ of the $n$th level of $T$ restricts to an isomorphism
\[
\Rist(n) \cong {\lim_{\leftarrow}}( \Sym^{\mathcal A}(n) \ltimes (\cdots(\Sym^{\mathcal A}(n+1) \ltimes(\dots \ltimes \Sym^{\mathcal A}(n+k))\cdots)).
\]

\subsection{Topological generators}

Theorem \ref{thmintro:automorphisms.as.selfsimilar.system} allows us to provide an explicit countable set of topological generators for the group $\Aut(T)$: In the situation of the theorem, the group $\Sym(q)$ acts by automorphisms on the underlying directed multigraph $A$ of $\mathcal A$ for every $q \in Q$, fixing all vertices and permuting only the edges emanating from $q$. Since $T$ is the universal covering tree of $T$, this automorphism lifts uniquely to an automorphism $g_{\sigma}$ of $T$. For every vertex $w$ of $T$ there is a canonical retraction $\ret_w: \Aut(T) \to \Rist(w)$ onto the rigid stabilizer of $w$, and we define
\[
g_{\sigma}^w := \mathrm{ret}_w(g_\sigma) \in \Rist(w) < \Aut(T).
\]
We refer to Constructions \ref{Defret} and \ref{con:basic.automorphisms} for detailed definitions of $\ret_w$ and $g_{\sigma}^w$.

\begin{corollary} The group $\Aut(T)$ is topologically generated by the automorphisms $g_{\sigma}^w$ with $w \in \mathcal L$ and $\sigma \in \Sym(q) \setminus\{\Id\}$ for some state $q$ of $\mathcal A[q(w)]$.
\end{corollary}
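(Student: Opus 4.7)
The plan is to invoke Theorem~\ref{thmintro:automorphisms.as.selfsimilar.system}, under which $\Aut(T)$ carries the inverse limit topology inherited from the quotients $\Sym^{\mathcal A}_{\leq n}$; via the isomorphism, this is the topology of pointwise convergence on $T$. Topological generation by the $g_\sigma^w$ therefore amounts to showing that for every $g\in\Aut(T)$ and every $n\in\bN_0$ there is a finite product $P_n$ of the proposed generators such that $P_n(v)=g(v)$ for all $v$ with $|v|\leq n$.

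I would prove this by induction on $n$, cancelling the portrait of $g$ one level at a time. Set $g_0:=g$ and assume inductively that $g_n\in\Aut(T)$ differs from $g$ by a finite product of generators and has trivial portrait at every $v$ with $|v|<n$; then $g_n$ fixes every vertex of level $\leq n$ and lies in $\Rist(n)=\prod_{v\in\mathcal L_n}\Rist(v)$. For each $v\in\mathcal L_n$ set $\sigma_v:=\pre{v}\sigma(g_n)\in\Sym(q(v))$---an admissible label since $q(v)$ is trivially a state of $\mathcal A[q(v)]$---and form
\[
h_n:=\prod_{v\in\mathcal L_n} g_{\sigma_v}^{\,v},
\]
which is a finite product of generators, the factors commuting because their supports lie in pairwise disjoint subtrees $T_v$. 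Put $g_{n+1}:=h_n^{-1}g_n$, so that $g_{n+1}$ again differs from $g$ by a finite product of generators; the inductive step reduces to checking that $g_{n+1}$ has trivial portrait at every $v$ with $|v|\leq n$.

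For that verification, the case $|v|<n$ is automatic since both $g_n$ and $h_n$ have trivial portrait there. For $v\in\mathcal L_n$, the only factor of $h_n$ that affects the edges out of $v$ is $g_{\sigma_v}^{\,v}$, whose portrait at $v$ equals $\sigma_v$ precisely because the defining state $q$ of that generator has been chosen to be $q(v)$ itself, so the lifted graph automorphism acts by $\sigma_v$ at $v$. Since $g_n$ and $h_n$ both fix $v$, the composition rule $\pre{v}\sigma(gh)=\pre{v}\sigma(g)\circ\pre{v}\sigma(h)$ (immediate from the definition of portraits when $g,h\in\Stab(v)$) gives $\pre{v}\sigma(h_n^{-1}g_n)=\sigma_v^{-1}\circ\sigma_v=\Id$, as required. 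Iterating yields $g=h_0h_1\cdots h_n\,g_{n+1}$ with $g_{n+1}$ fixing every vertex of level $\leq n$, so $h_0\cdots h_n\to g$ in $\Aut(T)$.

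The only delicate point is verifying that this componentwise portrait arithmetic on $\Rist(n)$ is correct; once one observes that an element of $\Rist(n)$ is completely determined on level $n+1$ by its tuple of portraits $(\pre{v}\sigma)_{v\in\mathcal L_n}\in\Sym^{\mathcal A}(n)$ and that these tuples multiply componentwise under composition, everything else reduces to solving, at each $v\in\mathcal L_n$, a single equation inside the finite group $\Sym(q(v))$, which is immediate.
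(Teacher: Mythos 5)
Your proof is correct and essentially reproduces the paper's argument: the paper proves Lemma~\ref{gammaSection} (surjectivity of $\sigma_n$ restricted to $\Rist(n)\cap G$, via the same construction $\prod_{v\in V(T)_n} g_{\sigma_v}^v$) and then Theorem~\ref{PortraitBijection+}, building level by level and passing to the limit along the rigid-stabilizer filtration exactly as you do. The only presentational difference is that you cancel the portrait of a given $g$ directly to produce a convergent sequence $h_0\cdots h_n\to g$, whereas the paper realizes an arbitrary admissible portrait inside the closed generated subgroup and then invokes injectivity of the portrait map on $\Aut(T)$ to conclude.
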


\subsection{Action of $\Aut(T)$ on regular languages}
Now let $\mathcal A$ be an arbitrary (i.e. not necessarily geometrically minimal) DFA with regular language $\mathcal L$. If $T$ denotes the regular language tree of $\mathcal L$, then $\Aut(T)$ acts on $T$ and hence on $\mathcal L$. 

The group $\Aut(T)$ can be expressed in terms of portraits with respect to a geometrically minimal DFA $\overline{\mathcal A}$ whose regular language coincides with $\mathcal L$. It is always possible to choose $\overline{\mathcal A}$ as a geometric covering quotient of $\mathcal A$ in a suitable sense and thereby make the action of $\Aut(T)$ on $\mathcal L$ explicit. This is illustrated in a concrete example in Section \ref{sec:switching.between.minimal.and.nonminimal} below.

\subsection{Structure of this article} This article is organized as follows. Section~\ref{SecCovering} fixes our notation concerning graphs and automata and recalls basic facts about covering of rooted, directed  multigraphs. Section \ref{sec:describing.automorphisms.of.universal.covering.trees} introduces portraits of automorphisms of path language trees. Section \ref{SecMNT} discusses properties and constructions of geometrically minimal DFAs in close analogy to the labelled case (a.k.a.\ Myhill-Nerode theory). The main new result is Proposition \ref{S0} which ensures that minimal graphs (i.e.\ the underlying graphs of geometrically minimal DFAs) are compatible with automorphisms of their universal covering trees. This is then applied in 
Section \ref{sec:automorphisms.of.selfsimilar.trees} to determine the automorphism groups of self-similar trees. In Section \ref{sec:switching.between.minimal.and.nonminimal} we show how to switch back and forth between the descriptions of automorphisms obtained in Sections \ref{sec:describing.automorphisms.of.universal.covering.trees} and \ref{sec:automorphisms.of.selfsimilar.trees} and illustrate this on a concrete example.

\textbf{Acknowledgment.} We thank Laurent Bartholdi, Collin Bleak, and Tatiana Smirnova-Nagnibeda for their comments and helpful discussions. The second author was partially supported by the FWF grant 10.55776/ESP124.

\section{Covering theory of graphs and automata}\label{SecCovering}

For lack of a convenient reference we recall the basic covering theory of graphs and automata. Our graphs will be rooted, directed, oriented multigraphs, and we will develop a theory of rooted, directed, oriented coverings for such graphs. Our automata will be deterministic finite automata (with a single initial state and all states accepting) which we consider as finite labelled graphs, and their covering theory will be a labelled version of our covering theory for graphs.

\subsection{Graphs and automata}
Let us start by specifying the class of graphs which will be considered in this article:
\begin{notation}[Graphs]
Throughout this article, by a \emph{graph} we shall mean a rooted, directed multigraph. Thus, by definition, a graph $A = (Q, \cE, s,t, \epsilon)$ consists of a non-empty set $Q$ of \emph{vertices}, containing a distinguished vertex $\epsilon$, called the \emph{root}, and a set $\cE$ of \emph{oriented edges} together with functions $s,t: \cE \to Q$ called the \emph{source} and \emph{target} function. We use the following notations concerning graphs:
\begin{itemize}
\item For every vertex $v \in Q$ we denote the sets of \emph{outgoing/incoming edges} by  $v^+:= s^{-1}(\{v\})$ and $v^- := t^{-1}(\{v\})$ respectively. We write $\mathcal C(v) := t(v^+)$ for the set of \emph{children} of $v$.
\item For $n \in \bN$ we denote by
\[
\cE^{(n)} := \{(e_1, \dots, e_n) \in \cE^n \mid t(e_1) = s(e_2), \dots, t(e_{n-1}) = s(e_n) \}
\]
the sets of \emph{paths} of length $n$. We also define $\mathcal{E}^{(0)} = Q$ (i.e.\ paths of lenghts $0$ are vertices) and denote by $\cE^* := \bigcup_{n \in \bN_0} \cE^{(n)}$ the collection of all (finite) paths. 
\item Given $\gamma = (e_1, \dots, e_n) \in \cE^*$ of length $\geq 1$ we set $s(\gamma) = s(e_1)$ and $t(\gamma) = t(e_n)$. If $q\in \cE^{(0)} = Q$ we set $s(q) := t(q) := q$.
\item Given $v \in Q$ and $n \in \mathbb N_0$ we write $\cE^{(n)}_v := \{\gamma \in \cE^{(n)} \mid s(\gamma) = v\}$ and denote by $\cE^*_v := \bigcup_{n \in \bN_0} \cE^{(n)}_v$ the set of paths emanating from $v$. We then write $A_v$ for the induced subgraph of $A$ with vertex set $t(\cE^*_v)$ and root $v$. The isomorphism classes of these graphs are called the \emph{cone types} of $A$.
\end{itemize}
\end{notation}
From now on let  $A =(Q, \cE, s,t, \epsilon)$ be a graph.
\begin{definition}
We say that $A$ is \emph{connected} (as a rooted, directed graph) if the map $t: \cE_\epsilon^* \to Q$ is surjective, and a \emph{tree}\footnote{Thus a tree in our sense is what is often called an oriented rooted tree in the literature. Note that all edges are oriented away from the root.} if it is even bijective. 
\end{definition}
\begin{remark}[Trees]
If $A$ is a tree, then for all $\gamma_1, \gamma_2 \in \cE^*$ we have 
\[
s(\gamma_1) = s(\gamma_2) \; \wedge \; t(\gamma_1) = t(\gamma_2) \implies \gamma_1 = \gamma_2.
\]
Moreover, for every $v \in Q$ the graph $A_v$ is also a tree, namely the rooted subtree of $A$ spanned by $v$ and all its descendants. 
\end{remark}
Next we specify the class of automata which we will consider:

\begin{definition} Let $\Sigma$ be a finite non-empty set. A \emph{deterministic finite automaton\footnote{There are other possible definitions of a deterministic finite automaton which allow for more than one initial state, a distinguished subset of acceptance states or demand the transition function to be a total function. We will not consider such automata here, but we remark that they define the same class of regular languages as ours.} (DFA)} over $\Sigma$ is a triple $\mathcal{A} = (Q, \delta, \epsilon)$ consisting of a finite non-empty set $Q$ (whose elements are called \emph{states}), a distinguished element $\epsilon \in Q$ (the \emph{initial state}) and a partial function $\delta : Q \times \Sigma \rightarrow Q$ (the \emph{transition function}).

The \emph{associated regular language}  $\mathcal L_{\mathcal A} \subset \Sigma^*$ consists of all  words $w = \alpha_1 \cdots \alpha_n \in \Sigma^*$ for which there exists a sequence of states $s_0, \dots s_n \in Q$, such that  $s_0 = \epsilon$ and $s_{i+1} = \delta(s_i, \alpha_{i+1})$ for all $i \in \{ 0, \dots, n-1 \}$.
\end{definition}
To connect graphs to automata we introduce the following notion:
\begin{definition} Let $\Sigma$ be a non-empty set. A function $\ell: \cE \to \Sigma$ is called an \emph{edge labelling} of $A$ over $\Sigma$. An edge labelling $\ell$ is called \emph{admissible} if $\ell|_{q^+}$ is injective for every vertex $q \in Q$. In this case, $(A, \ell)$ is called a \emph{labelled graph}. 
\end{definition}
\begin{example}[Canonical labelling] Every graph $A$ admits an admissible edge labelling; for example, we can always choose $\Sigma := \cE$ and $\ell$ as the identity. This is called the \emph{canonical labelling} of $A$.
\end{example}
\begin{example}[Automata as labelled graphs]\label{AutoGraph} A DFA is essentially the same as a finite connected labelled graph. More precisely, if $\mathcal A = (Q, \delta, \epsilon)$ is a finite automaton over $\Sigma$, then we can define a finite labelled graph $(A, \ell)$ as follows: The vertex set of $A$ is $Q$ and for every $\sigma \in \Sigma$, there is a directed edge from $q$ to $q'$ with label $\sigma \in \Sigma$, if and only if $\delta(q, \sigma) = q'$. We refer to $A$ as the \emph{underlying graph} of $\mathcal A$ and to $\ell: A \to \Sigma$ as the \emph{induced labelling}. Note that $\mathcal A$ is uniquely determined by the labelled graph $(A, \ell)$ and that every finite labelled graph arises from a finite automaton in this way. 
\end{example}
\begin{definition} Given a DFA $\mathcal{A} = (Q, \delta, \epsilon)$ over $\Sigma$ we say that a state $q' \in Q$ is \emph{reachable} from a state $q \in Q$ if there exist $\alpha_1, \dots, \alpha_n \in \Sigma$ such that
\begin{equation}\label{Reach}
\delta(\dots (\delta(\delta(q, \alpha_1), \alpha_2), \dots),\alpha_n) = q'.
\end{equation}
We say that $\mathcal A$ is \emph{reduced} if every state is reachable from the root.
\end{definition}
Note that, under the correspondence from Example \ref{AutoGraph}, reduced automata correspond to connected graphs.
\begin{con}[Reachable closure]
Let $\mathcal{A} = (Q, \delta, \epsilon)$ be a DFA over $\Sigma$ and let $q \in Q$ be a state. If we denote by $Q'$ the set of all states which are reachable from $q$ and by $\delta'$ the restriction of $\delta$ to $Q' \times \Sigma$, then $\mathcal A[q] := (Q', \delta', q)$ is a reduced DFA with root $q$, called the \emph{reachable closure} of $q$. 
\end{con}
Note that $\mathcal A[\epsilon]$ is a reduced DFA with the same regular language as $\mathcal A$. Thus for the study of regular languages we may restrict attention to reduced DFAs or, equivalently, finite connected labelled graphs.

\begin{remark}[Path labelling]  If $A =(Q, \cE, s,t, \epsilon)$ is an arbitrary graph, then every admissible labelling $\ell: \cE \to \Sigma$ extends to a function (denoted by the same letter)
\[
\ell: \cE^* \to \Sigma^*, \quad \ell((e_1, \dots, e_n)) := \ell(e_1) \cdots \ell(e_n).
\]
Here, $\Sigma^*$ denotes the free monoid over $\Sigma$ and the product on the right is given by concatenation. By convention, the empty path is labelled by the empty word $e$. We then obtain injective maps
\begin{equation}\label{Paths}
\cE \to V \times \Sigma \times V, \quad \cE^{(n)} \to V \times \Sigma^n \times V \qand \cE^* \to V \times \Sigma^* \times V
\end{equation}
by assigning with each edge or path its initial vertex, collection of labels and final vertex. Moreover, the extended labelling $\ell$ restricts to an injection 
\begin{equation}\label{Lang1}
\cE^*_q \to \Sigma^*
\end{equation}
for every $q \in Q$. We will often identify $\cE$, $\cE^{(n)}$, $\cE^*$ and $\cE^*_q$ with their respective images under these embeddings; in particular, $(q_1, \alpha, q_2)$ denotes the unique edge from $q_1$ to $q_2$ with label $\alpha$ (if such an edge exists). \end{remark}

\subsection{Coverings of graphs}
Throughout this subsection $A =(Q, \cE, s,t, \epsilon)$ and $A'=(Q', \cE', s',t', \epsilon')$ are graphs.
\begin{definition} A \emph{morphism} $p: A \to A'$ is given by a pair of maps $p_0: Q \to Q'$ and $p_1: \cE \to \cE'$ with $p_0(\epsilon) = \epsilon'$.
such that $s' \circ p_1 = p_0 \circ s$ and $t' \circ p_1 = p_0 \circ t$. 
\end{definition}
Note that, by definition, all our morphisms are assumed to be rooted and orientation preserving. Graphs and their morphisms form a category, and given a graph $A$ we denote by $\Aut(A)$ the automorphism group of $A$.

\begin{notation}
If $p$ is a morphism, then $p_1^n = p_1 \times \dots \times p_1$ restricts to maps
\[
p^{(n)}: \cE^{(n)} \to \cE^{(n)} \qand p^{(n)}_q: \cE^{(n)}_q \to \cE^{(n)}_{p(q)} \quad \text{for all $q\in Q$.}
\]
If $\gamma, \gamma' \in \cE^{(n)}$ satisfy $p^{(n)}(\gamma) = \gamma'$, then $\gamma$ is called a \emph{lift} of $\gamma'$. 
\end{notation}
\begin{proposition}\label{Covering1} For a morphism $p: A \to A'$ the following are equivalent:
\begin{enumerate}[(i)]
\item For every vertex $q\in Q$ the map $p_1$ restricts to a bijection $q^+ \to p_0(q)^+$.
\item $p$ has the \emph{unique path lifting property (UPLP)}, i.e.\ for every $q\in Q$ and every path $\gamma' \in \cE^*_{p(q)}$ there exists a unique lift $\gamma$ of $\gamma'$ with $s(\gamma) = q$.
\end{enumerate}
\end{proposition}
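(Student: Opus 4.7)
The plan is to establish the equivalence by handling each direction separately, with the forward direction requiring a straightforward induction on path length and the reverse direction amounting to specializing UPLP to paths of length one.

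For the implication (i) $\Rightarrow$ (ii), I would proceed by induction on $n$, the length of the path $\gamma' \in \cE^{(n)}_{p_0(q)}$ to be lifted. The base case $n = 0$ is trivial, since the unique length-$0$ lift starting at $q$ is $q$ itself. For $n = 1$, an edge $e' \in p_0(q)^+$ has a unique lift in $q^+$ by (i), which provides both existence and uniqueness. For the inductive step with $\gamma' = (e_1', \dots, e_n')$, I would first apply (i) to obtain the unique lift $e_1 \in q^+$ of $e_1'$. Setting $q_1 := t(e_1)$, the morphism condition $t' \circ p_1 = p_0 \circ t$ gives $p_0(q_1) = t'(e_1') = s'(e_2')$, so the tail $(e_2', \dots, e_n')$ lies in $\cE^{(n-1)}_{p_0(q_1)}$ and can be lifted by the inductive hypothesis to a unique path starting at $q_1$. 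Concatenation with $e_1$ yields a lift of $\gamma'$ starting at $q$, and uniqueness is preserved because any lift must begin with the uniquely determined $e_1$ and continue with the unique lift of the tail.

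For (ii) $\Rightarrow$ (i), I would simply restrict UPLP to paths of length one. Fix $q \in Q$. For surjectivity of $p_1|_{q^+}: q^+ \to p_0(q)^+$, take any $e' \in p_0(q)^+$, i.e.\ $s'(e') = p_0(q)$; then UPLP produces a lift $e \in \cE$ with $s(e) = q$ and $p_1(e) = e'$, so $e \in q^+$ maps to $e'$. For injectivity, if $e_1, e_2 \in q^+$ satisfy $p_1(e_1) = p_1(e_2) = e'$, then both are lifts of $e'$ starting at $q$, so the uniqueness clause of UPLP forces $e_1 = e_2$.

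The main (minor) obstacle I foresee is keeping the bookkeeping between $p_0$, $p_1$, and their action on paths $p^{(n)}$ honest, in particular verifying that intermediate vertices along a partially constructed lift really sit over the correct source vertices in $A'$; but this is a direct consequence of the defining identities $s' \circ p_1 = p_0 \circ s$ and $t' \circ p_1 = p_0 \circ t$. No deeper argument is needed, since the statement is essentially a reformulation of the definition of a covering in terms of edge-bijections versus path-lifting.
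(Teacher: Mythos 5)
Your proof is correct and follows essentially the same route as the paper: the paper dispatches (ii) $\Rightarrow$ (i) by noting that (i) is just UPLP for paths of length one, and establishes (i) $\Rightarrow$ (ii) by inductively lifting the path one edge at a time. You have simply written out the induction and the bookkeeping in more detail.
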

\begin{proof}  (i) is just the UPLP for paths of length 1, hence (ii)$\Rightarrow$(i). If $\gamma' = (e_1', \dots, e_n')$ is as in (ii), then by (i) the edge $e_1'$ admits a unique lift $e_1$ with $s(e_1) = q$. Then $e_2'$ has a unique lift $e_2$ with $s(e_2) = t(e_1)$ and so on. \end{proof}
\begin{definition} A morphism $p: A \to A'$ is called a \emph{covering morphism} if $p_0$ is surjective and $p$ has the UPLP. In this case we also say that $A$ \emph{covers} $A'$ and that $A'$ is a \emph{covering quotient} of $A$.
\end{definition}
From Proposition \ref{Covering1} one deduces:
\begin{corollary}\label{CoveringCrit} If $A'$ is connected, then a morphism $p: A\to A'$ is a covering morphism if and only if $p_1$ restricts to a bijection $q^+ \to p_0(q)^+$ for every $q \in Q$.
\end{corollary}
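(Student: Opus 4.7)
The plan is to reduce the statement to Proposition \ref{Covering1} and use the connectedness hypothesis of $A'$ to upgrade the path lifting property into surjectivity of the vertex map.

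First I would dispatch the forward direction, which does not use connectedness of $A'$: if $p: A \to A'$ is a covering morphism, then by definition it has the UPLP, and by Proposition \ref{Covering1}(ii)$\Rightarrow$(i) this forces $p_1$ to restrict to a bijection $q^+ \to p_0(q)^+$ for every $q \in Q$.

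For the reverse direction, I would start from the assumption that $p_1$ restricts to a bijection $q^+ \to p_0(q)^+$ for every $q \in Q$ and apply Proposition \ref{Covering1}(i)$\Rightarrow$(ii) to conclude that $p$ satisfies the UPLP. It then remains to show that $p_0: Q \to Q'$ is surjective; this is the only place where connectedness of $A'$ enters. Given any $q' \in Q'$, connectedness of $A'$ provides a path $\gamma' \in \cE'^*_{\epsilon'}$ with $t(\gamma') = q'$. Since $p_0(\epsilon) = \epsilon'$ by the definition of a morphism, the UPLP yields a unique lift $\gamma \in \cE^*_\epsilon$ of $\gamma'$. The endpoint $q := t(\gamma) \in Q$ then satisfies $p_0(q) = t'(p_1(\gamma)) = t'(\gamma') = q'$, so $q'$ lies in the image of $p_0$.

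The two directions together give the claimed equivalence. There is no real obstacle here beyond being careful about the edge cases: the vertex $q'$ could equal $\epsilon'$, in which case we take $\gamma'$ to be the empty path of length zero (an element of $\cE'^{(0)} = Q'$), and the UPLP still produces the required trivial lift. The essential content of the corollary is simply that, in the presence of connectedness of $A'$, the surjectivity clause in the definition of a covering morphism becomes redundant, since UPLP alone forces every vertex of $A'$ to be hit by propagating lifts of paths starting from $\epsilon$.
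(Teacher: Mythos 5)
Your proof is correct and follows the same route as the paper: both directions reduce to Proposition \ref{Covering1}, and connectedness of $A'$ is used only to establish surjectivity of $p_0$ by lifting a path from the root $\epsilon'$ to an arbitrary vertex $q'$ and taking the endpoint of the lift. The paper's version is simply terser (it does not spell out the forward direction or the trivial length-zero case, which are routine).
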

\begin{proof} In view of Proposition \ref{Covering1} only surjectivity of $p_0$ remains to show. Since $A'$ is connected, we can reach every vertex $q'$ in $A'$ by a path, and the endpoint of a lift of this path is a pre-image of $q'$ under $p_0$.
\end{proof}
We will also need the following fact:
\begin{lemma}\label{InjectiveCovering} If $p: A \to A'$ is a covering morphism of connected graphs and $p_0$ is injective, then $p$ is an isomorphism.
\end{lemma}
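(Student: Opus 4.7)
The plan is to verify that $p_0$ and $p_1$ are set-theoretic bijections and then invoke the fact that a bijective morphism of graphs is automatically an isomorphism. That $p_0 : Q \to Q'$ is a bijection is immediate: injectivity is the hypothesis, and surjectivity is part of the definition of a covering morphism.

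To handle $p_1$, I would use the source-decompositions $\cE = \bigsqcup_{q \in Q} q^+$ and $\cE' = \bigsqcup_{q' \in Q'} (q')^+$, which are disjoint because every edge has a single source vertex. By Proposition \ref{Covering1}, the UPLP (applied to paths of length one) tells us that $p_1$ restricts to a bijection $q^+ \to p_0(q)^+$ for every $q \in Q$. Since $p_0$ is already known to be a bijection $Q \to Q'$, the map
\[
p_1 = \bigsqcup_{q \in Q} p_1|_{q^+} \;:\; \bigsqcup_{q \in Q} q^+ \;\longrightarrow\; \bigsqcup_{q \in Q} p_0(q)^+ \;=\; \bigsqcup_{q' \in Q'}(q')^+
\]
is a disjoint union of bijections onto a reindexing of the full edge set $\cE'$, hence itself a bijection.

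It then remains to show that the pair of inverse maps $(p_0^{-1}, p_1^{-1})$ is itself a graph morphism. This is a formal check: the identities $s' \circ p_1 = p_0 \circ s$ and $t' \circ p_1 = p_0 \circ t$ can be rewritten as $s \circ p_1^{-1} = p_0^{-1} \circ s'$ and $t \circ p_1^{-1} = p_0^{-1} \circ t'$ by composing on both sides with the appropriate inverses, and the root condition $p_0(\epsilon) = \epsilon'$ gives $p_0^{-1}(\epsilon') = \epsilon$. Thus $p$ is an isomorphism of graphs.

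I do not anticipate a genuine obstacle here; the statement unwinds cleanly from the definitions. It is worth noting that the connectedness hypotheses on $A$ and $A'$ are not actually used in this argument, since we only invoke the UPLP on length-one paths rather than Corollary \ref{CoveringCrit}; they are presumably included for uniformity with the surrounding results.
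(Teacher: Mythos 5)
Your proof is correct and takes essentially the same approach as the paper: both arguments reduce to the fact that $p_0$ is a bijection together with the local bijections $q^+ \to p_0(q)^+$ coming from the UPLP. Your disjoint-union packaging, your explicit check that $(p_0^{-1},p_1^{-1})$ is a morphism, and your observation that connectedness is not actually needed are all sound, though the paper simply argues surjectivity and injectivity of $p_1$ separately.
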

\begin{proof} $p_0$ is bijective (by assumption) and  $p_1: \cE \to \cE'$ is surjective (by the UPLP). Concerning injectivity of $p_1$, assume that $p_1(e_1) = p_1(e_2) = e'$ for some $e_1, e_2 \in \cE$. Then $p_0(s(e_1)) = p_0(s(e_2))$ and hence $s(e_1) = s(e_2)$ by injectivity of $p_0$. Since $p_1$ is injective on $s(e_1)^+$ we find that $e_1 = e_2$. 
\end{proof}
\begin{corollary}\label{TreeCovering} If $A$ is connected and $A'$ is a tree, then any covering morphism $p: A \to A'$ is an isomorphism.
\end{corollary}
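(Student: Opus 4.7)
The plan is to reduce the statement to the previous lemma (Lemma \ref{InjectiveCovering}), which says that a covering morphism between connected graphs with injective vertex map is an isomorphism. So it suffices to show that under the hypotheses of the corollary, $p_0 : Q \to Q'$ is automatically injective. This is where the tree assumption on $A'$ will be used, via the uniqueness-of-paths remark in the tree section.

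Concretely, I would argue as follows. Suppose $q_1, q_2 \in Q$ satisfy $p_0(q_1) = p_0(q_2) =: q'$. Since $A$ is connected, the map $t : \cE_\epsilon^* \to Q$ is surjective, so we can pick paths $\gamma_1, \gamma_2 \in \cE_\epsilon^*$ with $t(\gamma_i) = q_i$. Applying the morphism $p$ componentwise we obtain paths $\gamma_1' := p^*(\gamma_1)$ and $\gamma_2' := p^*(\gamma_2)$ in $A'$ with common source $p_0(\epsilon) = \epsilon'$ and common target $p_0(q_i) = q'$. Since $A'$ is a tree, the remark following the definition of a tree shows $\gamma_1' = \gamma_2'$.

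Now I invoke the UPLP for the covering morphism $p$: both $\gamma_1$ and $\gamma_2$ are lifts of the same path $\gamma_1' = \gamma_2'$ starting at the vertex $\epsilon$, so by uniqueness of such lifts (Proposition \ref{Covering1}(ii)) we get $\gamma_1 = \gamma_2$, and in particular $q_1 = t(\gamma_1) = t(\gamma_2) = q_2$. Thus $p_0$ is injective, and Lemma \ref{InjectiveCovering} finishes the proof.

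There is no real obstacle; the only thing to be slightly careful about is that the UPLP is stated for paths lifting to a prescribed source, so we need both $\gamma_1$ and $\gamma_2$ to start at the same vertex, which is why we use $\epsilon$ (and the morphism condition $p_0(\epsilon) = \epsilon'$) rather than trying to connect $q_1$ directly to $q_2$. The argument is the standard ``monodromy is trivial when the base is simply connected'' principle adapted to the present rooted, directed setting.
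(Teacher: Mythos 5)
Your proposal is correct and follows essentially the same route as the paper: reduce to Lemma~\ref{InjectiveCovering} by showing $p_0$ is injective, and obtain injectivity by projecting two paths from the root, using uniqueness of paths in a tree, and then applying the UPLP. No further comment is needed.
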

\begin{proof} By Lemma \ref{InjectiveCovering} it suffices to show that $p_0$ is injective. Thus let $q_1, q_2 \in Q$ with $q' := p_0(q_1) = p_0(q_2)$. Since $A$ is connected there exists paths $\gamma_j$ from the root to $q_j$ for $j \in \{1,2\}$. Both paths project to paths from the root to $q'$, and since $A'$ is a tree there is only one such path $\gamma'$. By the UPLP we find $\gamma_1 = \gamma_2$ and hence $q_1 = q_2$.
\end{proof}
\begin{con}[Universal covering tree] Given a connected graph $A$ as above we define a tree $T_A$ and a covering morphism $\pi: T_A \to A$ as follows.
\begin{itemize}
    \item The vertex set of $T_A$ is $V(T_A):= \cE^*_\epsilon$.
    \item The edge set $E(T_A) \subset V(T_A)^2$ consists of all pairs of the form $((e_1, \dots, e_n), (e_1, \dots, e_{n+1}))$.
    \item The source and target maps are given by the canonical projections, and the root of $T_A$ is given by the empty word.
    \item The morphism $\pi$ is given by $\pi_0 = t$ and
\[
\pi_1((e_1, \dots, e_n), (e_1, \dots, e_{n+1})) = e_{n+1}.
\]
It follows from Corollary \ref{CoveringCrit} that $\pi$ is indeed a covering morphism.
\end{itemize}
 We refer to $T_A$ as the \emph{universal covering tree} of $A$ and to $\pi: T_A \to A$ as the \emph{universal covering morphism}.
\end{con}
\begin{remark}[Universal property]\label{UP} Assume that $A$ and $A'$ are connected.
\begin{enumerate}[(i)]
\item If $A$ is a tree, then $\pi: T_A \to A$ is an isomorphism by Corollary \ref{TreeCovering}.
\item If $p: A \to A'$ is a morphism, then $p$ maps paths from the root in $A$ to paths from the root in $A'$, hence induces a morphism $T_p: T_A\to T_{A'}$. 
\item If $p: A\to A'$ is a covering morphism, then by Corollary \ref{CoveringCrit} also $T_p$ is a covering morphism, hence an isomorphism by (i). In particular, $T_A \cong T_{A'}$.
\item If $T$ is a tree and $p: T \to A$ is a covering morphism, then $T_p: T_T \to T_A$ is an isomorphism by (iii). It then follows with (i) that there is an isomorphism $T \cong T_A$ which intertwines $p$ with the universal covering morphism. In particular, $T_A$ is the unique tree (up to isomorphism) which covers $A$.
\end{enumerate}
As a consequence, if $\mathcal C_A$ denotes the category whose objects are pairs $(B,p)$ such that $B$ is a connected graph which covers $A$ and $p:B \to A$ is a covering morphism, and such that morphisms $(C,p') \to (B,p)$ in $\mathcal C_A$ are given by covering morphisms $q: C \to B$ such that $p' = p \circ q$, then $(T_A, \pi)$ is an initial object in $\mathcal C_A$ and thus unique up to unique isomorphism in $\mathcal C_A$.
\end{remark}
\begin{con}[Universal covering tree of a subgraph] Let $A$ be a connected graph and $T=T_A$ with universal covering map $\pi: T \to A$. Let $\gamma$ be a vertex of $T$ and $q := \pi_0(\gamma)$. Then $\pi$ restricts to a morphism $\pi': T_\gamma \to A_q$ which is a covering morphism by Corollary \ref{CoveringCrit}. It is thus the universal covering morphism of $A_q$ and hence $T_{A_q} \cong T_\gamma$ is isomorphic to a subtree of $T_A$.
\end{con}
We record the following consequence of the universal property:
\begin{corollary}\label{LiftingAut} Let $A$ be connected, set $T= T_A$ and denote by $\pi:T \to A$ the universal covering morphism. Choose vertices $\gamma, \gamma'$ of $T$ and set $q := \pi_0(\gamma)$ and $q' := \pi_0(\gamma')$
Then for every rooted isomorphism $g_0: A_{q} \to A_{q'}$ there exists a unique rooted isomorphism $g: T_\gamma \to T_{\gamma'}$ such that the following diagram commutes:
\[\begin{xy}\xymatrix{
T_\gamma \ar[r]^g \ar[d]_{\pi}& T_{\gamma'} \ar[d]^{\pi}\\
A_q \ar[r]^{g_0} & A_q'.
}\end{xy}\]
\end{corollary}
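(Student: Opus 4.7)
The plan is to invoke the universal property of the universal covering tree stated in Remark \ref{UP}, applied to the subgraph $A_{q'}$.

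First, by the Construction of the universal covering tree of a subgraph (immediately preceding the corollary), the restrictions $\pi_\gamma := \pi|_{T_\gamma}: T_\gamma \to A_q$ and $\pi_{\gamma'} := \pi|_{T_{\gamma'}}: T_{\gamma'} \to A_{q'}$ are rooted covering morphisms, and they realise $T_\gamma$ and $T_{\gamma'}$ as universal covering trees of $A_q$ and $A_{q'}$ respectively.

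The key step is to form the composition $p := g_0 \circ \pi_\gamma: T_\gamma \to A_{q'}$ and to verify that it is again a rooted covering morphism. Rootedness is clear since $\pi_\gamma$ sends the root $\gamma$ to $q$ and $g_0$ sends $q$ to $q'$. For the covering property, note that for every vertex $v \in V(T_\gamma)$ the restriction of $p_1$ to $v^+$ is the composition of the bijection $(\pi_\gamma)_1|_{v^+}: v^+ \to (\pi_0(v))^+$ (from Corollary \ref{CoveringCrit}) with the bijection on outgoing edges induced by the isomorphism $g_0$, and hence is itself a bijection onto $(p_0(v))^+$. Since $A_{q'}$ is connected, Corollary \ref{CoveringCrit} applies and shows that $p$ is a covering morphism. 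Now Remark \ref{UP}(iv), applied to the tree $T_\gamma$ and covering morphism $p$, furnishes a rooted isomorphism $g: T_\gamma \to T_{\gamma'}$ intertwining $p$ with the universal covering morphism $\pi_{\gamma'}$, i.e.\ $\pi_{\gamma'} \circ g = g_0 \circ \pi_\gamma$, which is the required commutative square.

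Uniqueness of $g$ follows from the initiality of $(T_{\gamma'}, \pi_{\gamma'})$ in the category $\mathcal C_{A_{q'}}$ (the ``as a consequence'' clause at the end of Remark \ref{UP}): any rooted isomorphism $g': T_\gamma \to T_{\gamma'}$ with $\pi_{\gamma'} \circ g' = g_0 \circ \pi_\gamma$ is a morphism $(T_\gamma, p) \to (T_{\gamma'}, \pi_{\gamma'})$ in $\mathcal C_{A_{q'}}$ and hence coincides with $g$. I do not anticipate a genuine obstacle; the only point requiring minor care is that post-composition of a covering morphism with an isomorphism is again a covering morphism, which is immediate from Corollary \ref{CoveringCrit}.
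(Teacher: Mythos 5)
Your proof is correct and is essentially the argument the paper has in mind (the paper gives no explicit proof, simply stating the corollary as ``a consequence of the universal property'' from Remark \ref{UP}). You correctly reduce existence to Remark \ref{UP}(iv) by checking that $p := g_0 \circ \pi|_{T_\gamma} : T_\gamma \to A_{q'}$ is a covering morphism via Corollary \ref{CoveringCrit}.

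One small imprecision in the uniqueness step: you attribute uniqueness of $g$ to ``the initiality of $(T_{\gamma'}, \pi_{\gamma'})$'', but initiality of an object controls the morphisms \emph{out} of it, not into it, so by itself it does not tell you that two morphisms $(T_\gamma,p) \to (T_{\gamma'},\pi_{\gamma'})$ must agree. What you actually need is that $(T_\gamma, p)$ is \emph{also} an initial object of $\mathcal C_{A_{q'}}$ --- which is true, since $T_\gamma$ is a tree and $p$ is a covering morphism, so Remark \ref{UP}(iv) together with the ``unique up to unique isomorphism'' clause makes $(T_\gamma,p)$ initial as well. Initiality of $(T_\gamma,p)$ then gives at most one morphism to $(T_{\gamma'},\pi_{\gamma'})$, forcing $g = g'$. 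Alternatively, one can avoid the categorical framing altogether and argue directly from the unique path lifting property: for any vertex $v$ of $T_\gamma$, the unique path from $\gamma$ to $v$ must be sent by both $g$ and $g'$ to the unique lift (at $\gamma'$) of its image under $g_0 \circ \pi$, so $g(v) = g'(v)$. Either fix is routine; the structure of your proof is sound.
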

We single out two special cases of Corollary \ref{LiftingAut} for ease of reference:
\begin{corollary}\label{Cov1} Let $A$ be connected with universal covering $\pi: T \to A$.
\begin{enumerate}[(i)]
\item Every automorphism of $A$ lifts uniquely to an automorphism of $T$.
\item If $v, w$ are vertices of $T$ with $\pi_0(v) = \pi_0(w)$, then $T_v \cong T_w$.
\end{enumerate}
\end{corollary}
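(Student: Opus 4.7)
The plan is to deduce both statements directly from Corollary \ref{LiftingAut}, which already supplies the main technical content; the only work is in choosing the correct vertices $\gamma, \gamma'$ and the correct rooted isomorphism $g_0$ in each case.

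For (i), let $\gamma_0$ denote the root of $T = T_A$, i.e.\ the empty path in $\cE^*_\epsilon$. Then $\pi_0(\gamma_0) = \epsilon$, and since $A$ is connected we have $A_\epsilon = A$. Moreover $T_{\gamma_0} = T$ by the definition of the universal covering tree. Given an automorphism $h\in\Aut(A)$, which is in particular a rooted isomorphism $h : A_\epsilon \to A_\epsilon$, Corollary \ref{LiftingAut} applied with $\gamma = \gamma' = \gamma_0$ and $g_0 = h$ produces a unique rooted isomorphism $g: T \to T$, i.e.\ a unique automorphism $g \in \Aut(T)$, such that $\pi \circ g = h \circ \pi$. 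This gives existence and uniqueness of the lift.

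For (ii), let $v, w$ be vertices of $T$ with $\pi_0(v) = \pi_0(w) = q$. Then $A_{\pi_0(v)} = A_q = A_{\pi_0(w)}$, so the identity map $\id_{A_q}$ is a rooted isomorphism $A_q \to A_q$. Applying Corollary \ref{LiftingAut} with $\gamma = v$, $\gamma' = w$, and $g_0 = \id_{A_q}$ yields a rooted isomorphism $T_v \to T_w$, as desired.

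There is no serious obstacle here: both claims are straightforward specializations of Corollary \ref{LiftingAut}, the only subtle point being the identifications $T_{\gamma_0} = T_A = T$ and $A_\epsilon = A$ used in (i), which follow from the constructions of the universal covering tree and of cone subgraphs.
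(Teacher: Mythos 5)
Your proof is correct and matches the paper's intent exactly: the paper explicitly presents Corollary \ref{Cov1} as two immediate special cases of Corollary \ref{LiftingAut}, and your choices $\gamma=\gamma'=\text{root}$, $g_0=h$ for (i) and $\gamma=v$, $\gamma'=w$, $g_0=\mathrm{id}_{A_q}$ for (ii) are precisely the intended specializations.
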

\subsection{Coverings of automata}
To define coverings of DFAs we will think of DFAs as finite labelled graphs.
\begin{definition} Let $\Sigma$ be a non-empty set and let $(A, \ell)$ and $(A', \ell')$ be $\Sigma$-labelled graphs. We say that a morphism $p: A \to A'$ is \emph{label-preserving} if $\ell' \circ p_1 = \ell$.  If $p$ is a covering morphism then it is called a \emph{labelled covering morphisms}.
\end{definition}
\begin{remark}[Coverings of automata]
If $\mathcal A$ and $\mathcal A'$ are DFAs, then we refer to a labelled covering morphism of the underlying labelled graphs as a \emph{covering morphism} from $\mathcal A$ to $\mathcal A'$. By contrast, we refer to a covering morphism of the underlying (unlabelled) graphs as a \emph{geometric covering morphism} of the respective automatons. 
\end{remark}
\begin{remark}[Label preserving morphisms]
Let $A =(Q, \cE, s,t, \epsilon)$ and $A'=(Q', \cE', s',t', \epsilon')$ be graphs and let $\ell: \cE \to \Sigma$ and $\ell': \cE' \to \Sigma$ be admissible labellings. Note that if we identify $\cE$ and $\cE'$ with subsets of $Q \times \Sigma \times Q$ and $Q' \times \Sigma \times Q'$ as in \eqref{Paths}, then a morphism $p: A \to A'$ is label-preserving if and only if
\begin{equation}\label{p1fromp0}
p_1(v, \alpha, w) = (p_0(v), \alpha, p_0(w)) \quad \text{for all } (v, \alpha, w) \in \cE.
\end{equation}
This implies, firstly, that every label-preserving morphism $p$ is uniquely determined by $p_0$. Secondly, since covering morphisms are local isomorphisms on edges, we obtain the following proposition.
\end{remark}
\begin{proposition}[Lifting labels]\label{ColorLift} If $p: B \to A$ is a covering morphism and $\ell: A \to \Sigma$ is an admissible coloring, then there exists a unique admissible labelling $\ell_B: B \to \Sigma$ such that $p$ is label-preserving.
\end{proposition}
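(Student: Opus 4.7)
The plan is to show existence by the obvious candidate $\ell_B := \ell \circ p_1$, and then derive uniqueness from the defining equation of label-preservation.

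First I would observe that any labelling $\ell_B: \mathcal{E}_B \to \Sigma$ making $p$ label-preserving must satisfy $\ell_B = \ell \circ p_1$ by definition of ``label-preserving''. This immediately gives uniqueness, and it forces the definition $\ell_B := \ell \circ p_1$ for the existence part. By construction $\ell_B$ then satisfies $\ell \circ p_1 = \ell_B$, so $p$ is label-preserving with respect to $\ell_B$.

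The remaining (and only substantive) step is to verify that $\ell_B$ is admissible, i.e.\ that $\ell_B|_{q^+}$ is injective for every vertex $q$ of $B$. This is where the covering hypothesis enters: by Proposition \ref{Covering1}, $p_1$ restricts to a bijection $q^+ \to p_0(q)^+$, and by admissibility of $\ell$ the restriction $\ell|_{p_0(q)^+}$ is injective. Hence $\ell_B|_{q^+} = \ell|_{p_0(q)^+} \circ p_1|_{q^+}$ is a composition of two injections and therefore injective.

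I do not expect any real obstacle here; the statement is essentially a formal consequence of the local bijectivity built into the definition of a covering morphism together with admissibility of $\ell$. The only point to be careful about is that admissibility is a local condition at each vertex, so one must use precisely the local bijectivity $p_1: q^+ \to p_0(q)^+$ rather than any global property of $p_1$.
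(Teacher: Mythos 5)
Your proof is correct and follows the same reasoning the paper indicates (the paper gives only a one-line justification in the preceding remark, namely that covering morphisms are local isomorphisms on edges). Your use of Proposition~\ref{Covering1} to obtain the local bijection $q^+ \to p_0(q)^+$, combined with admissibility of $\ell$, is exactly the intended argument for both existence and the admissibility check, and the uniqueness argument from $\ell_B = \ell \circ p_1$ being forced is likewise what the paper has in mind.
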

\begin{example}[Labelled universal covering]\label{LUC}
If $A$ is a $\Sigma$-labelled graph then by Proposition \ref{ColorLift} the universal covering tree $T_A$ of $A$ inherits a canonical labelling $\ell: T_A \to \Sigma$ such that the universal covering morphism $\pi: T_A \to A$ is label-preserving. We then refer to $(T_A, \ell)$ as the \emph{labelled universal coloring tree} of $A$. \end{example}

\begin{example}[Path language tree]\label{PLT}
Let $\mathcal A$ be a DFA over $\Sigma$ with underlying graph $A = (Q, \cE, s,t, \epsilon)$ and labelling $\ell_A: A \to \Sigma$. We can describe the labelled universal covering tree of $(A, \ell_A)$ in terms of the regular language $\mathcal L_{\mathcal A}$ of $\mathcal A$ as follows:
\begin{itemize}
\item If $V(T) = \cE^*_{\epsilon}$ denotes the vertex set of the universal covering tree, then the path labelling restricts to a bijection $V(T) \to \mathcal L_{\mathcal A}$ which we use to identify $V(T)$ with $\mathcal L_{\mathcal A}$.
\item Using the lifted labelling and \eqref{Paths} we can the identify the edge set $E(T)$ of the universal covering tree with the subset of $\mathcal L_{\mathcal A} \times \Sigma \times \mathcal L_{\mathcal A}$ consisting of all triples of the form
 \begin{equation}\label{EdgesPLT}
(\alpha_1\cdots \alpha_n, \alpha_{n+1}, \alpha_1 \cdots \alpha_{n+1}) \quad \text{with $\alpha_i \in \Sigma$},
\end{equation}
and the labelling $\ell_T: E(T) \to \Sigma$ is given by the projection onto the second coordinate. 
\end{itemize}
In other words, the labelled tree $T_{\mathcal A} := (T, \ell_T)$ is just the regular language associated with $\mathcal A$ together with its natural tree structure and labelling in which each edge is labelled by the final letter of its target. $T_{\mathcal A}$ is thus sometimes called the \emph{path language tree} of $\mathcal A$ in the literature.
\end{example}

\section{Portraits of automorphisms} \label{sec:describing.automorphisms.of.universal.covering.trees}

Throughout this section let $\mathcal A$ be a DFA over some alphabet $\Sigma$ with underlying graph $A= (Q, \cE, s,t, \epsilon)$ and labelling $\ell_A: \cE \to \Sigma$. We denote by $\mathcal L \subset \Sigma^*$ the regular language associated with $\mathcal A$ and by $T$ the associated path language tree with vertex set $\mathcal L$ and edge set $E(T) \subset \mathcal L \times \Sigma \times \mathcal L$ as in Example \ref{PLT}. We recall that the lifted labelling $\ell_T: E(T) \to \Sigma$ is given by the projection to the second coordinate. We denote by $\pi: T \to A$ the universal covering projection which maps every vertex $v \in \mathcal L$ to the state of $\mathcal A$ which is reached after reading the word $v$.
\begin{notation}
   For every vertex $q \in Q$ (respectively $v \in \mathcal L$) we denote by $\Sigma(q) := \ell_A(q^+)$ (respectively $\Sigma(v) := \ell_T( v^+ )$) the labels of the outgoing edges. Since $\pi$ is labelling preserving we have $\Sigma(v) = \Sigma(\pi_0(v))$ for all $v \in \mathcal L$.
\end{notation}
 \begin{definition} A collection $\sigma = (\pre{v}\sigma)_{v \in \mathcal L}$ of injections $\pre{v}\sigma: \Sigma(v) \to \Sigma$ is called a \emph{portrait}.
 \end{definition}
\begin{con}[Portrait of an automorphism]
If $g \in \Aut(T)$ is an automorphism of $T$ and $v \in \mathcal L$, then $g$ maps the children of $v$ to the children of $g(v)$. There thus exists a unique injection $\pre{v}\sigma(g): \Sigma(v) \to \Sigma$ such that 
\begin{equation}\label{OriginalPortrait}
g(v\alpha) = g(v)\pre{v}\sigma(g)(\alpha) \quad \text{for all }\alpha \in \Sigma(v).
\end{equation}
Then $\sigma(g) =  (\pre{v}\sigma(g))_{v \in \mathcal L}$ is called the \emph{portrait of the automorphism $g $}.  We emphasize that this portrait depends 
on the choice of admissible labelling $\ell_A$.
We say that a portrait $\sigma$ \emph{defines an automorphism of $T$} if there exists $g \in \Aut(T)$ with $\sigma(g) = \sigma$.
\end{con}

\begin{remark}[Automorphisms from portraits]\label{PortAut} It is immediate from the definition of the portrait that for all $g \in \Aut(T)$ and $\alpha_1, \dots, \alpha_n \in \Sigma$ with $\alpha_1 \cdots \alpha_n \in \mathcal L$ we have
\begin{equation}\label{LocalPermutation}g(\alpha_1 \cdots \alpha_n) = \pre{e}\sigma(g)(\alpha_1) \pre{\alpha_1}\sigma(g)(\alpha_2) \cdots \pre{\alpha_1 \cdots \alpha_{n-1}}.\sigma(g)(\alpha_n) \end{equation}
In particular, every automorphism is uniquely determined by its portrait. To describe all automorphisms of $T$ it thus suffices to characterize all of their portraits. 
\end{remark}

\begin{con}
Given a portrait $\sigma = (\pre{v}\sigma)_{v \in \mathcal L}$ and a word $\alpha_1 \cdots \alpha_n \in \mathcal L$ we define
\begin{equation}\label{eq:definition.of.automorphism.via.portrait} \sigma(\alpha_1 \dots \alpha_n) := \pre{\epsilon}{\sigma}(\alpha_1) \dots \pre{\alpha_1 \dots \alpha_{i-1}}{\sigma}(\alpha_i) \dots \pre{\alpha_1 \dots \alpha_{n-1}}{\sigma}(\alpha_n) \in \Sigma^*.\end{equation}
Note that if $\sigma = \sigma(g)$ for some $g \in \Aut(T)$, then for all $v \in \mathcal L$ we have $\sigma(v) = g(v) \in \mathcal L$, whereas in general we only have $\sigma(v) \in \Sigma^*$.
\end{con}

\begin{theorem}\label{AbstractPortraitsGood}  Let $\sigma = (\pre{v}{\sigma})_{v \in \mathcal L}$ be an abstract portrait. Then the following are equivalent:
\begin{enumerate}[(i)]
\item $\sigma$ defines an automorphism of $T$.
\item $\sigma(\mathcal L) = \mathcal L$ and for every $v \in \mathcal L$ we have $\pre{v}{\sigma}(\Sigma_v) = \Sigma_{\sigma(v)}$.
\item $\sigma(\mathcal L) \subset \mathcal L$ and for every  $v \in \mathcal L$ we have  $\pre{v}{\sigma}(\Sigma_v) \subset \Sigma_{\sigma(v)}$.
\end{enumerate}
\end{theorem}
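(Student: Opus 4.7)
My plan is to prove the equivalence via the loop $(i)\Rightarrow(ii)\Rightarrow(iii)\Rightarrow(i)$. The implication $(ii)\Rightarrow(iii)$ is tautological. For $(i)\Rightarrow(ii)$, I would assume $\sigma = \sigma(g)$ for some $g \in \Aut(T)$; a short induction on word length using \eqref{LocalPermutation} shows that $\sigma(v) = g(v)$ for every $v \in \mathcal L$. Hence $\sigma(\mathcal L) = g(\mathcal L) = \mathcal L$, because $g$ permutes the vertex set of $T$, and $g$ sends the children of $v$ bijectively to the children of $g(v)$, which forces $\pre{v}\sigma(\Sigma(v)) = \Sigma(g(v)) = \Sigma(\sigma(v))$.

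The heart of the proof is the implication $(iii)\Rightarrow(i)$. I would define $g: \mathcal L \to \Sigma^*$ by $g(v) := \sigma(v)$ and first show by induction on $|v|$, using the recursion \eqref{eq:definition.of.automorphism.via.portrait} and the assumption $\sigma(\mathcal L) \subset \mathcal L$, that in fact $g(v) \in \mathcal L$ and $|g(v)| = |v|$. So $g$ restricts to a self-map of each level $\mathcal L_n$. A second induction on length then proves $g$ is injective: if $g(u\alpha) = g(v\beta)$, matching the unique decomposition of a word of length $n+1$ into a prefix of length $n$ followed by a letter yields $g(u) = g(v)$ and $\pre{u}\sigma(\alpha) = \pre{v}\sigma(\beta)$, so the inductive hypothesis forces $u = v$, and the injectivity of $\pre{u}\sigma$ built into the notion of a portrait gives $\alpha = \beta$.

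At this point the finiteness of $\Sigma$ becomes crucial: each $\mathcal L_n \subset \Sigma^n$ is finite, so the injective self-map $g : \mathcal L_n \to \mathcal L_n$ is automatically bijective. To upgrade the containment $\pre{v}\sigma(\Sigma(v)) \subset \Sigma(g(v))$ in (iii) to the equality demanded by (ii), I would count children level by level: summing over $v \in \mathcal L_n$ and using that $g|_{\mathcal L_n}$ is a bijection gives
\[
\sum_{v \in \mathcal L_n}|\Sigma(v)| \;=\; |\mathcal L_{n+1}| \;=\; \sum_{v \in \mathcal L_n}|\Sigma(g(v))|.
\]
Since by (iii) each summand on the left is at most the corresponding one on the right, the two agree termwise, and $\pre{v}\sigma : \Sigma(v) \to \Sigma(g(v))$ is a bijection between finite sets of equal size. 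Consequently $g$ sends the outgoing edges at $v$ bijectively onto those at $g(v)$, and combined with the vertex bijection this makes $g$ a rooted automorphism of $T$. Reading off its portrait from \eqref{eq:definition.of.automorphism.via.portrait} recovers $\sigma$, which establishes (i).

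The main obstacle is exactly this asymmetry between (ii) and (iii): condition (iii) only asserts a local containment of outgoing labels, which without finiteness would be strictly weaker than (ii), since an injection of an infinite set into itself need not be surjective. The counting step above is where the finiteness of the alphabet $\Sigma$, and hence of each level $\mathcal L_n$, is essential; it is what allows the local embedding property encoded in (iii) to bootstrap into the local bijectivity encoded in (ii), and thereby to assemble into a global automorphism of $T$.
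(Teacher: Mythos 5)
Your proof is correct, and for the hard implication $(iii)\Rightarrow(i)$ it is in fact more explicit than the paper's. The paper reduces $(iii)\Rightarrow(i)$ to Lemma~\ref{lem:induction.over.portrait}, which by induction shows that $\sigma$ defines a well-defined, level- and child-preserving self-map $\mathcal L \to \mathcal L$; it then declares that this ``follows by induction'' without spelling out why $\sigma$ is a \emph{bijection} rather than merely an injective endomorphism. Your argument fills this gap cleanly: first level-wise injectivity by matching the unique ``length-$n$ prefix, last letter'' decomposition, then bijectivity on each $\mathcal L_n$ from finiteness, and finally the per-level counting
\[
\sum_{v \in \mathcal L_n}|\Sigma(v)| \;=\; |\mathcal L_{n+1}| \;=\; \sum_{v \in \mathcal L_n}|\Sigma(g(v))|,
\]
which upgrades the termwise inequalities $|\Sigma(v)| \le |\Sigma(g(v))|$ coming from (iii) into equalities, i.e.\ recovers condition (ii) and shows each $\pre{v}\sigma$ is a local bijection. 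This is exactly the point at which local finiteness of $T$ enters, and you are right that this is where (iii), which is a priori weaker, bootstraps to (ii). In short, your approach uses the paper's inductive lemma as its engine but makes the passage from ``tree endomorphism'' to ``tree automorphism'' fully explicit; the paper leaves that last step to the reader.
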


The implications (i)$\Longrightarrow$(ii)$\Longrightarrow$(iii) are obvious. For the non-trivial implication (iii)$\Longrightarrow$(i) it suffices to show that for $\sigma$ as in (iii) the map $\sigma: \mathcal L \to \mathcal L$ defines an automorphism of $T$. This follows by induction from the following lemma:

\begin{lemma} \label{lem:induction.over.portrait}
    Suppose that $\sigma(v) \in \mathcal L$ for all words $v$ of length $\leq n$ and suppose that for all $v \in \mathcal L$ of length $n$ we have
    \[ \pre{v}{\sigma} : \Sigma(v) \rightarrow \Sigma(\sigma(v)). \]
    Then for all $w \in \mathcal L$ of length $n+1$ we have $\sigma(w) \in \mathcal L$ and $\sigma(w)$ is a child of $\sigma(v)$.
\end{lemma}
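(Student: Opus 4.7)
The proof is essentially a direct unpacking of the definition of $\sigma$ on a word in terms of $\sigma$ on its prefix, so the plan is mostly bookkeeping rather than exploiting any deep structure. The main point to verify is that the definition in \eqref{eq:definition.of.automorphism.via.portrait} is multiplicative in the obvious sense on prefixes, after which the conclusion falls out of the hypothesis on $\pre{v}\sigma$.

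Concretely, I would start by writing the word $w \in \mathcal L$ of length $n+1$ uniquely as $w = v\alpha$ with $v \in \Sigma^n$ and $\alpha \in \Sigma$. Since $\mathcal L$ is the set of words accepted by the DFA and the transition function is a partial function, every prefix of a word in $\mathcal L$ lies in $\mathcal L$, so $v \in \mathcal L$ and moreover $\alpha \in \Sigma(v)$ by the very definition $\Sigma(v) = \{\alpha \in \Sigma \mid v\alpha \in \mathcal L\}$. In particular the hypotheses of the lemma apply to this specific $v$: we have $\sigma(v) \in \mathcal L$, and the map $\pre{v}\sigma$ sends $\Sigma(v)$ into $\Sigma(\sigma(v))$.

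Next, I would compare the defining formula \eqref{eq:definition.of.automorphism.via.portrait} for $\sigma(w)$ and for $\sigma(v)$. Unpacking the telescoping product gives
\[
\sigma(w) \;=\; \sigma(v\alpha) \;=\; \sigma(v)\cdot \pre{v}\sigma(\alpha),
\]
since the first $n$ factors in the expansion of $\sigma(w)$ agree with those of $\sigma(v)$, and the $(n{+}1)$th factor is precisely $\pre{\alpha_1 \cdots \alpha_n}\sigma(\alpha_{n+1}) = \pre{v}\sigma(\alpha)$. Setting $\beta := \pre{v}\sigma(\alpha)$, the hypothesis gives $\beta \in \Sigma(\sigma(v))$, which by definition of $\Sigma(\sigma(v))$ means exactly $\sigma(v)\beta \in \mathcal L$. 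Hence $\sigma(w) = \sigma(v)\beta \in \mathcal L$ and $\sigma(w)$ is a child of $\sigma(v)$ in $T$, as required.

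There is no real obstacle here: the only things to keep straight are (a) that the prefix $v$ of length $n$ of a word of length $n+1$ in $\mathcal L$ automatically lies in $\mathcal L$ and that the last letter automatically lies in $\Sigma(v)$, so that the inductive hypothesis genuinely applies, and (b) that the formula \eqref{eq:definition.of.automorphism.via.portrait} is consistent under concatenation by a single letter, which is immediate from its definition as a product of local permutations along the path. Once these two observations are in place the lemma is done in two lines, which then drives the induction needed for the implication (iii)$\Rightarrow$(i) of Theorem \ref{AbstractPortraitsGood}.
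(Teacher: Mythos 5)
Your proof is correct and follows essentially the same route as the paper's: decompose $w = v\alpha$, observe $\sigma(w) = \sigma(v)\,\pre{v}\sigma(\alpha)$ directly from \eqref{eq:definition.of.automorphism.via.portrait}, and then use the hypothesis $\pre{v}\sigma(\alpha) \in \Sigma(\sigma(v))$ to conclude $\sigma(w)$ is a child of $\sigma(v)$ in $T$. The extra remarks you include (prefix-closure of $\mathcal L$, $\alpha \in \Sigma(v)$) are harmless sanity checks that the paper leaves implicit.
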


\begin{proof}
    Let $w = \alpha_1 \dots \alpha_{n+1}$. Then
    \[ \sigma(w) = \sigma(v) \pre{v}{\sigma}(\alpha_{n+1}). \]
    By assumption, $\sigma(v) \in V(T)$ and $\pre{v}{\sigma}(\alpha_{n+1})$ is the label of an outgoing edge at the vertex $\sigma(v)$. Thus $\sigma(v) \pre{v}{\sigma}(\alpha_{n+1}) \in \mathcal{C}(\sigma(v)) \subset \mathcal L$.
\end{proof}

\begin{remark}\label{rem:PortraitsGood}
In view of Lemma \ref{lem:induction.over.portrait} we can check that a given portrait $\sigma = (\pre{v}{\sigma})_{v \in \mathcal L}$ defines an automorphism as follows:
\begin{itemize}
\item Check that $\pre{e}\sigma$ maps $\Sigma(e)$ to $\Sigma(e)$. Use this to compute $\sigma(\alpha)$ for every $\alpha \in \Sigma(e)$.
\item Continue inductively to check for all words $v$ of length $n \geq 1$ that $\pre{v}\sigma$ maps $\Sigma(v)$ to $\Sigma(\sigma(v))$, and use this to compute $\sigma$ on all words of length $n+1$.
\end{itemize}
In theory, this procedure provides a description of all automorphism of the tree $T$ through their portraits (which is particularly useful when one is interested in the specific labelling $\ell_T$), but there are several problems. Firstly, there are infinitely many conditions to check, although this cannot be avoided. More importantly, the conditions we need to check are not local: In order to check the condition for $\pre{v}{\sigma}$ we need to compute $\sigma(v)$, which involves $\pre{w}\sigma$ for all prefixes $w$ of $v$. As a consequence, while we can check whether a given portrait defines an automorphism, it can be difficult to construct automorphisms, or to understand the structure (or even the size) of $\Aut(T)$. 

In order to deal with these problems we are going to describe a special class of labellings for the tree $T$ such that the corresponding portraits take a particular simple form. This will allow us to describe the group structure of $\Aut(T)$. Using this description of $\Aut(T)$ we will then discuss how automorphisms of $T$ interact with the original labelling.
\end{remark}

\section{Geometric Myhill-Nerode theory}\label{SecMNT}
\subsection{Classical vs.\ geometric Myhill-Nerode theory}
Classical Myhill-Nerode theory is concerned with the most efficient description of regular languages by automata; a convenient reference is  \cite{BerstelBoassonCartonFagnot21}.
Recall that two states $q_1$ and $q_2$ of a DFA $\mathcal A$ are called \emph{Nerode congruent} if their reachable closures $\mathcal A[q_1]$ and $\mathcal A[q_2]$ define the same regular language; this defines an equivalence relation on the set of states of $\mathcal A$. A DFA $\mathcal A$ is called \emph{minimal} if its Nerode congruence is trivial. One can show that every DFA covers a unique minimal DFA, whose sets of states is given by Nerode congruence classes of states of the original DFA. The problem of algorithmically computing the minimal DFA (or equivalently the Nerode congruence) for a given DFA is known as the \emph{minimization problem} for DFAs; a classical algorithm which achieves this is \emph{Moore's algorithm} (see \cite[Section 4]{BerstelBoassonCartonFagnot21}). We emphasize that both the definition of the Nerode congruence and of the minimal DFA depend on the labelling of $\mathcal A$, not just the underlying graph.

Here we will be interested in a variant of Myhill-Nerode theory (which we call \emph{geometric Myhill-Nerode theory}), which applies to finite \emph{unlabelled} graphs rather than DFAs. In fact, we will be interested in trees which arise as universal covering trees of finite connected graphs and more specifically in the smallest possible covering quotients (in terms of the number of vertices, say) of such trees. By analogy with the case of automata we will refer to such graphs as \emph{minimal graphs}. It turns out that the geometric theory closely parallels the classical one. In particular, we will show that every finite graph admits a unique minimal graph as a covering quotient, and this graph can be constructed by a variant of Moore's algorithm. As an application we will show that every self-similar tree admits a self-similar structure which is well-adapted to the action of the automorphism group. This will then allow us to compute the automorphism groups of such trees in Section \ref{sec:automorphisms.of.selfsimilar.trees} below.
\begin{remark}
We warn the reader that while geometric Myhill-Nerode theory parallels the classical theory, it is not a formal consequence of the latter. In particular, the underlying graph of the minimal DFA of a DFA is \emph{not} the minimal graph covered by the underlying graph in general; see Remark \ref{rem:geometric.vs.language.myhill.nerode} and Example \ref{exam:geometric.vs.language.myhill.nerode} below.
\end{remark}
\subsection{Self-similar trees}
To motivate our geometric theory, we reprove a characterization of trees with finitely many cone type due originally to Belk, Bleak and Matucci (\cite{BelkBleakMatucci21}). For this let $T$ be a locally finite rooted tree with vertex set $V(T)$ and edge set $E(T)$. We recall the following terminology from \cite{BelkBleakMatucci21}:
\begin{definition} \label{def:selfsimilartree}
 Let $T$ be a locally finite rooted tree with vertex set $V(T)$. Assume that we are given a non-empty finite set $M$, a function $\texttt{type}: V(T) \to M$, and for all $u,v \in V(T)$  with $\texttt{type}(u) = \texttt{type}(v)$ a non-empty set $\Mor(u,v)$ of rooted tree isomorphisms $T_u \rightarrow T_v$ (called \emph{morphisms}) such that the following properties hold.   
     \begin{enumerate}[(M1)]
     \addtocounter{enumi}{-1}
         \item If $\varphi \in \Mor(u,v)$ and $u'$ is a vertex in $T_u$, then $\texttt{type}(u') = \texttt{type}(\varphi(u'))$.
        \item If $\varphi \in \Mor(u,v)$, then $\varphi^{-1} \in \Mor(v,u)$.
        \item If $\varphi \in \Mor(u,v)$ and $\psi \in \Mor(v,w)$, then $\psi \circ \varphi \in \Mor(u,w)$.
        \item If $\varphi \in \Mor(v,w)$ and $u \in T_v$, then $\varphi\vert_{T_u} \in \Mor(u, \varphi(u))$.       
    \end{enumerate}
We then refer to $S = (M, \texttt{type}, (\Mor(u,v))_{u,v})$ as a \emph{self-similar structure} on $T$. A self-similar structure is \emph{rigid} if $|\Mor(u,v)| \leq 1$ for all $u,v \in V(T)$. 
 \end{definition}
 We are going to reprove the following characterization from 
\cite{BelkBleakMatucci21}:
\begin{theorem}[Belk--Bleak--Matucci]\label{BBMThm} For a locally finite tree $T$ the following conditions are equivalent:
\begin{enumerate}[(i)]
\item $T$ has finitely many cone types.
\item $T$ admits a self-similar structure.
\item $T$ admits a rigid self-similar structure.
\item $T$ is isomorphic to the (underlying tree of the) path language tree of a finite automaton.
\item $T$ is isomorphic to the universal covering tree of a finite connected graph.
\end{enumerate}
\end{theorem}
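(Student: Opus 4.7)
The plan is to close the cycle $\mathrm{(i)} \Rightarrow \mathrm{(v)} \Leftrightarrow \mathrm{(iv)}$ and $\mathrm{(v)} \Rightarrow \mathrm{(iii)} \Rightarrow \mathrm{(ii)} \Rightarrow \mathrm{(i)}$. Two legs are essentially formal: $\mathrm{(iii)} \Rightarrow \mathrm{(ii)}$ is tautological, and in $\mathrm{(ii)} \Rightarrow \mathrm{(i)}$ axiom (M0) forces any two equi-typed vertices to have isomorphic cone subtrees, so the number of cone types is at most the cardinality of the finite type set $M$. For $\mathrm{(iv)} \Leftrightarrow \mathrm{(v)}$, Example \ref{AutoGraph} identifies a reduced DFA (obtained from any DFA by taking $\mathcal A[\epsilon]$ without altering the path language tree) with a finite connected labelled graph, and Example \ref{PLT} identifies the underlying unlabelled tree of its path language tree with the universal covering tree of its underlying graph; the reverse direction uses the canonical labelling to turn an arbitrary finite connected graph into (the underlying labelled graph of) a DFA.

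For $\mathrm{(v)} \Rightarrow \mathrm{(iii)}$, I would equip the given finite connected graph $A$ with its canonical labelling and, by Proposition \ref{ColorLift}, lift this to an admissible labelling on $T := T_A$ that makes $\pi: T \to A$ label-preserving. Setting $\texttt{type}(v) := \pi_0(v)$, I would then apply Corollary \ref{LiftingAut} with $g_0 = \Id_{A_q}$ (where $q := \pi_0(u) = \pi_0(v)$) to obtain, for every pair of equi-typed vertices $u,v$, a unique rooted isomorphism $\varphi_{u,v}: T_u \to T_v$ with $\pi \circ \varphi_{u,v} = \pi|_{T_u}$; declaring $\Mor(u,v) := \{\varphi_{u,v}\}$ yields a rigid self-similar structure. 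Axioms (M0)--(M3) all follow from the uniqueness in Corollary \ref{LiftingAut}: for instance, in (M3) the restriction of $\varphi_{v,w}$ to $T_u$ is another rooted isomorphism with the same covering-compatibility property, hence must equal $\varphi_{u,\varphi_{v,w}(u)}$.

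The substantive content is in $\mathrm{(i)} \Rightarrow \mathrm{(v)}$. Given $T$ with finitely many cone types $\tau_1, \ldots, \tau_k$, where $\tau_1$ is the type of the cone at the root, I would fix a representative $v_i \in V(T)$ of each type $\tau_i$ and set
\[ n_{ij} := \#\{ w \in \mathcal C(v_i) \mid T_w \text{ has cone type } \tau_j \}. \]
The first delicate point is well-definedness of $n_{ij}$: any rooted isomorphism between two representatives of type $\tau_i$ restricts to a bijection on children and further restricts on each child to an isomorphism of cones, and so preserves the cone type of each child. I would then define $A$ as the finite graph with vertex set $\{\tau_1, \ldots, \tau_k\}$, root $\tau_1$, and exactly $n_{ij}$ oriented edges from $\tau_i$ to $\tau_j$, and construct a rooted graph isomorphism $\varphi: T_A \to T$ by induction on level. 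At each inductive step, a vertex $\gamma \in V(T_A)$ already sent to some $v \in V(T)$ with $\pi_0(\gamma) = \tau_i = \texttt{type}(v)$ has $\sum_j n_{ij}$ outgoing edges (with $n_{ij}$ of them targeting $\tau_j$), while $v$ has $\sum_j n_{ij}$ children (with $n_{ij}$ of cone type $\tau_j$); any bijection between these two families extends $\varphi$ one level further while preserving the type-matching hypothesis.

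The main obstacle will be verifying that $n_{ij}$ is a genuine invariant of the cone type $\tau_i$ and that the level-by-level matching in $\mathrm{(i)} \Rightarrow \mathrm{(v)}$ assembles into a globally well-defined rooted graph isomorphism. Once these points are settled, the requisite edge bijections come for free (both $T$ and $T_A$ being trees, edges are determined by their endpoints), and every remaining step reduces transparently to the covering-theoretic machinery developed in Section \ref{SecCovering}.
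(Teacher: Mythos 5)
Your proposal is correct and follows essentially the same route as the paper: (iii)$\Rightarrow$(ii)$\Rightarrow$(i) and (iv)$\Leftrightarrow$(v) are dispatched formally, (v)$\Rightarrow$(iii) via unique lifting of $\Id_{A_q}$ is exactly Construction \ref{example:pathlanguagetree}, and your graph on cone types with edge multiplicities $n_{ij}$ is precisely the minimal covering quotient of Construction \ref{MinQuot}, with the same well-definedness check that the edge count is a cone-type invariant. The only cosmetic difference is that the paper assembles the data into a covering morphism $T \to \overline{A}$ and invokes Remark \ref{UP}.(iv), whereas you build the isomorphism $T_A \to T$ directly by induction on level; also note it is the \emph{non-emptiness} of $\Mor(u,v)$ for equi-typed $u,v$, not (M0), that gives $T_u \cong T_v$ in (ii)$\Rightarrow$(i).
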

In \cite{BelkBleakMatucci21} a locally finite tree with these properties is called a \emph{self-similar tree}. We emphasize that a self-similar tree in this sense may admit many different self-similar structures. 

Concerning the proof of Theorem \ref{BBMThm} we note that the implications
\[\text{(iii)$\Rightarrow$(ii)$\Rightarrow$(i)}\] are obvious. Since finite automata are just finite connected labelled graphs and since every finite connected graph admits an admissible labelling we also have the equivalence (iv)$\Leftrightarrow$(v). The implication (v)$\Rightarrow$(iii) follows from the following construction:
 \begin{con}[Self-similar structures on a universal covering tree] \label{example:pathlanguagetree} Let $A$ be a finite connected graph and denote by $T = T_A$ its universal covering tree and by $\pi: T \to A$ the universal covering projection.
 
If $v,w$ are vertices of $T$ with $\pi_0(v) = \pi_0(w) =: q$, then by Corollary \ref{LiftingAut} the identity on $A_q$ lifts uniquely to an automorphism $\phi_{vw}:T_v \to T_w$. We thus obtain a rigid self-similar structure $S_A$ on $T$ by setting 
\[M := Q, \quad \texttt{type} := \pi_0 \qand S_A(v,w) = \{\phi_{vw}\}.
\] 
We refer to $S_A$ as the \emph{rigid self-similar structure} on $T_A$ \emph{induced} by $A$.

A slightly larger self-similar structure on $A$ can be defined as follows. Given vertices $v,w$ with $\pi_0(v) = \pi_0(w) = q$ and a rooted automorphism $\psi$ of $A_q$ we denote by $\hat{\psi}$ its unique lift to an isomorphism $T_v \to T_w$. Then the \emph{maximal self-similar structure} on $T_A$ \emph{induced} by $A$ is defined by
\[
M := Q, \quad \texttt{type} := \pi_0 \qand S_A^{\max}(v,w) := \{\hat \psi \mid \psi \in \Aut(A_{\pi_0(v)})\}.
\]
\end{con}
This establishes the implication (v)$\Rightarrow$(iii), and it remains only to show the implication (i)$\Rightarrow$(v). This is where our variant of the Myhill-Nerode construction enters the picture:
\begin{con}[Geometric Myhill-Nerode construction]\label{MinQuot} Let $T$ be a tree with finitely many cone types. We define the \emph{geometric Nerode congruence} on the vertex set $V(T)$ of $T$ by
\[
v \equiv w \iff T_v \cong T_w.
\]
We then denote by $\overline{Q}$ the quotient of $V(T)$ by $\equiv$. Since $T$ has finitely many cone types, $\overline{Q}$ is finite. We denote by $\overline{\pi}_0: V(T) \to \overline{Q}$ the canonical projection and by $\overline{\epsilon}$ the equivalence class of the root of $T$.

We now define a finite graph $\overline{A}$ with vertex set $\overline{Q}$ as follows: We choose once and for all a section $\sigma: \overline{Q} \to V(T)$ of $\overline{\pi}_0$ and, given vertices $\overline{q}_1, \overline{q}_2 \in \overline{Q}$, we connect $\overline{q}_1, \overline{q}_2 \in \overline{Q}$ by $n_\sigma(\overline{q}_1, \overline{q}_2)$ edges, where $n_\sigma(\overline{q}_1, \overline{q}_2)$ is the number of edges from $\sigma(\overline{q}_1)$ to \emph{some} representative of $\overline{q}_2$. Note that
since $T$ is locally finite, the graph $\overline{A}$ is finite. In the sequel we refer to $\overline{A}$ as the \emph{minimal covering quotient} of $T$ and say that $\overline{A}$ is a \emph{minimal graph}.

We now claim that $n_\sigma(\overline{q}_1, \overline{q_2})$ is independent of the choice of section $\sigma$. Indeed, if $\sigma'$ is a different section, then $\sigma(\overline{q}_1) \sim \sigma'(\overline{q}_1)$, hence there is an isomorphism  $\phi: T_{\sigma(\overline{q}_1)} \to T_{\sigma'(\overline{q}_1)}$. We have 
\[
n_\sigma(\overline{q}_1, \overline{q}_2) = |M_\sigma(\overline{q}_1, \overline{q}_2)|, \; \text{where} \; M_\sigma(\overline{q}_1, \overline{q}_2) := \{v \in \mathcal C(\sigma(\overline{q}_1)) \mid \overline{\pi}_0(v) = \overline{q}_2\}
\]
and $\phi$ restricts to a bijection $\mathcal C(\sigma(\overline{q}_1)) \to \mathcal C(\sigma'(\overline{q}_1))$ which preserves cone types, hence to a bijection $M_\sigma(\overline{q}_1, \overline{q}_2) \to M_{\sigma'}(\overline{q}_1, \overline{q}_2)$. This shows that $n_\sigma(\overline{q}_1, \overline{q}_2) = n_{\sigma'}(\overline{q}_1, \overline{q}_2)$ and proves the claim.

There are now many ways to complete the map $\overline{\pi}_0$ into a covering morphism $\overline{\pi}: T \to \overline{A}$. Given $\overline{q}_1, \overline{q}_2 \in \overline{Q}$ and a lift $v_1 \in V(T)$ of $\overline{q}_1$, then by the previous claim
\[
|\overline{q}_1^+ \cap \overline{q}_2^-| = \left| \bigsqcup_{v_2 \in \overline{\pi}_0^{-1}(\overline{q}_2)} v_1^+ \cap v_2^- \right|.
\]
Consequently we can map the edges connecting $v_1$ with lifts of $\overline{q}_2$ bijectively onto the edges in $\overline{q}_1^+ \cap \overline{q}_2^-$ in any way we want. This will extend $\overline{\pi}_0$ to a morphism $\overline{\pi}: T \to \overline{A}$, which is then a covering morphism by Corollary \ref{CoveringCrit}. In particular, $\overline{A}$ is a finite covering quotient of $T$, and thus $T$ is isomorphic to the universal covering tree of the finite graph $\overline{A}$.
\end{con}
At this point we have established Theorem \ref{BBMThm}.

\subsection{Minimal covering quotients} \label{subsec:minimal.covering.quotients}

The following proposition justifies the name ``minimal covering quotients'':

\begin{proposition}\label{MinCovQ1} Let $T$ be a self-similar tree, let $A$ be a covering quotient of $T$ and let $\overline{A}$ be the minimal covering quotient of $T$.
Then there exists a covering morphism $p: A \to \overline{A}$. 
\end{proposition}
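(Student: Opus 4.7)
The plan is to construct $p$ explicitly by lifting edges of $A$ up to the tree $T$ along $\pi_A$ and projecting them back down to $\overline A$ along $\overline\pi$. Let $\pi_A\colon T \to A$ and $\overline\pi\colon T \to \overline A$ denote the two given covering morphisms. First I would define the vertex-level map $p_0$ by setting $p_0(a) := \overline\pi_0(v)$ for any vertex $v$ of $T$ with $\pi_A(v) = a$. The crucial well-definedness check is this: if $v, v'$ are two lifts of $a$, then $T_v \cong T_{v'}$ by Corollary \ref{Cov1}(ii), hence $v \equiv v'$ with respect to the geometric Nerode congruence, and hence $\overline\pi_0(v) = \overline\pi_0(v')$ by the definition of $\overline A$ in Construction \ref{MinQuot}. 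Surjectivity of $p_0$ is inherited from that of $\overline\pi_0$.

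For the edge map $p_1$, I would choose once and for all a lift $v_a$ in $T$ of each vertex $a$ of $A$. Given an edge $e$ of $A$ with source $a = s(e)$, the unique path lifting property (Proposition \ref{Covering1}) supplies a unique edge $\tilde e$ in $T$ with $s(\tilde e) = v_a$ and $\pi_A(\tilde e) = e$, and I would set $p_1(e) := \overline\pi_1(\tilde e)$. Source compatibility $s(p_1(e)) = p_0(s(e))$ is immediate, and target compatibility $t(p_1(e)) = p_0(t(e))$ holds because $t(\tilde e)$ is a lift of $t(e)$ and $p_0$ has already been shown to be well-defined on vertices. Hence $p = (p_0, p_1)$ is a morphism of rooted directed multigraphs.

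To verify the covering property, I would invoke Corollary \ref{CoveringCrit}: since $\overline A$ is connected (as a covering quotient of the connected tree $T$), it suffices to show that $p_1$ restricts to a bijection from the outgoing edges at $a$ to those at $p_0(a)$ for every vertex $a$. By construction this restriction factors as $\overline\pi_1|_{v_a^+} \circ (\pi_A|_{v_a^+})^{-1}$, a composition of two bijections furnished by the UPLP applied to $\overline\pi$ and $\pi_A$ respectively. I do not anticipate a serious obstacle: the argument is a direct diagram chase, and the only delicate point, the well-definedness of $p_0$, is cleanly handled by Corollary \ref{Cov1}(ii). Note that different choices of the section $a \mapsto v_a$ will generally produce different covering morphisms $p$, which is consistent with the fact that covering morphisms between a given pair of graphs are typically non-unique.
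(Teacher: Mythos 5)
Your proof is correct and follows essentially the same strategy as the paper: $p_0$ is defined by factoring $\overline{\pi}_0$ through $(\pi_A)_0$, with well-definedness secured by Corollary \ref{Cov1}(ii) and the definition of the geometric Nerode congruence. The only (cosmetic) difference is in the edge map: you build $p_1$ concretely via a chosen section $a \mapsto v_a$ and the UPLP, whereas the paper argues via a cardinality count that such an extension exists by appealing back to Construction \ref{MinQuot}; both are sound.
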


\begin{proof} Let $\pi: T \to A$ be a covering morphism and let $\overline{\pi}: T \to \overline{A}$ be as in Construction \ref{example:pathlanguagetree}. If $v$ and $w$ are vertices of $T$ and $\pi_0(v) = \pi_0(w)$, then by Corollary \ref{Cov1}.(ii) we have $T_v \sim T_w$ and hence $\overline{\pi}_0(v) = \overline{\pi}_1(w)$. We thus find $p_0: Q \to \overline{Q}$ such that
\[
\overline{\pi}_0 = p_0 \circ \pi_0.
\]
Now let $q_1 \in Q$, let $v_1 \in V(T)$ be a lift of $q_1$ and set $\overline{q}_1 := p_0(q_1)$. Since $\pi$ and $\overline{\pi}$ are covering morphisms we then have
\[
|\overline{q}_1^+ \cap \overline{q}_2^+| = \left|\bigsqcup_{v_2 \in \overline{\pi}_0^{-1}(\overline{q}_2)} v_1^+ \cap v_2^- \right| = \left|\bigsqcup_{q_2 \in \pi_0(\overline{\pi}_0^{-1}(\overline{q}_2))} q_1^+ \cap q_2^- \right| = \left|\bigsqcup_{q_2 \in p_0^{-1}(\overline{q}_2))} q_1^+ \cap q_2^- \right|.
\]
for every $\overline{q}_2 \in \mathcal C(\overline{q}_1)$. We can thus extend $p_0$ to a covering morphism $p$ by the same argument as used in Construction \ref{MinQuot}.
\end{proof}
In the situation of Proposition \ref{MinCovQ1} we also say that $\overline{A}$ is the minimal covering quotient of $A$.
We emphasize that the covering morphism $p$ from Proposition \ref{MinCovQ1} is far from unique in general, even for $A = T$, and hence the minimal covering quotient of $T$ is \emph{not} the final object in the category of covering quotients of $T$. There is, however, the following weaker uniqueness property:
\begin{proposition}\label{MNUniqueness} Let $A$ be a locally finite connected graph with minimal covering quotient $\overline{A}$ and let $\pi, \pi': A \to \overline{A}$ be two coverings. Then $\pi_0 = \pi_0'$, i.e. $\pi$ and $\pi'$ agree on vertices (but not necessarily on edges).
\end{proposition}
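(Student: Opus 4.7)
The plan is to lift both coverings to the universal covering tree of $A$ and exploit the defining property of the minimal covering quotient as a geometric Nerode quotient.

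Let $\tau: T \to A$ denote the universal covering morphism. The first step is to observe that for any covering $\pi: A \to \overline{A}$, the composite $\pi \circ \tau: T \to \overline{A}$ is again a covering morphism (surjectivity of $(\pi \circ \tau)_0$ is immediate, and the UPLP of Proposition \ref{Covering1} propagates through compositions by lifting once through $\pi$ and then through $\tau$). By Remark \ref{UP}(iv), $T$ is therefore the universal covering tree of $\overline{A}$ as well. Moreover, for each $v \in V(T)$, restriction of $\pi \circ \tau$ to the cone subtree $T_v$ yields a covering $T_v \to \overline{A}_{\pi_0(\tau_0(v))}$ (by the construction ``Universal covering tree of a subgraph''), so $T_v \cong T_{\overline{A}_{\pi_0(\tau_0(v))}}$ as rooted trees.

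The crux of the argument is the following intrinsic property of the minimal covering quotient: two vertices $\overline{q}_1, \overline{q}_2 \in V(\overline{A})$ whose cone subgraphs have isomorphic universal covers must coincide. Indeed, by Construction \ref{MinQuot} we can write $\overline{q}_i = \overline{\pi}_0(w_i)$ for some $w_i \in V(T)$, and the universal covering morphism $\overline{\pi}: T \to \overline{A}$ restricts to the universal covering of $\overline{A}_{\overline{q}_i}$ on $T_{w_i}$, yielding $T_{w_i} \cong T_{\overline{A}_{\overline{q}_i}}$. The hypothesis then gives $T_{w_1} \cong T_{w_2}$, hence $w_1 \equiv w_2$ by the definition of the geometric Nerode congruence, and therefore $\overline{q}_1 = \overline{q}_2$ in $\overline{Q}$.

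Combining these, the element $\pi_0(\tau_0(v)) \in V(\overline{A})$ is characterized, independently of $\pi$, as the unique vertex $\overline{q}$ satisfying $T_{\overline{A}_{\overline{q}}} \cong T_v$. Consequently $\pi_0(\tau_0(v)) = \pi_0'(\tau_0(v))$ for every $v \in V(T)$, and since $\tau_0$ is surjective (because $\tau$ is a covering morphism), this upgrades to $\pi_0 = \pi_0'$ on all of $V(A)$. The main technical point to verify carefully is that covering morphisms really do restrict to the asserted coverings on cone subgraphs so that $T_v$ serves as the universal cover of $\overline{A}_{\pi_0(\tau_0(v))}$; once this is in place, the remainder of the argument is a clean bookkeeping exercise around the Nerode equivalence.
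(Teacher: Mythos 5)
Your proof is correct. It takes the same underlying approach as the paper (pass to the universal covering tree $T$ and invoke the geometric Nerode congruence), but it is organized differently: the paper normalizes by assuming WLOG that $\pi$ is the canonical Nerode quotient map from Construction \ref{MinQuot} and then uses Remark \ref{UP}(iv) to produce an automorphism $p\in\Aut(T)$ with $\pi'=\pi\circ p$, concluding from $p(v)\equiv v$ that $\pi_0'(v)=\pi_0(p(v))=\pi_0(v)$. You instead avoid choosing a preferred covering and avoid the automorphism trick entirely by isolating the intrinsic fact that distinct vertices of $\overline{A}$ have non-isomorphic cone universal covers; this then characterizes the vertex map of \emph{any} covering $T\to\overline{A}$ purely in terms of cone types of $T$, from which the equality $\pi_0=\pi_0'$ follows symmetrically. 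Your route is a bit longer to write out (it requires proving the separate key claim and checking that coverings restrict to coverings on cones), but it is more self-contained and treats $\pi$ and $\pi'$ on equal footing, which some may find conceptually cleaner than the paper's normalization argument.
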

\begin{proof} We may assume without loss of generality that $A=T$ is a tree. We may then assume that $\pi: T \to \overline{A}$ is given as in Construction \ref{MinQuot}. By Remark \ref{UP}.(iv) there is an automorphism $p \in \Aut(T)$ such that $\pi' = \pi \circ p$. Now if $v$ is a vertex of $T$, then $p$ induces an isomorphism $T_v \to T_{p(v)}$, and hence
$p(v) \equiv v$. This implies that 
\[
\pi_0(v) = \pi_0(p_0(v)) = \pi_0'(v).\qedhere
\]
\end{proof}
If we
denote by $v(A)$ and $e(A)$ the number of vertices and edges of a finite graph $A$ respectively, then minial graphs can also be characterized by the following minimality property:
\begin{corollary}\label{MinCovQ2} Let $A$ be a covering quotient of a self-similar tree $T$, and let $\overline{A}$ be the associated minimal covering quotient.
\begin{enumerate}[(i)]
\item $v(A) \geq v(\overline{A})$ and $e(A) \geq e(\overline{A})$
\item If $v(A) =  v(\overline{A})$ or $e(A) = e(\overline{A})$ then $A \cong \overline{A}$.
\end{enumerate}
\end{corollary}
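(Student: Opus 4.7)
The plan is to invoke Proposition~\ref{MinCovQ1} to fix a covering morphism $p \colon A \to \overline{A}$ and then read off both parts by tracking the fibres of the vertex map $p_0$.

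For part (i), the vertex inequality $v(A) \geq v(\overline{A})$ is immediate from surjectivity of $p_0$. For the edge inequality I would apply Corollary~\ref{CoveringCrit}: since $p$ is a covering morphism, for every $q \in Q(A)$ the map $p_1$ restricts to a bijection $q^+ \to p_0(q)^+$, and in particular $|q^+| = |p_0(q)^+|$. Summing over $Q(A)$ and regrouping by fibres of $p_0$ I obtain
\[
e(A) \;=\; \sum_{q \in Q(A)} |q^+| \;=\; \sum_{\bar q \in Q(\overline{A})} |p_0^{-1}(\bar q)| \cdot |\bar q^+| \;\geq\; \sum_{\bar q \in Q(\overline{A})} |\bar q^+| \;=\; e(\overline{A}),
\]
where the inequality uses $|p_0^{-1}(\bar q)| \geq 1$ for every $\bar q \in Q(\overline{A})$ by surjectivity of $p_0$.

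For part (ii) the strategy in both subcases is to promote the chain of inequalities to an injectivity statement for $p_0$ and then to finish by Lemma~\ref{InjectiveCovering}, which upgrades an injective covering of connected graphs to an isomorphism. If $v(A) = v(\overline{A})$, then $p_0$ is a surjection between finite sets of equal cardinality and is therefore bijective; in particular $p_0$ is injective and Lemma~\ref{InjectiveCovering} directly yields $A \cong \overline{A}$. If instead $e(A) = e(\overline{A})$, equality in the displayed chain forces
\[
(|p_0^{-1}(\bar q)| - 1) \cdot |\bar q^+| \;=\; 0 \qquad \text{for every } \bar q \in Q(\overline{A}),
\]
so $|p_0^{-1}(\bar q)| = 1$ at every vertex of $\overline{A}$ with at least one outgoing edge.

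The main obstacle I foresee is the edge case over vertices of $\overline{A}$ with out-degree $0$: these contribute $0$ to both sides of the edge count and are thus invisible to the equality. To conclude full injectivity of $p_0$ I would combine the injectivity already established on the subgraph spanned by vertices of positive out-degree with the connectedness of $A$ and the UPLP: every terminal state of $\overline{A}$ is reached by a path whose source has positive out-degree and hence a unique lift in $A$, and the UPLP then forces the terminal fibre to be a singleton as well. Once $p_0$ is injective everywhere, Lemma~\ref{InjectiveCovering} closes the argument.
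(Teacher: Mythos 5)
Your treatment of part (i) and of the vertex branch of (ii) coincides with the paper's argument: obtain a covering $p \colon A \to \overline{A}$ from Proposition~\ref{MinCovQ1}, note that $p_0$ is onto and that $p_1$ restricts to a bijection $q^+ \to p_0(q)^+$ at every vertex (this gives the two inequalities in (i) via the fibre count you display), and apply Lemma~\ref{InjectiveCovering} when $v(A) = v(\overline{A})$. You also correctly identify the real subtlety in the edge branch, which the paper dismisses in a single clause (``\dots and implies the statement for edges''): equality of edge counts only pins down $|p_0^{-1}(\bar q)| = 1$ at vertices $\bar q$ with $\bar q^+ \neq \emptyset$, and says nothing about fibres over sinks.

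Your proposed repair of that gap, however, does not work. The UPLP produces a unique lift of a \emph{fixed} path from a \emph{fixed} source, but a sink $\bar q$ of $\overline{A}$ can be the target of several edges, or of several paths from the root, and these lift to edges/paths in $A$ with potentially different endpoints; nothing in your argument collapses $p_0^{-1}(\bar q)$ to a singleton. In fact the edge branch of (ii) fails as stated: take for $T$ the self-similar tree with root $r$ and two leaves $c_1, c_2$, and let $A = T$. Its minimal covering quotient $\overline{A}$ has two vertices (the Nerode class of $r$ and the Nerode class of a leaf) joined by a double edge, so $e(A) = e(\overline{A}) = 2$. The lone non-sink fibre $p_0^{-1}([r]) = \{r\}$ is a singleton, exactly as your edge-count computation predicts, but the sink fibre is $p_0^{-1}([c_1]) = \{c_1, c_2\}$, so $v(A) = 3 > 2 = v(\overline{A})$ and $A \not\cong \overline{A}$. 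So neither your repair nor the paper's terse justification closes this gap; the edge statement holds only under the additional hypothesis that $\overline{A}$ has no sinks (equivalently, that $T$ is leafless), in which case your edge-count argument already forces $p_0$ to be injective everywhere.
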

\begin{proof} By Proposition \ref{MinCovQ1} we have a covering morphism $p: A \to \overline{A}$. Then (i) is immediate from the fact that covering morphisms are surjective on vertices and preserve the number of outgoing edges per vertex. The statement about vertices in (ii) follows from Lemma \ref{InjectiveCovering} and implies the statement for edges.
\end{proof}
In some small cases this criterion is enough to determine the minimal covering quotient:
\begin{example}[Small example]\label{SmallExamples} The oriented rose on $r$ petals is the minimal covering quotient of the $r$-regular rooted tree. The minimal covering quotient of a non-regular tree must have at least two vertices. 

The non-regular tree on the right of Figure \ref{fig:example1} has a minimal covering quotient with exactly two vertices; it can be obtained from the graph on the left by identifying all vertices except for the root. Its universal covering morphism maps all vertices of the tree except for the root to the non-root vertex.
\end{example}
\begin{remark}[Geometrically minimal DFAs]
If $T$ is a tree and $\mathcal A$ is any DFA whose underlying graph is a covering quotient of $T$, then $T$ is isomorphic to the path language tree of $\mathcal A$; we then say that $\mathcal A$ \emph{generates} $T$. We say that a DFA $\mathcal A$ is \emph{geometrically minimal} if its underlying graph $A$ is a minimal graph (and hence a minimal covering quotient of the tree that $\mathcal A$ generates).

It then follows from Corollary \ref{MinCovQ2} that a DFA is geometrically minimal if and only if it has the minimal number of vertices (or edges) among all automata which generate the same tree.
\end{remark}
Recall that a DFA is \emph{minimal} in the sense of \cite{BerstelBoassonCartonFagnot21} if has the smallest number of vertices (or edges) among all automata which generate the same \emph{labelled} tree. If $\mathcal A$ is a minimal DFA with underlying graph $A$, then $A$ is a covering quotient of the tree $T$ generated by $\mathcal A$, and hence there exists a covering morphism from $A$ to the minimal covering quotient $\overline{A}$ of $T$. By definition, the minimal automaton $\mathcal A$ is geometrically minimal if and only if the covering morphism $A \to \overline{A}$ is an isomorphism; the following example shows that this need not be the case in general.
\begin{example} \label{exam:geometric.vs.language.myhill.nerode}
Consider the DFA depicted on the left of Figure \ref{fig:example1}. The states $Y$ and $Z$ are clearly Nerode equivalent, but no other states are Nerode equivalent since they do not have the same set of labels among their outgoing edges. A minimal DFA for the labelled tree on the right of Figure \ref{fig:example1} is thus obtained by identifying the states $Y$ and $Z$ and has four states. Since the minimal covering quotient of the tree has only two vertices by Example \ref{SmallExamples}, this minimal DFA is not geometrically minimal.
\end{example}
From Proposition \ref{MNUniqueness} we can compute the automorphism group of a minimal graph; this will become important later.
\begin{remark}[Edge automorphism group]\label{EdgeAutomorphisms} Given a graph $A$, the action of $\Aut(A)$ on the vertex set $Q$ of $A$ yields a homomorphism
$\Aut(A) \to \Sym(Q)$. We refer to automorphisms in the  kernel $\Aut_0(A) \lhd \Aut(A)$ of this homomorphism as \emph{edge automorphisms}. If $q_1, q_2 \in Q$, then every permutation of ${q}_1^+ \cap {q}_2^-$ extends to an edge automorphism by fixing all vertices and all edges outside of  ${q}_1^+ \cap {q}_2^-$. This provides an isomorphism
\[
\prod_{q_1, q_2 \in {Q}} \mathrm{Sym}({q}_1^+ \cap {q}_2^-) \to \Aut_0(A)
\]
\end{remark}
\begin{corollary}\label{AutEdge}
    If $\overline{A}$ is a minimal graph, then $\Aut(\overline{A}) = \Aut_0(\overline{A})$, i.e.\ every automorphism is an edge automorphism.
\end{corollary}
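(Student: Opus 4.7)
The plan is to derive the corollary almost immediately from Proposition \ref{MNUniqueness}, which says that any two covering morphisms from a locally finite connected graph to its minimal covering quotient agree on vertices. The strategy is to manufacture two such coverings out of a given automorphism.

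More precisely, I would argue as follows. Let $T$ denote the universal covering tree of $\overline{A}$ (which is a self-similar tree of which $\overline{A}$ is the minimal covering quotient in the sense of Construction~\ref{MinQuot}), and let $\overline{\pi}: T \to \overline{A}$ be the universal covering morphism. Now let $g \in \Aut(\overline{A})$ be arbitrary. Then $g \circ \overline{\pi}: T \to \overline{A}$ is also a covering morphism, because it is the composition of a covering morphism with an isomorphism. Thus $\overline{\pi}$ and $g \circ \overline{\pi}$ are two covering morphisms from the locally finite connected graph $T$ onto its minimal covering quotient $\overline{A}$.

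Applying Proposition \ref{MNUniqueness} to these two covering morphisms yields
\[
\overline{\pi}_0 \;=\; (g \circ \overline{\pi})_0 \;=\; g_0 \circ \overline{\pi}_0.
\]
Since $\overline{\pi}$ is a covering morphism, $\overline{\pi}_0$ is surjective onto the vertex set of $\overline{A}$, so we may cancel it on the right to conclude $g_0 = \mathrm{id}$. This means $g$ fixes every vertex of $\overline{A}$, i.e.\ $g \in \Aut_0(\overline{A})$, which is exactly what we wanted to show.

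There is no real obstacle here; the only non-trivial step is the invocation of Proposition \ref{MNUniqueness}, and we just need to be careful to remember that an automorphism of $\overline{A}$ is allowed to permute the multiple edges between a pair of vertices (this is why we land in $\Aut_0(\overline{A})$ rather than in the trivial group). The freedom to permute edges between the same pair of vertices corresponds precisely to the non-uniqueness on edges of the covering $\pi$ in Proposition \ref{MNUniqueness}, so the two statements fit together consistently.
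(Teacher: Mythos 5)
Your proof is correct and rests on the same key ingredient as the paper's, namely Proposition \ref{MNUniqueness}. The only cosmetic difference is that you apply that proposition to the two coverings $\overline{\pi}$ and $g\circ\overline{\pi}$ emanating from the universal covering tree $T$, whereas the paper observes directly that every automorphism $g$ of $\overline{A}$ is itself a covering morphism $\overline{A}\to\overline{A}$ and compares it with the identity covering via the same proposition.
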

\begin{proof} Every automorphism is a covering, hence all automorphisms of $\overline{A}$ act the same on vertices by Proposition \ref{MNUniqueness}. Since the identity acts trivially on vertices, the corollary follows.
\end{proof}
\begin{figure}
   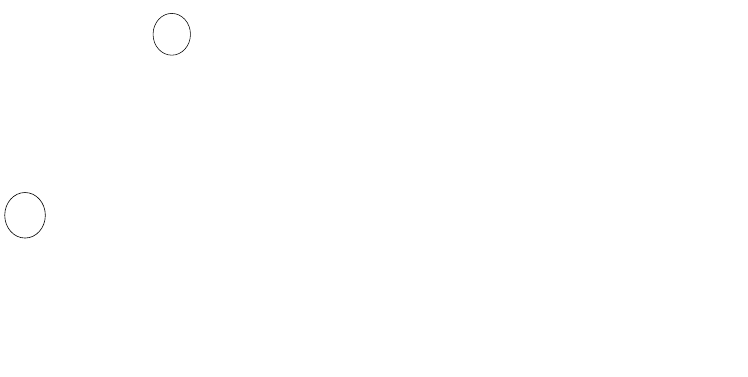
    \caption{A finite connected (labelled) graph and its universal (labelled) covering tree}
    \label{fig:example1}
\end{figure}
\begin{example} \label{exam:minimal.vs.geometrically.minimal}
The converse of Corollary \ref{AutEdge} is not true, i.e.\ there exists finite connected graphs whose automorphism group acts trivially on vertices, but which are not minimal.

Figure \ref{fig:example1} depicts a graph $A$ and its universal covering tree $T$. The minimal covering quotient of $T$ has two vertices, since all vertices of $T$ except for the root are geometrically Nerode equivalent. Consequently, $A$ is not minimal.

On the other hand, every automorphism of $A$ has to fix $\epsilon$ as no other vertex has four outgoing edges. The vertex $B$ is the unique one with exactly two incoming edges and so it has to be fixed as well. $B$ `points' to $A$, which thus has to be fixed. $A$ `points' to $C$, so $C$ has to be fixed. Now $D$ has to be fixed as well, and hence every automorphism of $A$ is an edge automorphism.
\end{example}

This means that we cannot construct the minimal covering quotient of a finite connected graph $A$ by simply identifying vertices in a common orbit of the automorphism group. This raises the question on how to algorithmically construct minimal covering quotients. We will adress this in the next subsection.

\subsection{Moore's algorithm revisited}\label{subsec:Moore.revisited}
Moore's algorithm is a famous algorithm which constructs the minimal DFA of a regular language out of a given accepting automaton. In this subsection we explain a \emph{geometric} version of Moore's algorithm which works in the unlabelled setting.

Thus let $A = (Q, \cE, s,t, \epsilon)$ be a finite connected graph with universal covering tree $T$ and let $\overline{A}$ be its minimal covering quotient.
\begin{remark}[Geometric Nerode equivalence]
We explain how the graph $\overline{A}$ can be constructed (up to isomorphism) from $A$. For this we fix covering morphisms
\[
T \xrightarrow{\pi} A \xrightarrow{p} \overline{A}.
\]
and set $\overline{\pi} := p \circ \pi$. 
By Proposition \ref{MNUniqueness} we then have $\overline{\pi}_0(v) = [v]$, where $[v]$ denotes the geometric Nerode equivalence class of $v$. In particular, if $v,w \in V(T)$ with $\pi_0(v) = \pi_0(w)$, then $v \equiv w$. We deduce that if $q_1$ and $q_2$ are vertices of $A$ and $v_j, w_j \in \pi_0^{-1}(q_j)$, then $v_1 \equiv v_2 \Leftrightarrow w_1 \equiv w_2$. We thus obtain an equivalence relation (also denoted $\equiv$) on $q$ by setting
\begin{equation}
q_1 \equiv q_2 \iff v_1 \equiv v_2 \text{ for some (hence any) } v_j \in \pi_0^{-1}(q_j).
\end{equation}
We refer to $\equiv$ as the \emph{geometric Nerode equivalence relation} on $A$. 

Note that $\overline{A}$ is determined up to isomorphism by the pair $(A, \equiv)$: The vertex set of $\overline{A}$ is given by $Q/\equiv$, and two vertices $[q_1]$ and $[q_2]$ are connected by as many oriented edges as there are oriented edges from $q_1$ to some $q_2'$ with $q_2' \equiv q_2$. Thus in order to construct $\overline{A}$, we need to determine the geometric Nerode equivalence relation on $A$.
\end{remark}
To compute the geometric Nerode equivalence relation in practice, it suffices to compute certain approximations, which we now define. We fix a section 
\[
\sigma: Q \to V(T), \quad q \mapsto \widetilde{q}
\]
of $\pi_0$; then the isomorphism class of $T_{\widetilde{q}}$ depends only on $q$, and 
\begin{equation}\label{Ner}
q_1 \equiv q_2 \iff T_{\widetilde{q}_1} \cong T_{\widetilde{q}_2}.
\end{equation}
For every $d \in \mathbb N_0$ we now define a finite subtree $T_{\widetilde{q}}^{(d)}$ of $T_{\widetilde{q}}$ as the full subtree on those vertices of $T_{\widetilde{q}}$ of distance at most $d$ from the root. We then define a family of equivalence relations by
\begin{equation}\label{Nerd}
q_1 \equiv_d q_2 \iff T_{\widetilde{q}_1}^{(d)} \cong T_{\widetilde{q}_2}^{(d)}.
\end{equation}
\begin{figure}
   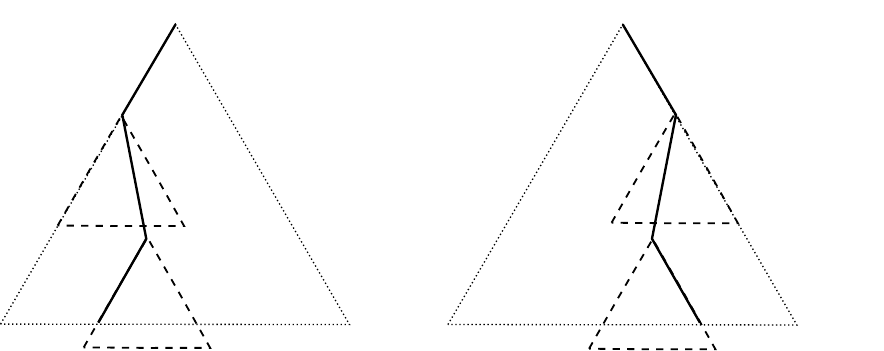
    \caption{A schematic depicting how we extend $\varphi$ to the children of the leaf $\ell$. The dashed triangles depict the subtrees $(T_i)_{\gamma_i(t_j)}^{d - t_2 + 1}$ of depth $d - t_2 + 1$, which are all isomorphic.}
     \label{fig:Moore.algorithm}
\end{figure}
\begin{lemma} \label{lem:finite.levels.determine.full.subtree} For vertices $q_1, q_2 \in Q$ the following are equivalent:
\begin{enumerate}[(i)]
\item $q_1 \equiv q_2$.
\item $q_1 \equiv_d q_2$ for all $d \in \bN$.
\item $q_1 \equiv_d q_2$ for some $d \geq (|Q|+1)^2$.
\end{enumerate}
\end{lemma}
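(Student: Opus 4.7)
My plan is as follows. The implications (i)$\Rightarrow$(ii)$\Rightarrow$(iii) are immediate, since an isomorphism $T_{\widetilde{q}_1} \cong T_{\widetilde{q}_2}$ restricts to isomorphisms of all finite depth-$d$ truncations, and (iii) is a single instance of (ii). The substantive content is (iii)$\Rightarrow$(i), which I propose to handle in two stages: first show that the sequence of equivalence relations $\equiv_d$ stabilizes very quickly to a common refinement $\equiv^*$, so that (iii) already forces (ii); then show $\equiv^* = \equiv$ by building an isomorphism level by level.

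The first stage rests on the recursive characterization that $q_1 \equiv_{d+1} q_2$ if and only if for every $\equiv_d$-class $C \subseteq Q$ the numbers $|\{e \in q_i^+ \mid t(e) \in C\}|$ agree for $i=1,2$. This follows by decomposing $T_{\widetilde{q}_i}^{(d+1)}$ as the root together with the subtrees $T_{\widetilde{c}_e}^{(d)}$ indexed by the out-edges $e \in q_i^+$, and invoking Corollary \ref{Cov1}.(ii) to ensure that the isomorphism class of each such subtree depends only on the image $c_e \in Q$. The relations $\equiv_d$ thus form a decreasing chain of partitions of the finite set $Q$, and the recursive characterization makes clear that once $\equiv_d = \equiv_{d+1}$ for some $d$, the chain is constant from then on. Stabilization therefore occurs after at most $|Q| - 1$ strict refinements, so \emph{a fortiori} $\equiv_d = \equiv^*$ for every $d \geq (|Q|+1)^2$, giving (iii)$\Rightarrow$(ii).

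For (ii)$\Rightarrow$(i), given $q_1 \equiv^* q_2$ I would construct an isomorphism $\varphi: T_{\widetilde{q}_1} \to T_{\widetilde{q}_2}$ as the union of a compatible family $\varphi_d: T_{\widetilde{q}_1}^{(d)} \to T_{\widetilde{q}_2}^{(d)}$ satisfying the invariant $\pi_0(v) \equiv^* \pi_0(\varphi_d(v))$ for every vertex $v$. The base case sets $\varphi_0(\widetilde{q}_1) = \widetilde{q}_2$. For the inductive step, at each leaf $\ell$ of $T_{\widetilde{q}_1}^{(d)}$ the invariant gives $\pi_0(\ell) \equiv^* \pi_0(\varphi_d(\ell))$, so the recursive characterization applied to the stable relation $\equiv^*$ provides a bijection between the out-edges at these two images respecting $\equiv^*$-classes of targets. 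This bijection lifts along $\pi$ to a type-preserving bijection between the children of $\ell$ and those of $\varphi_d(\ell)$, yielding $\varphi_{d+1}$ with the invariant intact. The union over $d$ is the desired isomorphism.

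The main obstacle is the extension step of the inductive construction: a priori, choices of bijections at different leaves could conflict, but because the subtrees at distinct leaves are disjoint, the extensions can be chosen independently. This is precisely why the detour through the stable relation $\equiv^*$ is necessary: without stability, the recursive characterization at depth $d$ would provide only matchings that do not persist to deeper levels, and the inductive extension would break down.
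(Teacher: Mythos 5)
Your proof is correct and takes a genuinely different route from the paper's for (iii)$\Rightarrow$(i). The paper works directly with an isomorphism $\phi_d \colon T_{\widetilde{q}_1}^{(d)} \to T_{\widetilde{q}_2}^{(d)}$ and extends it one level at a time: for each leaf $\ell$ it projects the paired geodesics $(\gamma_1, \phi_d \circ \gamma_1)$ to $A \times A$, uses $d > |Q|^2$ to find indices $0 < t_1 < t_2 \le d-1$ with $\pi(\gamma_j(t_1)) = \pi(\gamma_j(t_2))$ for $j=1,2$, and then splices the deeper restriction of $\phi_d$ from the subtree rooted at $\gamma_1(t_1)$ onto the one rooted at $\gamma_1(t_2)$, gaining one level past $\ell$. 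You instead interpose (iii)$\Rightarrow$(ii) as a separate phase, observing that the decreasing chain of partitions $\equiv_0 \supseteq \equiv_1 \supseteq \cdots$ of the finite set $Q$ satisfies a one-step recursion (``$q_1 \equiv_{d+1} q_2$ iff the out-edges of $q_1$ and $q_2$ hit each $\equiv_d$-class equally often'') and hence stabilizes after at most $|Q|-1$ strict refinements, staying constant thereafter; this settles (iii)$\Rightarrow$(ii) immediately and in fact sharpens the threshold from $(|Q|+1)^2$ to $|Q|-1$. The remaining implication (ii)$\Rightarrow$(i) is then a type-preserving level-by-level extension that needs no splicing, because the stabilized relation already matches children consistently at every depth. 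Your route is cleaner, yields a sharper quantitative bound, sidesteps the compatibility questions lurking in the paper's re-definition of $\phi_d$ on subtrees, and --- not coincidentally --- is exactly the viewpoint the paper formalizes one subsection later as the geometric Moore algorithm (Definition~\ref{def:Moore} and the proposition following it), so you have effectively anticipated that machinery inside this lemma; the paper's approach avoids setting up the refinement fixed point first, at the cost of a looser bound and a more delicate extension step.
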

\begin{proof} The equivalence (i)$\Leftrightarrow$(ii) is immediate from \eqref{Ner} and \eqref{Nerd}, and the implication (ii)$\Rightarrow$(iii) is obvious. 

\item To prove (iii)$\implies$(i) we denote $v_1 := \widetilde{q}_1$ and $v_2 := \widetilde{q}_2$ and abbreviate $T_1 := T_{v_1}$ and $T_2 := T_{v_2}$. We assume that for some $d \geq (|Q|+1)^2$ we are given an isomorphism $
\phi_d: T_1^{(d)}  \to T_{2}^{(d)}$.
We then construct an isomorphism $\phi_{d+1}:  T_{1}^{(d+1)}  \to T_{2}^{(d+1)}$
as follows: 

\item We pick a leaf $l$ of $T_{v_1}^{(d)}$ and denote by $\gamma_1$ the unique geodesic from $v_1$ to $l$, so that $\gamma_1(0) = v_1$ and $\gamma_1(d) = l$. Then $\gamma_2 := \phi \circ \gamma_1$ is the unique geodesic from $v_2$ to $\phi(l)$. We now consider the geodesic $\gamma(t) = (\gamma_1(t), \gamma_2(t))$ and its image $\pi\circ \gamma$ in $A \times A$. Since $A \times A$ has $|Q|^2$ vertices and $d \geq (|Q|+1)^2$, there exist $0< t_1 < t_2 \leq d-1$ such that 
\[
\pi(\gamma_1(t_1)) = \pi(\gamma_1(t_2)) \qand \pi(\gamma_2(t_1)) = \pi(\gamma_2(t_2)). 
\]
This means that we have isomorphisms
\[
(T_1)_{\gamma_1(t_1)} \cong (T_1)_{\gamma_1(t_2)} \qand (T_2)_{\gamma_1(t_1)} \cong (T_1)_{\gamma_1(t_2)}.
\]
Now we observe that $\phi_d$ induces isomorphisms
\[
(T_1)_{\gamma_1(t_1)}^{(d-t_1)} \to (T_2)_{\gamma_2(t_1)}^{(d-t_1)} \qand (T_1)_{\gamma_1(t_2)}^{(d-t_2)} \to (T_2)_{\gamma_2(t_2)}^{(d-t_2)}
\]
Since $d-t_1 \geq d-t_2+1$, we can replace $\phi_d$ on the subtree $(T_1)_{\gamma_1(t_2)}^{(d-t_2+1)}$ by the map
\[
\widetilde{\phi}_d: (T_1)_{\gamma_1(t_2)}^{(d-t_2+1)} \to (T_1)_{\gamma_1(t_1)}^{(d-t_2+1)} \xrightarrow{\phi_d} (T_2)_{\gamma_2(t_1)}^{(d-t_2+1)} \to (T_2)_{\gamma_2(t_2)}^{(d-t_2+1)}.
\]
This extends the isomorphism $\phi_d$ to the children of the leaf $\ell$. (In fact, it extends $\phi_d$ to the level $d+1$ intersected with the subtree $T_{\gamma_1(t_2)}$.) After finitely many such modifications we have extended $\phi_d$ to the entire level $d+1$, i.e.\ we have constructed $\phi_{d+1}$.
\end{proof}
\begin{corollary}\label{MooreExist} There exists an algorithm which computes $\overline{A}$ from $A$.
\end{corollary}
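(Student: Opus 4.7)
The plan is to compute the geometric Nerode equivalence relation $\equiv$ on $Q$ explicitly, and then assemble $\overline{A}$ from the quotient $Q/\equiv$ as described at the end of the preceding subsection. The key idea is that the relations $\equiv_d$ from \eqref{Nerd} admit a local recursive description that makes them computable from $A$ alone, and by Lemma \ref{lem:finite.levels.determine.full.subtree} the relation $\equiv$ is reached after boundedly many steps.

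First, I would observe that $\equiv_0$ is the total relation (every depth-$0$ subtree is a single root), hence trivially computable. For the inductive step, note that for any vertex $q \in Q$ with lift $\widetilde{q}$, the truncated subtree $T_{\widetilde{q}}^{(d+1)}$ is obtained from the root $\widetilde{q}$ by attaching, for each outgoing edge $e \in q^+$ in $A$, a copy of $T_{\widetilde{t(e)}}^{(d)}$ along $e$. Consequently the isomorphism class of $T_{\widetilde{q}}^{(d+1)}$ is determined by the multiset $P_d(q) := \{\!\{\,[t(e)]_{\equiv_d} : e \in q^+\,\}\!\}$, and
\[
q_1 \equiv_{d+1} q_2 \iff P_d(q_1) = P_d(q_2).
\]
Thus once $\equiv_d$ is known, $\equiv_{d+1}$ is computed by inspecting the finitely many outgoing edges at each vertex of $A$.

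Since $\equiv_{d+1}$ refines $\equiv_d$ and each is an equivalence relation on the finite set $Q$, the sequence $(\equiv_d)_{d \geq 0}$ stabilizes after at most $|Q|-1$ proper refinement steps; in any case, iterating up to $d_0 := (|Q|+1)^2$ suffices by Lemma \ref{lem:finite.levels.determine.full.subtree} (indeed, once $\equiv_d = \equiv_{d+1}$ the recursion forces $\equiv_{d+k} = \equiv_d$ for all $k$, and taking $d+k \geq d_0$ yields $\equiv_d = \equiv$). The algorithm therefore terminates after at most $d_0$ iterations with the correct relation $\equiv$.

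Finally, having computed $\equiv$, I would output the graph whose vertex set is $\overline{Q} := Q/\equiv$, rooted at $[\epsilon]$, with $|\{\,e \in q_1^+ : t(e) \equiv q_2\,\}|$ directed edges from $[q_1]$ to $[q_2]$; as noted in Construction \ref{MinQuot}, this edge count is independent of the chosen representative $q_1$, and the resulting graph is isomorphic to $\overline{A}$. There is no genuine obstacle here: the only substantive point is the local recursive description of $\equiv_d$ in terms of $\equiv_{d-1}$, which is forced by the construction of $T$ as the universal covering tree of $A$.
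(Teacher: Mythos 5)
Your argument is correct, but it takes a different (and arguably better) route than the paper's own proof of this corollary. The paper proves Corollary~\ref{MooreExist} by brute force: compute the truncated trees $T^{(d)}_{\widetilde q}$ up to depth $d = (|Q|+1)^2$ for each state, then test pairwise isomorphism directly; Lemma~\ref{lem:finite.levels.determine.full.subtree} guarantees that this depth suffices. What you have written instead is essentially the \emph{geometric Moore algorithm}, which the paper develops immediately afterwards (Definition~\ref{def:Moore} and the proposition following it) as the efficient refinement of the brute-force argument. Your recursive characterization
\[
q_1 \equiv_{d+1} q_2 \iff P_d(q_1) = P_d(q_2)
\]
is exactly what the paper proves there (phrased via partition refinement by the partitions $(P,n)$), and its correctness hinges on the fact that the depth-$(d+1)$ truncation at $\widetilde q$ is a cone over depth-$d$ truncations at the children, whose isomorphism types depend only on $\pi_0$ of those children -- this is Corollary~\ref{Cov1}.(ii), which you invoke somewhat implicitly via ``forced by the universal covering tree construction'' but should cite. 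One small gain of your route: because $(\equiv_d)$ is a chain of refinements on the finite set $Q$, your stabilization argument shows the fixed point is reached after at most $|Q|-1$ refinement steps, which improves the paper's depth bound $(|Q|+1)^2$ to $|Q|$; you mention this but then needlessly fall back to $(|Q|+1)^2$. On the other hand, note that the chain-stabilization argument alone already forces $\equiv_d = \equiv$ once the chain stabilizes (since $\equiv = \bigcap_d \equiv_d$ by Lemma~\ref{lem:finite.levels.determine.full.subtree}.(i)$\Leftrightarrow$(ii)), so you do not actually need the quantitative bound from part (iii) of that lemma at all.
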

\begin{proof} For all $q_1, q_2 \in Q$ we can compute the trees $T^{(d)}_{\widetilde{q}_1}$ and $T^{(d)}_{\widetilde{q}_2}$ for all $d \leq (|Q|+1)^2$. We can then decide in finite time (e.g.\ by brute force) if these finite trees are isomorphic, and hence if $q_1 \equiv q_2$. This is sufficient to construct $\overline{A}$.
\end{proof}
Of course the algorithm outlined in the proof of Corollary \ref{MooreExist} is not efficient. We briefly indicate what a more efficient algorithm could look like. As we do so, we closely follow the presentation of the classical Moore algorithm in \cite{BerstelBoassonCartonFagnot21}.
\begin{notation}
We are going to work with partitions of $Q$ rather than equivalence relations on $Q$. Given a finite set of partitions $\mathcal{P}_1, \dots \mathcal{P}_n$ of $Q$, we denote by \[ \bigwedge_{i=1}^n \mathcal{P}_i := \left\{ \bigcap_{i=1}^n P_i \vert P_i \in \mathcal{P}_i \right\} \setminus\{\emptyset\}, \]
their coarsest common refinement. Given a subset $P \subset Q$ and a natural number $n \in \bN_0$ we define a partition
\[
(P,n) := \left\{\left\{q \in Q \mid \sum_{p \in P}|q^+ \cap p^{-}| = n\right\}, \left\{q \in Q \mid \sum_{p \in P}|q^+ \cap p^{-}| \neq n\right\}\right\}.
\]
Note that this partition is only non-trivial if $n \leq |Q|$.
\end{notation}

\begin{definition}\label{def:Moore}
The \emph{geometric Moore algorithm} is defined as follows:
\begin{enumerate}
    \item Set $\mathcal{P}_{0}$ to be the partition of $Q$ consisting of the single element $Q$.

    \item For $d=1$ to $(|Q|+1)^2$ do the following.
    \item For all $n\leq |Q|$ define
    \[
    \mathcal P_{n,d} := \bigwedge_{P \in\mathcal P_{d-1}}(P,n).
    \]
    \item Define
    \[
    \mathcal P_d := \mathcal P_{d-1} \wedge \bigwedge_{n=0}^{|Q|} \mathcal P_{n,d}.
    \]
    \item Return $\mathcal P_{(|Q|+1)^2}$.
\end{enumerate}
\end{definition}
\begin{proposition} The geometric Moore algorithm computes the partition induced by the geometric Nerode equivalence relation on $Q$.
\end{proposition}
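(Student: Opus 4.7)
The plan is to prove by induction on $d \in \mathbb{N}_0$ that $\mathcal P_d$ coincides with the partition of $Q$ induced by $\equiv_d$; combined with Lemma \ref{lem:finite.levels.determine.full.subtree}, taking $d = (|Q|+1)^2$ then yields the proposition. The base case $d=0$ is immediate: the truncated tree $T^{(0)}_{\widetilde q}$ consists of a single vertex for every $q \in Q$, so $\equiv_0$ is the total relation on $Q$, matching $\mathcal P_0 = \{Q\}$.

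For the inductive step I would first unwind the algorithm to describe $\mathcal P_d$ concretely: two vertices $q_1, q_2$ share a block of $\mathcal P_d$ if and only if (a) they share a block of $\mathcal P_{d-1}$, and (b) for every block $P$ of $\mathcal P_{d-1}$ one has $\sum_{p \in P}|q_1^+ \cap p^-| = \sum_{p \in P}|q_2^+ \cap p^-|$. Condition (a) comes from the explicit refinement $\mathcal P_d \leq \mathcal P_{d-1}$ in the definition; condition (b) comes from the wedge $\bigwedge_{n \leq |Q|}(P,n)$, which for fixed $P$ records the exact value of that sum (values above $|Q|$ cannot occur since $A$ has at most $|Q|$ outgoing edges from any vertex), further wedged over $P \in \mathcal P_{d-1}$.

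Next I would give a matching description of $\equiv_d$. Via the universal covering $\pi: T \to A$ (Proposition \ref{Covering1}), the children of $\widetilde q_i$ biject with the outgoing edges at $q_i$, and for a child $u$ of $\widetilde q_i$ lying above an edge $q_i \to q'$ we have $T_u \cong T_{\widetilde{q'}}$ by Corollary \ref{Cov1}(ii). Hence $T^{(d)}_{\widetilde q_1} \cong T^{(d)}_{\widetilde q_2}$ if and only if there is a bijection between the children of $\widetilde q_1$ and those of $\widetilde q_2$ under which the depth-$(d-1)$ subtrees agree, i.e.\ if and only if for every $\equiv_{d-1}$-class $P$ the number of children of $\widetilde q_i$ projecting to $P$ is independent of $i$; by the covering property this number is precisely $\sum_{p \in P}|q_i^+ \cap p^-|$. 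Applying the induction hypothesis to identify the blocks of $\mathcal P_{d-1}$ with the $\equiv_{d-1}$-classes, this is exactly condition (b), and (a) follows because matching counts over the finer partition $\mathcal P_{d-1}$ forces matching counts over the coarser partition $\mathcal P_{d-2}$ and hence $q_1 \equiv_{d-1} q_2$ by the inductive description applied at level $d-1$.

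The main obstacle I foresee is matching these two descriptions at every level cleanly, making sure that the combinatorial condition on edge counts between $\mathcal P_{d-1}$-blocks is genuinely equivalent to the recursive condition on isomorphism of depth-$(d-1)$ subtrees. The key enabling point, which should be emphasized in the write-up, is that lifts of a common vertex of $A$ produce isomorphic subtrees in $T$, so the depth-$(d-1)$ isomorphism type of each child subtree depends only on the $\equiv_{d-1}$-class of its image in $A$; with that established the rest is bookkeeping plus Lemma \ref{lem:finite.levels.determine.full.subtree}.
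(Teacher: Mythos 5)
Your proof follows the same inductive strategy as the paper's: show by induction on $d$ that $\mathcal P_d$ coincides with the $\equiv_d$-partition, using the observation that $T^{(d)}_{\widetilde q_1} \cong T^{(d)}_{\widetilde q_2}$ iff the children counts of $\widetilde q_1$ and $\widetilde q_2$ agree in each $\equiv_{d-1}$-class, which lets you pair children type-by-type and extend to depth $d-1$ by the induction hypothesis. Two small points are worth fixing in a final write-up. First, your argument that condition (b) implies (a) is not tight as stated: the ``inductive description at level $d-1$'' also requires $q_1 \equiv_{d-2} q_2$, not merely matching counts over $\mathcal P_{d-2}$, so a single step of coarsening does not suffice — you would need to descend to level $0$ and climb back up. It is cleaner, and closer to what the paper does, to drop the detour entirely: prove $\equiv_d \Leftrightarrow ((\text{a})\wedge(\text{b}))$ directly, since $\equiv_d \Rightarrow \equiv_{d-1}$ (i.e.\ (a)) is immediate by truncation and the pairing construction shows (b) $\Rightarrow \equiv_d$. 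Second, the parenthetical claim that ``values above $|Q|$ cannot occur since $A$ has at most $|Q|$ outgoing edges from any vertex'' is false: $A$ is a multigraph (the oriented rose on $r > |Q|$ petals is a minimal graph), so out-degrees are bounded by $|\mathcal E|$, not $|Q|$. This is an oversight you inherit from the algorithm's cap $n \leq |Q|$ in the paper — the correct bound is $n \leq \max_q |q^+|$ — but you should not supply an incorrect justification for it.
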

\begin{proof} We will show by induction that $\mathcal P_d$ is the partition induced by the relation $\equiv_d$ on $Q$; this is clear for $d = 0$. 

\item Thus assume that $\mathcal P_d$ defines the partition associated with $\equiv_d$ and consider $\mathcal P_{d+1}$. By definition, two vertices $q_1, q_2 \in Q$ are in the same class of $\mathcal P_{d+1}$ if and only if they are in the same class of $\mathcal P_d$ (i.e.\ in the same class of $\equiv_d$) and for every equivalence class of $\equiv_d$, they have the same number of children in that equivalence class. This condition is clearly necessary for $q_1$ and $q_2$ to be equivalent with respect to $\equiv_{d+1}$. On the other hand, if the condition is satisfied, then we can
construct an isomorphism 
 \[\phi_{d+1}: T_{\widetilde{q}_1}^{(d+1)} \to  T_{\widetilde{q}_2}^{(d+1)}\]
as follows: Firstly, we map $\widetilde{q}_1$ to $\widetilde{q}_2$. Secondly, we can send every child $v_1$ of $\widetilde{q}_1$ to a child $v_2$ of $\widetilde{q}_2$ with $v_1 \equiv_d v_2$. The latter implies that we can extend this isomorphism from $T_{\widetilde{q}_1}^{(1)}$ to $T_{\widetilde{q}_1}^{(d+1)}$ by mapping $T_{v_1}^{(d)}$ isomorphically to $T_{v_2}^{(d)}$.
\end{proof}
This provides an efficient algorithm to compute the minimal covering quotient of a given finite connected graph $A$.
\begin{remark}
We have constructed the minimal covering quotient of a self-similar tree using a modification of Moore's algorithm. Let us briefly outline a different approach, which works by reduction to the classical Moore algorithm.

We claim that on any self-similar tree $T$ there exists a labelling (induced from some finite quotient $A$ of $T$) such that the corresponding minimal DFA (which can be computed by the classical Moore algorithm) is geometrically minimal. The undelying graph is the then desired minimal covering quotient.

To prove the claim it suffices (by Lemma \ref{lem:finite.levels.determine.full.subtree}) to construct a labelling which for every edge $e$ fully describes the geometry of the space of all paths of length $\leq (\vert Q \vert + 1)^2$ that start with $e$. We leave this to the reader.
\end{remark}

\subsection{Aut-rigid self-similar structures from Myhill-Nerode graphs}

Throughout this subsection, $T$ denotes a self-similar tree with Myhill-Nerode graph $\overline{A}$.
\begin{con}[Minimal rigid self-similar structures]\label{AutRigMN} Let $\pi:T \to \overline{A}$ be a (universal) covering morphism. By Construction \ref{example:pathlanguagetree} we then obtain a rigid self-similar structure $S_\pi$ on $T$ as follows: Two vertices $v, w$ of $T$ are of the same type if and only if $v \equiv w$, which means that there is some vertex $\overline{q}$ of $\overline{A}$ such that
$\pi(v) = \pi(w) =\overline{q}$. In this case there is a unique isomorphism $\phi_{vw}$ such that the diagram
\begin{equation}\label{phivw}\begin{xy}\xymatrix{
T_v \ar[rr]^{\phi_{vw}} \ar[d]_\pi&& T_w \ar[d]^\pi\\
\overline{A}_{\overline{q}}\ar[rr]^{\mathrm{Id}}&& \overline{A}_{\overline{q}}
}
\end{xy}
\end{equation}
commutes. The morphisms of the self-similar structure $S_\pi$ are then given by $S_\pi(v,w) = \{\phi_{vw}\}$.

Note that, by definition, we have $T_v \cong T_w$ if and only if $v \equiv w$. In other words, our choice of covering morphism $\pi$ amounts to choosing a \emph{canonical} isomorphism for every pair of isomorphic subtrees of $T$ such that these choices are compatible.
\end{con}
The self-similar structures $S_\pi$ arising from covering morphisms $\pi: T \to \overline{A}$ have the following special property, which makes them particularly adapted to the study of automorphisms of $T$.
\begin{definition} If $S$ is a self-similar structure on $T$ with type function $\texttt{type}$, then an automorphism $g$ of $T$ is called \emph{type-preserving} with respect to $S$ if $\texttt{type} \circ g = \texttt{type}$. We say that a self-similar structure is \emph{Aut-rigid} if every automorphism of $T$ is type-preserving with respect to $S$.
\end{definition}
\begin{proposition}\label{S0} For every covering morphism $\pi: T \to \overline{A}$ the associated rigid self-similar structure $S_\pi$ is Aut-rigid.
\end{proposition}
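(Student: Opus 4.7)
The plan is to reduce the statement to Proposition \ref{MNUniqueness}, which says any two covering morphisms $T \to \overline{A}$ agree on vertices. The type function of $S_\pi$ is essentially $\pi_0$ (two vertices have the same type iff they lie over the same vertex of $\overline{A}$), so ``type-preserving'' means exactly $\pi_0 \circ g_0 = \pi_0$ for every $g \in \Aut(T)$.

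First, I would observe that for any $g \in \Aut(T)$, the composition $\pi \circ g: T \to \overline{A}$ is again a covering morphism. Indeed, $g$ is an automorphism of rooted graphs (so $g_0(\epsilon_T) = \epsilon_T$), hence in particular a covering morphism $T \to T$, and the composition of covering morphisms is a covering morphism. Since $\pi$ is rooted, so is $\pi \circ g$.

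Next, I would apply Proposition \ref{MNUniqueness} to the two covering morphisms $\pi$ and $\pi \circ g$ from the connected graph $T$ to its minimal covering quotient $\overline{A}$. The proposition yields $(\pi \circ g)_0 = \pi_0$, i.e.\ $\pi_0(g(v)) = \pi_0(v)$ for every vertex $v$ of $T$. Since the type of $v$ under $S_\pi$ is $\pi_0(v)$, this says precisely that $g$ is type-preserving with respect to $S_\pi$. As $g \in \Aut(T)$ was arbitrary, $S_\pi$ is Aut-rigid.

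There is essentially no obstacle here; the substantive content was already packed into Proposition \ref{MNUniqueness} (which itself relied on Remark \ref{UP}.(iv) to produce an automorphism of $T$ comparing two covering morphisms, and on the fact that automorphisms of $T$ preserve geometric Nerode classes because $g$ restricts to an isomorphism $T_v \to T_{g(v)}$). The only thing to be careful about is verifying that $\pi \circ g$ is indeed a (rooted) covering morphism, which is immediate from the definitions.
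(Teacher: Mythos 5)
Your proof is correct and is essentially the same as the paper's, just repackaged: the paper argues directly that $g$ restricts to isomorphisms $T_v \to T_{g(v)}$ and invokes the identification of $\pi_0(v)$ with the geometric Nerode class to conclude $\pi_0(v)=\pi_0(g(v))$, whereas you observe that $\pi \circ g$ is another covering morphism $T \to \overline{A}$ and cite Proposition~\ref{MNUniqueness}. Since the proof of Proposition~\ref{MNUniqueness} is exactly that direct argument applied to the comparison automorphism, the two routes unwind to the same computation.
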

\begin{proof} Let $g \in \Aut(T)$ and let $v$ be a vertex of $T$. Then $g$ induces an isomorphism between $T_v$ and $T_{g(v)}$ and hence $\pi_0(v) = \pi_0(g(v))$. Since $\pi_0$ is precisely the type function of $S_\pi$ the proposition follows.
\end{proof}
\begin{corollary} Every self-similar tree admits a rigid and Aut-rigid self-similar structure.\qed
\end{corollary}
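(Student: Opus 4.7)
The plan is to simply assemble the pieces already in hand. Given a self-similar tree $T$, the equivalence (i)$\Leftrightarrow$(v) of Theorem \ref{BBMThm} provides a finite connected graph $A$ such that $T \cong T_A$. Applying Construction \ref{MinQuot} to $T$ (or equivalently Proposition \ref{MinCovQ1} to $A$) yields the minimal covering quotient $\overline{A}$ together with a universal covering morphism $\pi: T \to \overline{A}$.

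Next, I would feed $\pi$ into Construction \ref{AutRigMN} to produce the rigid self-similar structure $S_\pi$ on $T$: the type of a vertex $v$ is declared to be $\pi_0(v) \in \overline{Q}$, and for any two vertices $v,w$ of the same type, the unique morphism $\phi_{vw}: T_v \to T_w$ is the lift of $\mathrm{Id}_{\overline{A}_{\pi_0(v)}}$ guaranteed by Corollary \ref{LiftingAut}. Rigidity is built into the construction since $|S_\pi(v,w)| = 1$ whenever $v$ and $w$ have the same type.

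Finally, Aut-rigidity is exactly the content of Proposition \ref{S0}, which tells us that every $g \in \Aut(T)$ preserves the type function $\pi_0$ (because $g$ restricts to an isomorphism $T_v \to T_{g(v)}$, forcing $v \equiv g(v)$ in the geometric Nerode sense, hence $\pi_0(v) = \pi_0(g(v))$). Combining these two observations proves that $S_\pi$ is both rigid and Aut-rigid. There is essentially no obstacle here: the real work was already done in establishing the existence of the minimal covering quotient and in Proposition \ref{S0}, so the corollary is just a one-line assembly.
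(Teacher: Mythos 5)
Your proposal is correct and matches the paper's reasoning exactly: the corollary is stated with a terminal \texttt{\textbackslash qed} precisely because it follows immediately from Construction \ref{AutRigMN} (which supplies the rigid structure $S_\pi$) and Proposition \ref{S0} (which supplies Aut-rigidity). The assembly you describe is the intended one-line argument.
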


In general, rigid self-similar structures arising from covering morphisms may fail to be Aut-rigid:

\begin{example} 
Consider again the finite graph $A$ and its universal covering tree $T$ from Figure \ref{fig:example1}. The labels indicate a specific choice of covering morphism $\pi: T \to A$, which defines a rigid self-similar structure on $T$. We claim that this self-similar structure is not Aut-rigid.

Indeed, since all children of the root are of different types, every type-preserving automorphism of $T$ has to fix the first level. However, there are automorphisms of $T$ which permute this level arbitrarily, and hence not every automorphism is type-preserving.
\end{example}

\section{Automorphism groups of self-similar trees} \label{sec:automorphisms.of.selfsimilar.trees}

In this subsection we describe the automorphism group of an arbitrary self-similar tree $T$ in terms of permutation groups. The key idea is to equip $T$ with an Aut-rigid self-similar structure and thereby to reduce to the case of type-preserving automorphism. More precisely we will work in the following setting:
\begin{notation} \label{not:Setup.for.section.six} Let $T$ be a self-similar tree with vertex set $V(T)$ and edge set $E(T)$.
\begin{itemize}
\item We denote by $A = (Q, \cE, s,t, \epsilon)$ the minimal covering quotient of $T$. 
\item We fix once and for all a covering morphism $\pi: T \to A$ and denote by $S_\pi$ the associated Aut-rigid self-similar structure as in \ref{AutRigMN}. This determines for all 
 vertices $v,w$ of $T$ with $T_v \cong T_w$ a canonical isomorphism $\phi_{vw}: T_v \to T_w$ as in \eqref{phivw}.
 \item We pick an admissible labelling $\ell: A \to \Sigma$ which induces  a labelling $\ell_T: E(T) \to \Sigma$ via $\pi$; this allows us to identify $T$ with the path language tree $T_{\mathcal A}$ for the corresponding geometrically minimal automaton $\mathcal A$ as in Notation \ref{PLT}. We may thus assume that $V(T) = \mathcal L_{\mathcal A}$ and that edges are given as in \eqref{EdgesPLT}.
 \item Since every vertex of $T$ except for the root has a unique incoming edge, we can extend $\ell_T$ to non-root vertices and set
\[
\ell_T: V(T)\setminus\{\epsilon\} \to \Sigma, \quad \alpha_1 \cdots \alpha_n \mapsto \alpha_n.
\] 
\item Let $v \in V(T)$ and $q = \pi_0(v) \in Q$. Recall that we denote by $\mathcal C(q) = t(q^+)$ and $\mathcal C(v) =t(v^+)$ the respective children and by $\Sigma(q) = \ell(q^+)$ and $\Sigma(v) = \ell_T(v^+)$ the respective labels of the outgoing edges.
\end{itemize}
\end{notation}

\subsection{Admissible portraits} We have seen in Remark \ref{PortAut} that every automorphism $g \in \Aut(T)$ is uniquely determined by its portrait $\sigma(g)$. Under our standing assumption that the labelling on $T$ is induded by a \emph{geometrically minimal} DFA, we can characterize the resulting portraits in purely local terms. For this we recall from the introduction that for a state $q\in Q$ the group of \emph{admissible permutations} of $\Sigma(q)$ is defined as 
\[
\Sym(q) := \prod_{q' \in \mathcal C(q)} \mathrm{Sym}(\Sigma(q,q')) < \mathrm{Sym}(\Sigma(q)).
\]
Here, $\Sigma(q) = \bigsqcup_{q' \in \mathcal C(q)} \Sigma(q,q')$, where $\Sigma(q,q') := \ell(q^+ \cap (q')^-)$. We say that a portrait $\sigma = (\pre{v}\sigma)_{v \in V(T)}$ is \emph{admissible} if 
$\pre{v}\sigma$ is an admissible permutation of $\Sigma(\pi_0(v))$ for every $v\in V(T)$. In this case we refer to $\pre{v}\sigma$ as the \emph{local permutation} of $\sigma$ at $v$.

\begin{theorem}\label{thm:main.result.on.local.permutations} If a portrait $\sigma$ defines an automorphism of $T$, then it is admissible.
\end{theorem}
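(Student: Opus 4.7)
The plan is to deduce admissibility of the portrait of an automorphism from Aut-rigidity of the self-similar structure $S_\pi$ associated with the covering $\pi: T \to A$. Since $A$ is geometrically minimal, Proposition \ref{S0} ensures that every $g \in \Aut(T)$ satisfies $\pi_0 \circ g = \pi_0$, and essentially everything will follow by unwinding definitions.

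First I would fix $g \in \Aut(T)$ with portrait $\sigma = \sigma(g)$, a vertex $v \in V(T)$, and set $q := \pi_0(v)$. By Aut-rigidity (Proposition \ref{S0}) we have $\pi_0(g(v)) = q$ as well, so in particular $\Sigma(g(v)) = \Sigma(\pi_0(g(v))) = \Sigma(q) = \Sigma(v)$. Now given any $\alpha \in \Sigma(v)$, the word $g(v)\,\pre{v}\sigma(\alpha) = g(v\alpha)$ lies in $\mathcal L = V(T)$, so the letter $\pre{v}\sigma(\alpha)$ is the label of an outgoing edge at $g(v)$, i.e.\ $\pre{v}\sigma(\alpha) \in \Sigma(g(v)) = \Sigma(q)$. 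Hence $\pre{v}\sigma$ restricts to an injection $\Sigma(q) \to \Sigma(q)$ of a finite set to itself, and is therefore a bijection, i.e.\ an element of $\Sym(\Sigma(q))$.

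The remaining step is to promote this to membership in the subgroup $\Sym(q) = \prod_{q' \in \mathcal{C}(q)} \Sym(\Sigma(q,q'))$, which amounts to showing that $\pre{v}\sigma$ preserves the partition $\Sigma(q) = \bigsqcup_{q' \in \mathcal{C}(q)} \Sigma(q,q')$. Fix $q' \in \mathcal{C}(q)$ and $\alpha \in \Sigma(q,q')$; by definition this means $\pi_0(v\alpha) = q'$. Write $\beta := \pre{v}\sigma(\alpha)$, so that $g(v\alpha) = g(v)\beta$. Applying Proposition \ref{S0} once to the vertex $v\alpha$ gives $\pi_0(g(v\alpha)) = \pi_0(v\alpha) = q'$, while reading off the state reached from $g(v)$ along $\beta$ and applying Aut-rigidity at $v$ gives $\pi_0(g(v)\beta) = \delta(\pi_0(g(v)), \beta) = \delta(q, \beta)$. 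Equating these yields $\delta(q,\beta) = q'$, i.e.\ $\beta \in \Sigma(q,q')$, which is exactly the desired conclusion.

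Putting the two steps together, $\pre{v}\sigma \in \Sym(q) = \Sym(\pi_0(v))$ for every $v \in V(T)$, so $\sigma$ is admissible. The only nontrivial input was the Aut-rigidity of $S_\pi$ from Proposition \ref{S0}; without geometric minimality of $A$ (which is what makes $S_\pi$ Aut-rigid) there would be no reason for $\pre{v}\sigma$ to respect the target decomposition of the outgoing edges, and this is the one place where the geometric minimality hypothesis is genuinely used.
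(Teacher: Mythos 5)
Your proof is correct and is essentially the argument the paper uses: it first invokes Proposition \ref{S0} (Aut-rigidity of $S_\pi$, i.e.\ $\pi_0 \circ g = \pi_0$) to conclude $\pre{v}\sigma \in \Sym(\Sigma(q))$, and then applies Aut-rigidity again at the child $v\alpha$ to show $\delta(q,\pre{v}\sigma(\alpha)) = \delta(q,\alpha)$, which is precisely the paper's Admissibility Lemma. The only cosmetic difference is that the paper frames the first step via local actions $\pre{v}g \in \Aut(T_{\mathcal A[q]})$, but the substance is identical.
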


We will see in Theorem \ref{PortraitBijection} below that the converse is also true, i.e.\ automorphisms of $T$ correspond precisely to admissible portraits.

The remainder of this subsection is devoted to the proof of Theorem \ref{thm:main.result.on.local.permutations}. Our first goal is to describe isomorphisms between rooted subtrees of $T$ in terms of our fixed minimal automaton. Note that every rooted isomorphism $\phi$ of trees is uniquely determined by its vertex part $\phi_0$, and to simplify notation we will often just write $\phi$ instead of $\phi_0$.
\begin{remark}[Identifying subtrees with sublanguages]\label{IsoTrees}
Let $v \in V(T)$ and $q := \pi_0(v) \in Q$. The vertices of $T_v$ are words of the form $vv' \in  \mathcal L_{\mathcal A}$ with $v' \in \Sigma^*$. Since the automaton $\mathcal A$ is in the state $q$ after reading $v$, the condition $vv' \in \mathcal L_{\mathcal A}$ means precisely that $v' \in \mathcal L_{\mathcal A[q]}$. We thus obtain a bijection
\begin{equation}
\iota_v:  \mathcal L_{\mathcal A[q]} \to V(T_v), \quad v' \mapsto vv'.
\end{equation}
Identifying $\mathcal{L}_{\mathcal{A}[q]}$ with the set of vertices of its path language tree $T_{\mathcal{A}[q]}$ (cf.\,Notation \ref{PLT}), we observe that $\iota_v$ is in fact the vertex-part of an isomorphism $\iota_v : T_{\mathcal{A}[q]} \rightarrow T_v$ of rooted trees.
\end{remark}
In the sequel, given two graphs $A$ and $B$ we denote by $\mathrm{Iso}(A,B)$ the set of isomorphisms from $A$ to $B$.

\begin{proposition}[Local isomorphisms] \label{prop:Isomorphisms.of.subtrees.are.automorphisms.of.sublanguage}
    Let $v, w \in V(T)$ and assume that $\mathrm{Iso}(T_v, T_w) \neq \emptyset$. Then the following hold:
    \begin{enumerate}[(i)]
    \item $v \equiv w$, hence $\pi_0(v) = \pi_0(w) = q$ for some $q \in Q$.
    \item There is a bijection
    \[
    \mathrm{Ad}_{v,w} : \mathrm{Iso}(T_v, T_w)  \to \Aut(T_{\mathcal A}[q]), \quad  \varphi \mapsto \iota_w^{-1} \circ \varphi \circ \iota_v.
    \]
    which maps $\varphi_{vw}$ to the identity.
    \item If $\phi \in  \mathrm{Iso}(T_v, T_w)$, then $\pre{v}{\varphi} := \mathrm{Ad}_{v,w}(\varphi)$ is the unique element of $\Aut(T_{\mathcal{A}[q]})$ such that $\varphi(vv') = w\pre{v}{\varphi}(v')$.
    \end{enumerate}
\end{proposition}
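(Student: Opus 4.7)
The plan is to unwind the definitions; the proposition is really a formal consequence of the setup, with the only non-trivial verification being the identification of $\varphi_{vw}$ with concatenation.

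For part (i), the hypothesis $\mathrm{Iso}(T_v, T_w) \neq \emptyset$ says $T_v \cong T_w$, which is by definition the geometric Nerode relation $v \equiv w$. Since $A$ is taken to be the minimal covering quotient of $T$, Proposition \ref{MNUniqueness} (or equivalently the discussion that $\overline{\pi}_0(v) = [v]$ preceding it) shows that any covering $\pi:T \to A$ factors on vertices through $\equiv$, so $\pi_0(v) = \pi_0(w) =: q$.

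For part (ii), the maps $\iota_v:T_{\mathcal{A}[q]} \to T_v$ and $\iota_w:T_{\mathcal{A}[q]} \to T_w$ are isomorphisms of rooted trees by Remark \ref{IsoTrees}, so conjugation $\varphi \mapsto \iota_w^{-1}\circ \varphi \circ \iota_v$ is a bijection from $\mathrm{Iso}(T_v,T_w)$ to $\mathrm{Iso}(T_{\mathcal{A}[q]}, T_{\mathcal{A}[q]}) = \Aut(T_{\mathcal{A}[q]})$, with obvious inverse $\psi \mapsto \iota_w \circ \psi \circ \iota_v^{-1}$. To show $\mathrm{Ad}_{v,w}(\varphi_{vw}) = \mathrm{Id}$, the key point is to identify $\varphi_{vw}$ with the concatenation map $vv' \mapsto wv'$. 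By the defining diagram \eqref{phivw}, $\varphi_{vw}$ is the unique rooted tree isomorphism $T_v \to T_w$ with $\pi \circ \varphi_{vw} = \pi$. Under the identification $V(T) = \mathcal{L}_{\mathcal A}$, the projection $\pi_0$ sends a word to the state reached from $\epsilon$ after reading it; since the automaton is in state $q$ after reading either $v$ or $w$, one has $\pi_0(vv') = \pi_0(wv')$ for all $v' \in \mathcal L_{\mathcal A[q]}$. Hence concatenation is a rooted isomorphism $T_v \to T_w$ commuting with $\pi$, and by the uniqueness clause of Corollary \ref{LiftingAut} it must agree with $\varphi_{vw}$. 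Consequently $\iota_w^{-1}\circ \varphi_{vw} \circ \iota_v$ sends $v' \mapsto vv' \mapsto wv' \mapsto v'$, i.e.\ it is the identity on $T_{\mathcal A[q]}$.

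For part (iii), set $\pre{v}\varphi := \mathrm{Ad}_{v,w}(\varphi)$. By definition $\pre{v}\varphi(v') = \iota_w^{-1}(\varphi(\iota_v(v'))) = \iota_w^{-1}(\varphi(vv'))$; applying $\iota_w$ gives the stated formula $\varphi(vv') = w\,\pre{v}\varphi(v')$. Uniqueness of $\pre{v}\varphi$ is immediate because $\iota_w$ (i.e.\ left-concatenation with $w$) is injective, so the equation determines the value of $\pre{v}\varphi$ on every $v' \in \mathcal L_{\mathcal A[q]}$.

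The only step that requires genuine attention is the identification of $\varphi_{vw}$ with concatenation; every other part is a bookkeeping check. This identification is the conceptual reason admissible portraits with respect to the minimal automaton $\mathcal A$ will be purely local data, so it is worth stating it as a lemma or emphasizing it in the write-up before deriving Theorem \ref{thm:main.result.on.local.permutations}.
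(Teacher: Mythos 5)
Your proof is correct and follows essentially the same route as the paper's: part (i) by the definition of $\equiv$ and minimality of $A$, part (ii) by observing that conjugation by isomorphisms is a bijection and that $\varphi_{vw}$ coincides with the concatenation map $vv' \mapsto wv'$ (which the paper also establishes by appealing to the uniqueness of lifts, using that $\pi$ is label-preserving so the vertex check suffices), and part (iii) by unwinding $\mathrm{Ad}_{v,w}$. The only stylistic difference is that you invoke Corollary~\ref{LiftingAut} directly to identify $\varphi_{vw}$ with concatenation, whereas the paper argues via unique path lifting; both hinge on the same observation that the pulled-back labelling forces the lift of the identity to be concatenation.
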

\begin{proof} (i) is immediate from the definition of $\equiv$.

\item (ii) Since $\iota_v$ and $\iota_w$ are isomorphisms, so is $\iota_w^{-1} \circ \varphi \circ \iota_v$ for every $\phi \in \mathrm{Iso}(T_v, T_w)$. This shows that $\mathrm{Ad}_{v,w}$ is well-defined, and by definition it is bijective with inverse $\psi \mapsto \iota_w \circ \psi \circ \iota_v^{-1}$.

To show the statement on $\varphi_{vw}$, recall that $\varphi_{vw} : T_v \rightarrow T_w$ was defined to be the unique lift of the identity on $A_q$. Specifically, if $\gamma$ is a path in $A_q$ that starts at $q$, it has a unique lift $\gamma_v$ in $T_v$ that starts at $v$ and a unique lift $\gamma_w$ in $T_w$ that starts at $w$. The map $\varphi_{vw}$ is defined by sending the endpoint of $\gamma_v$ to $\gamma_w$. (In fact, $\varphi_{vw}$ sends the entire path $\gamma_v$ isomorphically to $\gamma_w$.) Because we are working with a labelled graph $(A, \ell)$ and have lifted this labelling to the universal covering tree, we can use the labeling to describe the map $\varphi_{vw}$ and obtain
\[ \varphi_{vw} : T_v \rightarrow T_w, \quad vu \mapsto wu. \]
Therefore,
\[ (\varphi_{vw})_0 = \iota_w \circ \iota_v^{-1} \]
and we compute
\[ \mathrm{Ad}_{v,w}((\varphi_{vw})_0) = \iota_w^{-1} \circ \iota_w \circ \iota_v^{-1} \iota_v = \Id. \]

\item (iii) The formula follows immediately from
\[
  \varphi(vv') =  \varphi \circ \iota_v \circ \iota_v^{-1}(vv') = \iota_w \circ \iota_w^{-1} \circ \varphi \circ \iota_v( v' ) =  w (\iota_w^{-1} \circ \varphi \circ \iota_v)( v' ),
\]
and it clearly determines $\pre{v}{\varphi}$.
\end{proof}
\begin{example}[Restrictions of global automorphisms] Let $g \in \Aut(T)$ be a global automorphism and $v \in V(T)$ with $\pi_0(v) = q$. Then $g$ restricts to an isomorphism $T_v \rightarrow T_{g.v}$ to which Proposition \ref{prop:Isomorphisms.of.subtrees.are.automorphisms.of.sublanguage} can be applied. In particular,
\[
\pre{v}{g} := \mathrm{Ad}_{v, g.v}(g|_{T_v}) \in \Aut(T_{\mathcal A[q]})
\]
is the unique automorphism of $\Aut(T_{\mathcal A[q]})$ such that
    \begin{equation}\label{gvw}
        g(vw) = g(v)\pre{v}{g}(w) \text{ for all }w \in \mathcal L_{\mathcal A[q]}.
    \end{equation}
\end{example}
\begin{definition} If $g \in \Aut(T)$ and $v \in V(T)$, then $\pre{v}{g} := \mathrm{Ad}_{v, g.v}(g|_{T_v})$ is called the \emph{local action} of $g$ at $v$.
\end{definition}
Comparing Formulas \eqref{gvw} and \eqref{OriginalPortrait} we see that the portrait of $g$ is given by the restrictions of the local actions, i.e.\
\[
\pre{v}\sigma(g) = \pre{v}g|_{\Sigma(\pi_0(v))}.
\]
Since $g$ is type-preserving and bijective, we deduce from \eqref{OriginalPortrait} that $\pre{v}\sigma(g) \in \mathrm{Sym}(\Sigma(\pi_0(v)))$. Theorem \ref{thm:main.result.on.local.permutations} thus follows from the following lemma.

\begin{lemma}[Admissibility] Let $\delta$ be the transition function of the geometrically minimal DFA $\mathcal{A}$. If $v \in V(T)$ is a vertex of type $q$ and $\alpha \in \Sigma(q)$, then for every $g \in Aut(T)$, we have
        \begin{equation}\label{PreSigmas}
            \delta(q, \pre{v}\sigma(g)(\alpha)) =  \delta(q,\alpha).
        \end{equation}
In particular, $\pre{v}\sigma(g)$ is an admissible permutation.
\end{lemma}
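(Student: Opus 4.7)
The plan is to deduce the identity \eqref{PreSigmas} directly from two ingredients: the Aut-rigidity of the self-similar structure $S_\pi$ established in Proposition~\ref{S0}, and the fact that $\pi: T \to A$ is label-preserving (so that the type of any non-root vertex $u\beta$ can be read off from its label via $\pi_0(u\beta) = \delta(\pi_0(u), \beta)$). Once these two facts are combined, the claimed equality will drop out as a type computation carried out on both sides of $g(v\alpha) = g(v)\pre{v}\sigma(g)(\alpha)$.

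The key steps, in order, are as follows. First, I would apply Proposition~\ref{S0}: since $S_\pi$ is Aut-rigid and $\pi_0$ is its type function, $\pi_0 \circ g = \pi_0$ for every $g \in \Aut(T)$. In particular, $\pi_0(g(v)) = q$ and $\pi_0(g(v\alpha)) = \pi_0(v\alpha)$. Second, I would use the label-preserving property of $\pi$ to express both types in terms of $\delta$. On the one hand, since the edge $(v, \alpha, v\alpha)$ of $T$ covers the edge $(q, \alpha, \delta(q,\alpha))$ of $A$, we get $\pi_0(v\alpha) = \delta(q, \alpha)$. On the other hand, the defining property of the portrait gives $g(v\alpha) = g(v)\,\pre{v}\sigma(g)(\alpha)$, and since $\pi_0(g(v)) = q$ with $\pre{v}\sigma(g)(\alpha) \in \Sigma(q)$, the same argument yields $\pi_0(g(v\alpha)) = \delta(q, \pre{v}\sigma(g)(\alpha))$. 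Equating the two expressions for $\pi_0(g(v\alpha)) = \pi_0(v\alpha)$ produces exactly \eqref{PreSigmas}.

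For the ``in particular'' clause, I would note that admissibility is a purely formal consequence of \eqref{PreSigmas}. By definition $\Sym(q) = \prod_{q' \in \mathcal{C}(q)} \Sym(\Sigma(q,q'))$ is the subgroup of $\Sym(\Sigma(q))$ consisting of those permutations that preserve the partition $\Sigma(q) = \bigsqcup_{q'} \Sigma(q,q')$, where $\Sigma(q,q') = \{\alpha \in \Sigma(q) \mid \delta(q,\alpha) = q'\}$. Equation \eqref{PreSigmas} is precisely the statement that $\pre{v}\sigma(g)$ sends each block $\Sigma(q,q')$ into itself, so $\pre{v}\sigma(g) \in \Sym(q)$.

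I do not anticipate a real obstacle here: all the conceptual content has been absorbed into Proposition~\ref{S0}, whose proof relied on Proposition~\ref{MNUniqueness} and hence on the geometric minimality of $\mathcal{A}$. The lemma is essentially a bookkeeping step translating Aut-rigidity into the language of portraits; the only care needed is to keep track of which alphabet $\Sigma(\cdot)$ the various labels live in, and to use that $\pi$ preserves labels so that transition functions of $\mathcal{A}$ compute types of children in $T$.
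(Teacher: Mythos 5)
Your proof is correct and takes essentially the same route as the paper: both compute the type $\pi_0(v\alpha)=\delta(q,\alpha)$ via the label-preserving property of $\pi$, apply Aut-rigidity (Proposition~\ref{S0}) to deduce $\pi_0(g(v\alpha))=\pi_0(v\alpha)$, and then evaluate $\pi_0(g(v\alpha))$ a second time as $\delta(q,\pre{v}\sigma(g)(\alpha))$ using \eqref{OriginalPortrait} and $\pi_0(g(v))=q$, equating the two. You are slightly more explicit than the paper in flagging $\pi_0(g(v))=q$ and $\pre{v}\sigma(g)(\alpha)\in\Sigma(q)$ as the ingredients needed for that second evaluation, which is a small improvement in readability rather than a different argument.
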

\begin{proof} We observe that $v\alpha$ is the unique child of $v$ in $T$ with $\ell_T(v\alpha) = \alpha$. Since $\pi$ is label-preserving this means that
    \[ q' := \pi_0(v\alpha) = \delta(q, \alpha).\]
Since $g$ is type preserving (i.e.\ $\pi_0 \circ g = \pi_0$) we have $\pi_0( g(v\alpha) ) = q'$ and Equation \eqref{OriginalPortrait} implies that
\[
q' = \pi_0(g(v\alpha)) = \pi_0( g(v) \pre{v}\sigma(g)(\alpha)) = \delta( q, \pre{v}{\sigma(g)}(\alpha) ).
\]
This shows that
\[ \delta( q, \pre{v}{\sigma(g)}(\alpha) ) = q' = \delta(q, \alpha),\]
which implies that $\pre{v}{\sigma(g)}(\alpha)$ preserves $\Sigma(q, q')$ for every $q' \in \mathcal{C}(q)$ and thereby shows that $\pre{v}{\sigma(g)}$ is admissible.
\end{proof}
This finishes the proof of Theorem \ref{thm:main.result.on.local.permutations}.

\subsection{Rigid stabilizers} Let $v \in V(T)$; we denote by $|v|_\Sigma$ the word length of $v$ and by $\Aut(T)_v$ the stabilizer of $v$ under $V(T)$.
\begin{definition} The \emph{rigid stabilizer} $\Rist(v) < \Aut(T)_v$ is the subgroup of all automorphisms which restrict to the identity on $T \setminus T_v$.
\end{definition}
Given $n \in \mathbb N_0$ we also denote by $V(T)_n \subset \mathcal L_{\mathcal A}$ the set of words of length $n$ in $\mathcal L_{\mathcal A}$ and define the \emph{$n$th rigid level stabilizer} as
\[\Rist(n) := \langle \Rist(v) \mid v \in V(T)_n\rangle.\]
Furthermore, we define
\[ \Sym(n) := \prod_{ v \in V(T)_n } \Sym(\pi(v)), \]
where we recall that $\pi : T \rightarrow A$ is the projection to the Myhill-Nerode graph of $T$ and $\Sym(q)$ was defined in Definition \ref{def:admissible.permutation}. We call elements of $\Sym(n)$ {\it admissible permutations at level $n$}.

\begin{proposition}\label{RigidStab} The chain $\Aut(T) = \Rist(0) > \Rist(1)> \Rist(2) > \dots$ has the following properties:\
\begin{enumerate}[(i)]
\item $\Rist(n) \lhd \Aut(T)$ is normal for every $n \in \bN_0$.
\item For every $n \in \bN_0$ we have an isomorphism \[i_n: \prod_{v \in V(T)_n} \Rist(v) \to \Rist(n), \quad (g_{v})_{v \in V(T)_n} \to \prod_{v \in V(T)_n} g_v.\]
\item $\bigcap_{n=1}^\infty \Rist(n) = \{e\}$.
\item If $(g_n)_{n \in \bN_0} \in \prod_{n=1}^\infty \Rist(n)$, then $\prod_{n=0}^\infty g_n$ converges in $\Aut(T)$.
\end{enumerate}
\end{proposition}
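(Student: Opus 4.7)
The unifying observation is the following: every $h \in \Rist(n)$ fixes every vertex of level $\leq n$. Indeed, $\Rist(n)$ is generated by the $\Rist(w)$ for $w \in V(T)_n$, and each element of $\Rist(w)$ fixes $T \setminus T_w$, in particular all vertices of level $< n$; moreover $\Rist(w)$ fixes $w$ itself (since $\Rist(w) \subset \Aut(T)_w$), while for $w' \in V(T)_n \setminus \{w\}$ we have $w' \notin T_w$, so $w'$ is also fixed. Consequently any $g \in \Rist(n)$ preserves each subtree $T_v$ for $v \in V(T)_n$. I will refer to this as the key observation.

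\textbf{Part (i).} Every $g \in \Aut(T)$ preserves rooted distances from the root, so $g$ permutes $V(T)_n$. A direct check shows $g\,\Rist(v)\,g^{-1} = \Rist(g(v))$: conjugation intertwines an automorphism acting trivially on $T \setminus T_v$ with one acting trivially on $g(T \setminus T_v) = T \setminus T_{g(v)}$. Since $g(v) \in V(T)_n$, conjugation permutes the generators of $\Rist(n)$, and $\Rist(n) \lhd \Aut(T)$ follows.

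\textbf{Part (ii).} For distinct $v, w \in V(T)_n$ the subtrees $T_v$ and $T_w$ are disjoint, so any element of $\Rist(v)$ acts as the identity on $T_w$ and conversely; hence $\Rist(v)$ and $\Rist(w)$ commute. Since $V(T)_n$ is finite, $i_n$ is a well-defined homomorphism. For injectivity, if $\prod_v g_v = e$, then restricting to any fixed $T_w$ gives $g_w|_{T_w} = \id$, so $g_w = e$ for all $w$. For surjectivity, given $g \in \Rist(n)$, the key observation lets us define $g_v$ to agree with $g$ on $T_v$ and with the identity elsewhere; each $g_v$ is in $\Rist(v)$, and $g = \prod_v g_v$ holds vertex-by-vertex since the $T_v$ partition the vertices of level $\geq n$ while $g$ fixes all vertices of level $< n$.

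\textbf{Part (iii).} If $g \in \bigcap_n \Rist(n)$, then by the key observation $g$ fixes every vertex of every level, so $g = e$.

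\textbf{Part (iv).} Let $h_N := g_0 g_1 \cdots g_N$. For a vertex $v$ of level $m$, the key observation gives $g_k(v) = v$ for every $k \geq m$, so
\[
h_N(v) = g_0 \cdots g_{m-1}\bigl(g_m \cdots g_N(v)\bigr) = g_0 \cdots g_{m-1}(v) = h_{m-1}(v) \qquad \text{for all } N \geq m.
\]
Thus $h_N$ stabilizes pointwise, defining a map $g : V(T) \to V(T)$ by $g(v) := h_m(v)$ for $v$ of level $m$. This $g$ is level-preserving; it is a bijection on each level because $h_m$ is. It preserves edges: if $v$ is a child of $u$ with $u$ of level $m$, then $h_{m+1}(v) = h_m(g_{m+1}(v)) = h_m(v)$ (as $g_{m+1}$ fixes $v$), so $g(u) = h_m(u)$ and $g(v) = h_m(v)$ are joined by an edge since $h_m \in \Aut(T)$. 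Hence $g \in \Aut(T)$, and $h_N \to g$ in the pointwise (permutation) topology on $\Aut(T)$, which is exactly the meaning of convergence of $\prod_{n=0}^\infty g_n$.

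The only mildly delicate step is part (iv), where one must verify that the pointwise limit is genuinely an automorphism rather than just a set-theoretic map; but this is immediate once one notes that on any finite initial ball of radius $m$ the partial product $h_N$ equals the automorphism $h_m$ for all sufficiently large $N$. The rest of the proposition reduces cleanly to the key observation.
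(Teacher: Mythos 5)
Your proof is correct and takes essentially the same approach as the paper's (very terse) argument: normality from level-preservation, the product decomposition from disjointness of the subtrees $T_v$, triviality of the intersection because elements of $\Rist(n)$ fix all vertices of level $\leq n$, and convergence because partial products stabilize levelwise. You simply spell out the "key observation" and the pointwise-limit argument in (iv) that the paper leaves implicit.
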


\begin{proof} (i) is immediate from the fact that the $\Aut(T)$-action preserves levels, and (ii) follows from the fact that the trees $T_v$ with $v \in V(T)_n$ are pairwise disjoint, hence their rigid stabilizers commute. (iii) holds by definition, and (iv) follows from the fact that the sequence of partial products stabilizes on each level.
\end{proof}
\begin{lemma}\label{RistPortrait} Let $g \in \Aut(T)$ and $h \in \mathrm{Rist}(n)$. Then 
\[
\pre{v}\sigma(gh)= \left\{\begin{array}{rl}\pre{v}\sigma(g), &\text{if } |v|_\Sigma < n\\
\pre{v}\sigma(g) \circ \pre{v} \sigma(h), &\text{if } |v|_\Sigma = n \end{array} \right.
\]
\end{lemma}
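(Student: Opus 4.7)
The plan is to unwind both formulas directly from the defining equation of the portrait, namely $f(v\alpha) = f(v) \pre{v}\sigma(f)(\alpha)$ for every $f \in \Aut(T)$ and every $\alpha \in \Sigma(v)$, exploiting the fact that elements of $\Rist(n)$ act as the identity on the first $n$ levels. There is no substantial obstacle here; the only step that requires a moment's thought is the first one, which I state as a standalone observation.

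The key observation is that every $h \in \Rist(n)$ fixes every vertex $u \in V(T)$ with $|u|_\Sigma \leq n$ pointwise. Indeed, by Proposition \ref{RigidStab}(ii) we may write $h = \prod_{w \in V(T)_n} h_w$ with $h_w \in \Rist(w)$, and each $h_w$ fixes $T \setminus T_w$ pointwise, so in particular fixes every vertex at level strictly less than $n$; it also fixes the root $w$ of $T_w$. Hence any $u$ with $|u|_\Sigma \leq n$ is either outside every such $T_w$ (if $|u|_\Sigma < n$) or is equal to the root of exactly one $T_w$ (if $|u|_\Sigma = n$), and in both cases is fixed by each factor $h_w$.

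With this in hand I would proceed by cases. If $|v|_\Sigma < n$, then both $v$ and every child $v\alpha$ have level $\leq n$, so $h(v) = v$ and $h(v\alpha) = v\alpha$. Therefore
\[
(gh)(v\alpha) = g(v\alpha) = g(v)\,\pre{v}\sigma(g)(\alpha) = (gh)(v)\,\pre{v}\sigma(g)(\alpha),
\]
and the uniqueness in the definition of the portrait forces $\pre{v}\sigma(gh) = \pre{v}\sigma(g)$. If $|v|_\Sigma = n$, then by the observation $h$ fixes $v$ but not necessarily its children, and applying the portrait formula for $h$ at $v$ gives $h(v\alpha) = v\,\pre{v}\sigma(h)(\alpha)$. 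Consequently
\[
(gh)(v\alpha) = g\bigl(v\,\pre{v}\sigma(h)(\alpha)\bigr) = g(v)\,\pre{v}\sigma(g)\bigl(\pre{v}\sigma(h)(\alpha)\bigr) = (gh)(v)\,\bigl(\pre{v}\sigma(g)\circ\pre{v}\sigma(h)\bigr)(\alpha),
\]
and again uniqueness of the portrait gives $\pre{v}\sigma(gh) = \pre{v}\sigma(g)\circ\pre{v}\sigma(h)$.

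Two small sanity checks should be included: first, that $\pre{v}\sigma(h)(\alpha) \in \Sigma(v)$ so the expression $v\,\pre{v}\sigma(h)(\alpha)$ indeed lies in $\mathcal L_{\mathcal A}$ (this is immediate since $\pre{v}\sigma(h)$ is defined as a map $\Sigma(v) \to \Sigma$ taking values in $\Sigma(h(v)) = \Sigma(v)$); and second, that the two cases cover everything relevant, since the portrait is a family indexed by $v \in \mathcal L$ and the lemma makes no claim for $|v|_\Sigma > n$ (where in general no closed-form relation between $\pre{v}\sigma(gh)$, $\pre{v}\sigma(g)$, and $\pre{v}\sigma(h)$ is expected because $h$ may move $v$).
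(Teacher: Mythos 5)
Your proof is correct and takes essentially the same route as the paper's: both arguments unwind the defining relation $f(v\alpha) = f(v)\,\pre{v}\sigma(f)(\alpha)$ for $f = gh$, using the fact that $h$ fixes $v$ (and, in the first case, also $v\alpha$). The only difference is cosmetic: you justify the claim that $h$ fixes every vertex at level $\leq n$ explicitly via the decomposition of Proposition \ref{RigidStab}(ii), whereas the paper treats this as immediate from the definition of $\Rist(n)$; and you work directly with the portrait maps $\pre{v}\sigma(\cdot)$ while the paper passes through the local actions $\pre{v}(\cdot)$ of equation \eqref{gvw} before restricting.
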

\begin{proof} Let $w = v \beta \in \mathcal L_{\mathcal A}$ with final letter $\beta$. If $|v|_\Sigma < n$, then $gh(v) = g(v)$ and $gh(w)=g(w)$, and hence \eqref{gvw} yields
\[
\pre{v}(gh)(\beta) = \pre{v} g(\beta) \implies \pre{v}\sigma(gh)(\beta)= \pre{v}\sigma(g)(\beta).
\]
If $|v|_\Sigma = n$, then we still have $h(v) = v$ and thus \eqref{gvw} yields
\begin{align*}
g(v) \pre{v}(gh)(\beta) &= (gh)(v) \pre{v}(gh)(\beta) = gh(w)= g(h(w)) \\ &= g(h(v)\pre{v}h(\beta)) = g(v \pre{v}h(\beta)) = g(v)\pre{v}g(\pre{v} h(\beta)),
\end{align*}
which yields $\pre{v}\sigma(gh)(\beta) = \pre{v}\sigma(g) \circ \pre{v} \sigma(h)(\beta).$
\end{proof}
This allows us to compute portraits of infinite products of elements from rigid stabilizers. For $N \in \mathbb N_0$ we introduce the notation
\[
\sigma_N: \Aut(T) \to \Sym(N), \quad g \mapsto (\pre{v}\sigma(g))_{v \in V(T)_N}.
\]
\begin{corollary}\label{PortraitInfProd} If $(g_n)_{n \in \bN_0} \in \prod_{n=1}^\infty \Rist(n)$, then
\[
\pushQED{\qed}\sigma_N\left(\prod_{n=1}^\infty g_n\right) =  \sigma_N\left(\prod_{n=1}^N g_n \right) =  \sigma_N\left(\prod_{n=1}^{N-1} g_n \right)\sigma_N(g_N).\qedhere\popQED
\]
\end{corollary}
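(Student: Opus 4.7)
The plan is to reduce both equalities to repeated applications of Lemma \ref{RistPortrait}, which already computes portraits of products of the form $gh$ with $h$ in a rigid level stabilizer. No new ideas beyond this lemma and the convergence statement Proposition \ref{RigidStab}(iv) should be needed; the corollary is essentially a bookkeeping exercise.

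For the first equality, I would fix $N \in \bN_0$ and prove by induction on $k \geq N$ that
\[
\sigma_N\Bigl( \prod_{n=1}^{k} g_n \Bigr) = \sigma_N\Bigl(\prod_{n=1}^{N} g_n\Bigr).
\]
The base case $k = N$ is trivial. For the inductive step, write $\prod_{n=1}^{k+1} g_n = g \cdot g_{k+1}$ with $g := \prod_{n=1}^{k} g_n$ and $g_{k+1} \in \Rist(k+1)$. Since every $v \in V(T)_N$ satisfies $|v|_\Sigma = N < k+1$, the first case of Lemma \ref{RistPortrait} yields $\pre{v}\sigma(g \cdot g_{k+1}) = \pre{v}\sigma(g)$, so $\sigma_N$ is unchanged. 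Hence the sequence of partial portraits $\sigma_N(\prod_{n=1}^{k} g_n)$ is eventually constant, equal to $\sigma_N(\prod_{n=1}^N g_n)$. Since the infinite product $\prod_{n=1}^\infty g_n$ is by construction the limit of its partial products (Proposition \ref{RigidStab}(iv)) and since the value of $\sigma_N$ at a vertex $v$ of level $N$ depends only on the action of an automorphism on the children of $v$, which is already determined once the partial products stabilize on level $N+1$, the first equality follows.

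For the second equality, I would apply Lemma \ref{RistPortrait} once more, this time to $g := \prod_{n=1}^{N-1} g_n$ and $h := g_N \in \Rist(N)$. For every $v \in V(T)_N$ the second case of that lemma gives
\[
\pre{v}\sigma\Bigl(\prod_{n=1}^{N} g_n\Bigr) = \pre{v}\sigma(g) \circ \pre{v}\sigma(g_N),
\]
and collecting these equalities over all $v \in V(T)_N$ is exactly the definition of the product in $\Sym(N) = \prod_{v \in V(T)_N} \Sym(\pi(v))$. Combined with the first equality this gives the full chain.

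The only conceptual point that requires a moment's thought is the passage from partial to infinite products in the first equality; but this is immediate once one notes that each $g_n$ with $n > N$ lies in $\Rist(n) \subset \Rist(N+1)$ and hence fixes $V(T)_{N+1}$ pointwise, so the portrait at level $N$ is completely determined by the finite partial product $\prod_{n=1}^N g_n$. No obstacles beyond this routine verification are anticipated.
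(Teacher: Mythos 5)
Your proof is correct and uses exactly the two ingredients the paper intends (the corollary is stated with a \texttt{qed} marker and no explicit proof, signalling that it is meant to follow directly from Lemma \ref{RistPortrait} together with Proposition \ref{RigidStab}(iv)). Your handling of the two equalities via the two cases of Lemma \ref{RistPortrait} is exactly right, and your observation that the tail $\prod_{n>N} g_n$ lies in $\Rist(N+1)$ and hence has trivial portrait at level $N$ is the key point. One could streamline the passage to the infinite product by writing $\prod_{n=1}^\infty g_n = \bigl(\prod_{n=1}^N g_n\bigr) h$ with $h := \prod_{n>N} g_n \in \Rist(N+1)$ (using that $\Rist(N+1)$ is closed) and applying Lemma \ref{RistPortrait} once, rather than inducting over $k$ and then passing to a limit; but your version is equally valid and contains the same content.
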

Next we observe that rigid (level) stabilizers are retracts of the automorphism group:
\begin{con}[Retractions onto rigid stabilizers]\label{Defret} We have natural maps
\[
\iota_v: \Aut(T)_v \hookrightarrow \Aut(T),\; g \mapsto g \qand p_v: \Aut(T)_v \to \Aut(T_v), \; g \mapsto g|_{T_v}.
\]
The map $p_v$ restricts to an isomorphism $\Rist(v) \to \Aut(T_v)$ and hence there is a unique $s_v: \Aut(T_v) \to \Aut(T)_v$ such that
\[
p_v \circ s_v = \Id_{\Aut(T_v)} \qand s_v(\Aut(T_v)) = \Rist(v).
\]
We may use the isomorphisms $\varphi_{g(v)v} : T_{g(v)} \rightarrow T_v$ (see Notation \ref{not:Setup.for.section.six}) to define
\[ q_v : \Aut(T) \rightarrow \Aut(T)_v, \quad g \mapsto s_v( \varphi_{g(v)v} \circ g\vert_{T_v} ). \]
Note that $q_v \circ \iota_v = \Id_{\Aut(T)_v}$, the image of the map $\iota_v \circ q_v: \Aut(T) \to \Aut(T)$ is precisely $\Rist(v)$, and the map
\[ \mathrm{ret}_v := \iota_v \circ q_v : \Aut(T) \rightarrow \Rist(v) \]
is a retraction Given $n \in \mathbb N_0$ we also obtain a retraction
\[
\mathrm{ret}_n: \Aut(T) \to \Rist(n), \quad g \mapsto \prod_{v \in V(T)_n} \mathrm{ret}_v(g).
\]
\end{con}
\begin{lemma} Let $v \in T(V)$ be of type $q$ and let $w \in \mathcal L_{\mathcal A[q]}$. Then for all $g \in \Aut(T)$ we have
\begin{equation}\label{retpre}
\mathrm{ret}_v(g)(vw) = v\pre{v}g(w).
\end{equation}
\end{lemma}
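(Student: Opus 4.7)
My plan is to simply chase through the definitions.

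First I would unpack $\mathrm{ret}_v(g)$ using Construction \ref{Defret}. By construction, $\mathrm{ret}_v(g) = \iota_v(q_v(g))$, and $q_v(g) = s_v(\varphi_{g(v)v} \circ g|_{T_v})$. Since $s_v$ is the inverse of the restriction isomorphism $p_v: \Rist(v) \to \Aut(T_v)$, the element $\mathrm{ret}_v(g)$ lies in $\Rist(v)$ and its restriction to $T_v$ equals $\varphi_{g(v)v} \circ g|_{T_v}$. In particular, for any descendant $vw \in V(T_v)$ we have $\mathrm{ret}_v(g)(vw) = \varphi_{g(v)v}(g(vw))$.

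Second, I would apply Proposition \ref{prop:Isomorphisms.of.subtrees.are.automorphisms.of.sublanguage} (iii) to the isomorphism $g|_{T_v}: T_v \to T_{g(v)}$ (note that $\pi_0(v) = \pi_0(g(v)) = q$ because $g$ is type-preserving by Proposition \ref{S0}). This gives $g(vw) = g(v)\,\pre{v}g(w)$ for every $w \in \mathcal L_{\mathcal A[q]}$.

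Third, I would observe that in the labelled setting the canonical isomorphism $\varphi_{uu'}: T_u \to T_{u'}$ between cone subtrees over the same state $q$ simply swaps prefixes, i.e. $\varphi_{uu'}(uy) = u'y$ for $y \in \mathcal L_{\mathcal A[q]}$. This was established in the proof of Proposition \ref{prop:Isomorphisms.of.subtrees.are.automorphisms.of.sublanguage} (ii), where we saw $(\varphi_{uu'})_0 = \iota_{u'} \circ \iota_u^{-1}$. Applying this with $u = g(v)$, $u' = v$, and $y = \pre{v}g(w)$ yields
\[
\varphi_{g(v)v}\bigl(g(v)\,\pre{v}g(w)\bigr) = v\,\pre{v}g(w).
\]
Combining the three steps gives exactly $\mathrm{ret}_v(g)(vw) = v\,\pre{v}g(w)$, which is formula \eqref{retpre}.

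The argument is really just bookkeeping; the only mild subtlety is remembering that $\pre{v}g$ lives in $\Aut(T_{\mathcal A[q]})$ rather than $\Aut(T_v)$, so that $v\,\pre{v}g(w)$ is a concatenation of words in $\mathcal L_{\mathcal A}$, and that the simple ``prefix swap'' description of $\varphi_{uu'}$ is available precisely because we have fixed a label-preserving covering morphism $\pi$, which is what makes the three displayed equalities line up.
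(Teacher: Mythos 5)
Your proof is correct and follows essentially the same route as the paper: unpack $\mathrm{ret}_v(g)$ to see that it acts on $T_v$ as $\varphi_{g(v)v}\circ g|_{T_v}$, apply Proposition \ref{prop:Isomorphisms.of.subtrees.are.automorphisms.of.sublanguage} to write $g(vw)=g(v)\,\pre{v}{g}(w)$, and then use $(\varphi_{g(v)v})_0=\iota_v\circ\iota_{g(v)}^{-1}$ to swap the prefix. The only cosmetic difference is that the paper introduces the abbreviation $h:=\varphi_{g(v)v}\circ g$ and notes $h(v)=v$, but the computation is identical.
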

\begin{proof} Let us abbreviate $h := \phi_{g(v)v} \circ g$, then
\[
\mathrm{ret}_v(g)(vw) =  (s_v(\phi_{g(v) v} \circ g)|_{T_v}))(vw) = s_v(h)(vw) = h(vw)=  h(v) \pre{v}{h}(w).
\]
Now $h(v) = (\phi_{g(v) v} \circ g)(v) = v$ and $\varphi_{g(v)v} = \iota_v \circ \iota_{g(v)}^{-1}$. Thus
\[
h(vw) = \phi_{g(v)v}(g(vw)) = \phi_{g(v)v}(g(v)\pre{v}g(w)) = v \pre{v}g(w).\qedhere
\]
\end{proof}

\begin{corollary}
Let $g \in \Aut(V)$ and $h = \mathrm{ret}_v(g)$. Then for all $w \in V(T)$ we have
\begin{equation}\label{PortraitLocal}
\pre{w}\sigma(h) = \left\{\begin{array}{rl}
\pre{w}\sigma(g), & \text{if }w \in V(T_v),\\
\Id, & \text{else}.
 \end{array}\right.
\end{equation}
\end{corollary}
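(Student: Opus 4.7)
The plan is to verify the two cases of the piecewise formula directly from the defining identity $h(w\alpha) = h(w)\pre{w}\sigma(h)(\alpha)$, exploiting the fact that by construction $h = \mathrm{ret}_v(g) \in \Rist(v)$ acts as the identity on $T \setminus T_v$ and fixes $v$ itself. The key input will be the already established formula \eqref{retpre}, which describes $h$ on $T_v$ entirely in terms of the local action $\pre{v}g \in \Aut(T_{\mathcal{A}[q]})$.

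For the case $w \notin V(T_v)$, I would subdivide according to whether $w$ is a proper prefix (ancestor) of $v$ or is unrelated to $v$ (neither ancestor nor descendant). If $w$ is unrelated to $v$, then every child $w\alpha$ is also unrelated to $v$, so both $h(w) = w$ and $h(w\alpha) = w\alpha$, forcing $\pre{w}\sigma(h)(\alpha) = \alpha$. If $w$ is a proper prefix of $v$, write $v = w\beta v''$ for a unique $\beta \in \Sigma$ and $v'' \in \Sigma^*$; for $\alpha \neq \beta$ the child $w\alpha$ is again unrelated to $v$, so $h$ fixes it, while for $\alpha = \beta$ the child $w\beta$ is either a proper ancestor of $v$ (still outside $T_v$, hence fixed) or equals $v$ (fixed because $h$ stabilizes its root). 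In both subcases $\pre{w}\sigma(h) = \Id$.

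For the case $w \in V(T_v)$, I would write $w = vw'$ with $w' \in \mathcal L_{\mathcal{A}[q]}$, pick $\alpha \in \Sigma(w)$, and apply \eqref{retpre} twice to obtain $h(w) = v\, \pre{v}g(w')$ and $h(w\alpha) = v\, \pre{v}g(w'\alpha)$. Since $\pre{v}g$ is an automorphism of the path language tree $T_{\mathcal{A}[q]}$, its own portrait satisfies $\pre{v}g(w'\alpha) = \pre{v}g(w')\, \pre{w'}\sigma(\pre{v}g)(\alpha)$, and comparing with the defining identity for $\pre{w}\sigma(h)$ yields $\pre{w}\sigma(h)(\alpha) = \pre{w'}\sigma(\pre{v}g)(\alpha)$.

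It then remains to identify $\pre{w'}\sigma(\pre{v}g)$ with $\pre{w}\sigma(g)$. This is the only step that is not completely automatic, and I expect it to be the main (mild) obstacle. The idea is to expand $g(vw'\alpha)$ in two ways: once via the local action at $v$, giving $g(v)\pre{v}g(w')\pre{w'}\sigma(\pre{v}g)(\alpha)$, and once via the portrait of $g$ at the vertex $vw' = w$, giving $g(w)\pre{w}\sigma(g)(\alpha) = g(v)\pre{v}g(w')\pre{w}\sigma(g)(\alpha)$. Canceling the common prefix, which is legitimate since concatenation of words is cancellative, yields $\pre{w}\sigma(g) = \pre{w'}\sigma(\pre{v}g)$ and completes the proof.
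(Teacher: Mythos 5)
Your proof is correct and follows essentially the same route as the paper's: for $w\in V(T_v)$, reduce via \eqref{retpre} to the equality $\pre{v}h=\pre{v}g$ of local actions and transport along the identity $\pre{vw'}\sigma(\,\cdot\,)=\pre{w'}\sigma(\pre{v}(\,\cdot\,))$; for $w\notin V(T_v)$, use that $h\in\Rist(v)$ fixes $w$ and all its children. You spell out more explicitly than the paper two points that it treats as immediate --- the subdivision of the $w\notin V(T_v)$ case into ancestor vs.\ unrelated vertices (the paper simply observes that each child $w\alpha$ is either outside $T_v$ or equal to $v$, hence fixed), and the cancellation argument establishing $\pre{w}\sigma(g)=\pre{w'}\sigma(\pre{v}g)$, which the paper leaves implicit in the phrase ``and thus''.
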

\begin{proof} By \eqref{retpre} we have $\pre v h = \pre v g$ and thus $\pre{w}\sigma(h) = \pre{w}\sigma(h)$ for all  $w \in T_v$. If $w \in V(T) \setminus V(T_v)$ and $\alpha \in \Sigma(w)$, then either $w\alpha \not \in V(T_v)$ or $w\alpha = v$. Either way we have $h(w\alpha)=w\alpha$ (and $h(w) = w$) since $h \in \Rist(v)$. We thus deduce that
\[
w\alpha = h(w\alpha) = h(w)\pre{w}{\sigma(h)}(\alpha) = w  \pre{w}{\sigma(h)}(\alpha),
\]
and hence $\pre{w}{\sigma(h)} = \mathrm{Id}$.
\end{proof}
Consequently, if $g \in \Aut(V)$ and $h = \mathrm{ret}_n(g)$ for some $n \in \mathbb N_0$, then
\begin{equation}\label{PortraitRistn}
\pre{v}\sigma(h) = \left\{\begin{array}{rl}
\pre{v}\sigma(g), & \text{if }|v|_\Sigma \geq n,\\
\Id, & \text{if }|v|_\Sigma < n.
 \end{array}\right.
\end{equation}

\subsection{Basic automorphisms}
We can now prove the converse of Theorem~\ref{thm:main.result.on.local.permutations}:
\begin{theorem}\label{PortraitBijection} A portrait defines an automorphism if and only if it is admissible.
\end{theorem}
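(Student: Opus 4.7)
The forward direction is exactly Theorem \ref{thm:main.result.on.local.permutations}, so the plan is to establish the converse: every admissible portrait is the portrait of an automorphism. In view of Theorem \ref{AbstractPortraitsGood}, it suffices to verify condition (iii), namely that for an admissible portrait $\sigma = (\pre{v}\sigma)_{v \in \mathcal L}$ the associated map $\sigma : \mathcal L \to \Sigma^*$ defined by \eqref{eq:definition.of.automorphism.via.portrait} satisfies $\sigma(\mathcal L) \subset \mathcal L$ and $\pre{v}\sigma(\Sigma(v)) \subset \Sigma(\sigma(v))$ for every $v \in \mathcal L$.

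The key observation that makes this work is that the image $\sigma(v)$ will automatically have the \emph{same type} as $v$, i.e.\ $\pi_0(\sigma(v)) = \pi_0(v)$. This is the content of the admissibility condition \eqref{PreSigmas}: whenever $\pre{v}\sigma \in \Sym(\pi_0(v))$, the permuted letter $\pre{v}\sigma(\alpha)$ leads to the same state as $\alpha$. The plan is to prove by induction on $n = |v|_\Sigma$ that for every $v \in \mathcal L$ of length $n$:
\begin{enumerate}[(a)]
    \item $\sigma(v) \in \mathcal L$, and
    \item $\pi_0(\sigma(v)) = \pi_0(v)$.
\end{enumerate}
The base case $n = 0$ is trivial since $\sigma(\epsilon) = \epsilon$. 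For the inductive step, suppose (a) and (b) hold for all words of length $n$, and let $v \in \mathcal L$ of length $n$ with $q := \pi_0(v) = \pi_0(\sigma(v))$. For any $\alpha \in \Sigma(v) = \Sigma(q)$, admissibility gives $\pre{v}\sigma(\alpha) \in \Sigma(q) = \Sigma(\sigma(v))$, so Lemma \ref{lem:induction.over.portrait} yields $\sigma(v\alpha) = \sigma(v)\pre{v}\sigma(\alpha) \in \mathcal L$, establishing (a) at length $n+1$. Then (b) follows from the admissibility identity \eqref{PreSigmas} applied at $q$:
\[
\pi_0(\sigma(v\alpha)) = \delta(\pi_0(\sigma(v)), \pre{v}\sigma(\alpha)) = \delta(q, \pre{v}\sigma(\alpha)) = \delta(q,\alpha) = \pi_0(v\alpha).
\]

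Having (a) and (b) in hand, condition (iii) of Theorem \ref{AbstractPortraitsGood} is immediate: (a) gives $\sigma(\mathcal L) \subset \mathcal L$, and for every $v \in \mathcal L$ we have $\pre{v}\sigma(\Sigma(v)) \subset \Sigma(\pi_0(v)) = \Sigma(\pi_0(\sigma(v))) = \Sigma(\sigma(v))$, where the first inclusion uses that $\pre{v}\sigma \in \Sym(\pi_0(v))$ acts on $\Sigma(\pi_0(v))$ and the middle equality is (b). Thus Theorem \ref{AbstractPortraitsGood} produces an element $g \in \Aut(T)$ with $\sigma(g) = \sigma$, completing the proof.

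There is no significant technical obstacle here: the entire argument is a short induction whose engine is the admissibility identity \eqref{PreSigmas}. The only conceptual point worth emphasizing is that admissibility is precisely what is needed to make the induction close, since it is exactly the condition that type is preserved edge by edge; without geometric minimality of $\mathcal A$ this implication would fail, which is why the analogous statement in Remark \ref{rem:PortraitsGood} requires checking infinitely many non-local conditions.
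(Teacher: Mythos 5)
Your proof is correct, but it takes a genuinely different route from the paper's. The paper deduces Theorem~\ref{PortraitBijection} as a corollary of Theorem~\ref{PortraitBijection+}, which is constructive: it builds explicit \emph{basic automorphisms} $g_\sigma^w$ by lifting edge automorphisms of the minimal graph through the universal covering, shows via Lemma~\ref{gammaSection} that their products realize any admissible data on a fixed level, and then assembles an automorphism with any prescribed admissible portrait as a convergent infinite product over the chain of rigid level stabilizers (using Proposition~\ref{RigidStab}.(iv) and Corollary~\ref{PortraitInfProd}). You instead go directly back to the abstract criterion of Theorem~\ref{AbstractPortraitsGood}.(iii) and close a short induction on word length, with the key observation that admissibility forces $\pi_0(\sigma(v)) = \pi_0(v)$, so that the non-local condition $\pre{v}\sigma(\Sigma(v)) \subset \Sigma(\sigma(v))$ collapses to the local one $\pre{v}\sigma \in \Sym(\pi_0(v))$. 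Your argument is shorter and more elementary, and it isolates exactly why geometric minimality matters (type-preservation propagates edge by edge); what it does \emph{not} give you is the extra content of Theorem~\ref{PortraitBijection+}, namely that the automorphism lies in the closure of the group generated by the basic automorphisms, which the paper needs afterwards for Corollary~\ref{Generators} (topological generators) and for the semidirect-product decomposition in Proposition~\ref{Semidirect1}. One small presentational point: you cite \eqref{PreSigmas} as the ``admissibility identity,'' but that equation is the \emph{conclusion} of the Admissibility Lemma for portraits of actual automorphisms; the property you actually use is simply the definition of $\Sym(q) = \prod_{q'}\Sym(\Sigma(q,q'))$ preserving the fibers of $\delta(q,\cdot)$, which holds for any admissible portrait by definition, so the argument is sound — just phrase the citation as the definition of admissibility rather than as \eqref{PreSigmas}.
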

\begin{remark}
We have already characterized portraits which define an automorphism in Theorem \ref{AbstractPortraitsGood} and Remark \ref{rem:PortraitsGood}. Compared to these previous characterizations, the characterization from Theorem \ref{PortraitBijection} is local (i.e.\ the condition on $\pre{v}\sigma(g)$ can be checked without any reference to the other permutation $\pre{w}\sigma(g)$), and this will allow us to determine the structure of the automorphism group completely.
\end{remark}
More explicitly, Theorem \ref{PortraitBijection} say that the portrait map defines a bijection
\begin{equation}
\sigma: \Aut(T) \to \prod_{v \in V(T)} \Sym(\pi_0(v)), \quad g \mapsto \sigma(g).
\end{equation}
Note that $\sigma$ is well-defined by Theorem \ref{thm:main.result.on.local.permutations} and injective by Remark \ref{PortAut}. To prove surjectivity we need to construct explicit automorphisms of $T$ and to compute their portraits.

\begin{con}[Basic automorphisms] \label{con:basic.automorphisms}
For every $q \in Q$, every $\sigma \in \Sym(q)$ acts by rooted automorphisms on $A$ by fixing all vertices and permuting the edges emanating from $q$, inducing the permutation $\sigma$ on labels. This provides an embedding $\Sym(q) \hookrightarrow \Aut_0(A)$. We can thus lift every $\sigma \in \Sym(q)$ to a unique automorphism $g_{\sigma}$ of $T$ whose portrait is given by
\begin{equation}\label{Portraitgsigma}
\pre{v}{\sigma(g_{\sigma})} = \left\{\begin{array}{rl}\sigma,& \text{if }\pi_0(v) = q,\\ \id, & \text{else}.\end{array}\right.
\end{equation}
More generally, given $q \in Q$, $\sigma \in \Sym(q)$ and $w \in V(T)$ we define
\begin{equation}\label{gsigmaw}
g_{\sigma}^w := \mathrm{ret}_w(g_\sigma) \in \Rist(w).
\end{equation}
By \eqref{Portraitgsigma} and \eqref{PortraitLocal}, the portrait of $g_{\sigma}^w$ is given by
\begin{equation}\label{BasicPortrait}
\pre{v}{\sigma(g_{\sigma}^w)} = \left\{\begin{array}{rl}\sigma,& \text{if }v \in T_w \text{ and }\pi_0(v) = q,\\ \id, & \text{else}.\end{array}\right.
\end{equation}
Note that $g_{\sigma}^w$ is non-trivial if and only if $\sigma$ is non-trivial and $q$ is a state of $\mathcal A[\pi_0(w)]$.
\end{con}

\begin{theorem}\label{PortraitBijection+}
Let $G < \Aut(T)$ denote the closure of the subgroup generated by the elements $g_{\sigma}^w$, where $\sigma \in \Sym(q) \setminus\{\Id\}$ for some state $q$ of $\mathcal A[\pi_0(w)]$. Then $\sigma|_G$ is surjective.
\end{theorem}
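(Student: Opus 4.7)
The plan is to show that every admissible portrait $\tau = (\pre{v}\tau)_{v \in V(T)}$ is realized as $\sigma(g)$ for some $g \in G$, constructed as a convergent infinite product of basic automorphisms. I would proceed by an inductive, level-by-level correction: at step $n$, one applies a finite product of basic automorphisms $g_{\alpha_v}^v$ (with $v$ ranging over $V(T)_n$) to align the local permutations at level $n$ with $\tau$, without disturbing the already-correct values at lower levels.

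Concretely, set $g_{-1} := \mathrm{Id}$ and suppose $h_0, \dots, h_{n-1} \in G$ have been constructed so that $g_{n-1} := h_0 \cdots h_{n-1}$ satisfies $\pre{v}\sigma(g_{n-1}) = \pre{v}\tau$ for all $v$ of length less than $n$. For each $v \in V(T)_n$ define
\[
\alpha_v := \pre{v}\sigma(g_{n-1})^{-1} \circ \pre{v}\tau,
\]
which lies in $\Sym(\pi_0(v))$ since $\tau$ is admissible and portraits of automorphisms are admissible by Theorem \ref{thm:main.result.on.local.permutations}. Now set
\[
h_n := \prod_{v \in V(T)_n} g_{\alpha_v}^v.
\]
Each factor is a basic automorphism, so $h_n \in G$, and each factor lies in $\Rist(v) \subseteq \Rist(n)$, so $h_n \in \Rist(n)$. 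By formula \eqref{BasicPortrait}, the factor $g_{\alpha_{v'}}^{v'}$ with $v' \neq v$ contributes $\mathrm{Id}$ at $v$ since $v \notin T_{v'}$ (distinct level-$n$ vertices have disjoint subtrees), while $g_{\alpha_v}^v$ contributes $\alpha_v$ at $v$. Hence $\pre{v}\sigma(h_n) = \alpha_v$. Applying Lemma \ref{RistPortrait} with $h_n \in \Rist(n)$, for $v \in V(T)_n$ we obtain
\[
\pre{v}\sigma(g_n) = \pre{v}\sigma(g_{n-1}) \circ \pre{v}\sigma(h_n) = \pre{v}\sigma(g_{n-1}) \circ \alpha_v = \pre{v}\tau,
\]
while for $|v|_\Sigma < n$ the same lemma gives $\pre{v}\sigma(g_n) = \pre{v}\sigma(g_{n-1}) = \pre{v}\tau$ inductively.

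Finally, Proposition \ref{RigidStab}(iv) guarantees that the infinite product $g := \prod_{n=0}^\infty h_n$ converges in $\Aut(T)$, and Corollary \ref{PortraitInfProd} gives $\sigma_N(g) = \sigma_N(g_N) = (\pre{v}\tau)_{v \in V(T)_N}$ for every $N$, so $\sigma(g) = \tau$. Since every partial product $g_N$ lies in $G$ and $G$ is closed in $\Aut(T)$, we conclude $g \in G$, proving surjectivity of $\sigma|_G$. The main delicate point, which the localization formula \eqref{BasicPortrait} resolves cleanly, is ensuring that basic automorphisms used to correct different level-$n$ vertices act in disjoint subtrees and hence do not interfere with each other's portraits, while also not destroying the work done at earlier levels --- both facts follow from the support description of $g_{\sigma}^w$ and from Lemma \ref{RistPortrait}.
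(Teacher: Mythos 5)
Your proof is correct and follows essentially the same route as the paper: you construct the preimage of a given admissible portrait as a convergent infinite product of level-$n$ corrections $h_n \in \Rist(n) \cap G$, each a product of basic automorphisms $g_{\alpha_v}^v$ with disjoint supports, and appeal to Proposition~\ref{RigidStab}(iv), Lemma~\ref{RistPortrait}, and Corollary~\ref{PortraitInfProd} exactly as the paper does. The only cosmetic difference is that you inline what the paper isolates as Lemma~\ref{gammaSection} (surjectivity of $\sigma_n\vert_{\Rist(n)\cap G}$) directly into the inductive step, choosing the correcting permutation $\alpha_v = \pre{v}\sigma(g_{n-1})^{-1}\circ \pre{v}\tau$ componentwise rather than invoking surjectivity abstractly.
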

Note that Theorem \ref{PortraitBijection} is an immediate consequence. Furthermore, injectivity of the portrait map on $\Aut(T)$ yields the following corollary.
\begin{corollary}\label{Generators}
    The group $\Aut(T)$ is topologically generated by  the elements $g_{\sigma}^w$ from \eqref{gsigmaw}, where $\sigma \in \Sym(q) \setminus\{\Id\}$ for some state $q$ of $\mathcal A[\pi_0(w)]$.
\end{corollary}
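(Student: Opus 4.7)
The plan is to establish surjectivity of $\sigma|_G$ by showing that any admissible portrait $\tau = (\pre{v}\tau)_{v \in V(T)}$ arises from an element of $G$. I will construct such a $g \in G$ as a convergent infinite product of basic automorphisms from Construction \ref{con:basic.automorphisms}, realizing $\tau$ one level at a time and correcting the portrait level by level.

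Inductively I construct a sequence $(g_n)_{n \geq 0}$ in $G$ such that $\pre{v}\sigma(g_n) = \pre{v}\tau$ for every $v \in V(T)$ with $|v|_\Sigma \leq n$. For the base case, set $g_0 := g_{\pre{\epsilon}\tau}^{\epsilon} = g_{\pre{\epsilon}\tau}$; formula \eqref{Portraitgsigma} gives that its portrait at $\epsilon$ equals $\pre{\epsilon}\tau$, which settles level $0$. For the inductive step, suppose $g_n$ has been constructed. For each $v \in V(T)_{n+1}$, both $\pre{v}\sigma(g_n)$ (by Theorem \ref{thm:main.result.on.local.permutations}) and $\pre{v}\tau$ (by admissibility of $\tau$) lie in $\Sym(\pi_0(v))$, so the correction
\[
\rho_v := \pre{v}\sigma(g_n)^{-1} \circ \pre{v}\tau
\]
is itself an element of $\Sym(\pi_0(v))$, and the associated basic automorphism $g_{\rho_v}^v \in \Rist(v)$ lies in $G$. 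Define
\[
h_{n+1} := \prod_{v \in V(T)_{n+1}} g_{\rho_v}^v \in \Rist(n+1) \cap G;
\]
this is well-defined by Proposition \ref{RigidStab}(ii), since the rigid stabilizers $\Rist(v)$ at a fixed level commute. By \eqref{BasicPortrait}, the portrait of $g_{\rho_v}^v$ is $\rho_v$ at $v$ and is trivial at every other $w \in V(T)_{n+1}$ (since $w \notin T_v$), so $\pre{v}\sigma(h_{n+1}) = \rho_v$ for all $v \in V(T)_{n+1}$. Applying Lemma \ref{RistPortrait} to $g_{n+1} := g_n h_{n+1}$ gives $\pre{v}\sigma(g_{n+1}) = \pre{v}\sigma(g_n)$ whenever $|v|_\Sigma \leq n$, while $\pre{v}\sigma(g_{n+1}) = \pre{v}\sigma(g_n) \circ \rho_v = \pre{v}\tau$ when $|v|_\Sigma = n+1$, closing the induction.

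Finally, by construction $g_n = g_0 h_1 \cdots h_n$ with $h_k \in \Rist(k)$, so Proposition \ref{RigidStab}(iv) supplies a limit $g := g_0 \prod_{k \geq 1} h_k \in \Aut(T)$, and $g$ lies in the closed subgroup $G$. For any $v$ with $|v|_\Sigma = k$, iterated application of Lemma \ref{RistPortrait} to each subsequent factor $h_j$ (with $j > k$) shows $\pre{v}\sigma(g) = \pre{v}\sigma(g_k) = \pre{v}\tau$, so $\sigma(g) = \tau$. The only non-mechanical step is ensuring at each stage that the correction $\rho_v$ is again admissible --- this is precisely where the admissibility hypothesis on $\tau$ combines with Theorem \ref{thm:main.result.on.local.permutations}; once this is in hand, the construction reduces to bookkeeping with basic automorphisms and the product formula of Lemma \ref{RistPortrait}.
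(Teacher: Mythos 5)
Your proposal is correct and takes essentially the same route as the paper: the Corollary is derived from Theorem~\ref{PortraitBijection+}, whose proof builds exactly the same convergent infinite product $g_0\prod_{k\geq 1} h_k$ of elements in $\Rist(k)\cap G$, with Lemma~\ref{gammaSection} supplying the level-$k$ correction that you construct explicitly inline via the basic automorphisms $g_{\rho_v}^v$. The only presentational difference is that you unfold Lemma~\ref{gammaSection} and Corollary~\ref{PortraitInfProd} into the induction rather than citing them.
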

Concerning the proof of Theorem \ref{PortraitBijection+} we observe:
\begin{lemma}\label{gammaSection} 
For every $n \in \mathbb N_0$ the map
\[
\sigma_n: \Rist(n) \cap G \to \Sym(n), \quad g \mapsto (\pre{v}\sigma(g))_{v \in V(T)_n}
\]
is surjective.
\end{lemma}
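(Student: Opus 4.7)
The plan is to exhibit an explicit pre-image for every element $\tau = (\tau_v)_{v \in V(T)_n}$ of $\Sym(n) = \prod_{v \in V(T)_n} \Sym(\pi_0(v))$. The natural candidate is the finite product
\[
g_\tau := \prod_{v \in V(T)_n} g_{\tau_v}^v
\]
of basic automorphisms from Construction \ref{con:basic.automorphisms}. Since $T$ is locally finite, the index set $V(T)_n$ is finite; and since the subtrees $T_v$ are pairwise disjoint for distinct $v \in V(T)_n$, the factors lie in pairwise commuting copies of $\Rist(v) \subset \Rist(n)$ by Proposition \ref{RigidStab}(ii). Hence $g_\tau$ is a well-defined element of $\Rist(n)$ independent of the order of multiplication.

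The first thing I would check is that $g_\tau$ lies in $G$. Each factor $g_{\tau_v}^v$ is either trivial (when $\tau_v = \mathrm{Id}$) or belongs to the prescribed topological generating set: indeed, $\pi_0(v)$ is by construction the root, hence in particular a state, of the reachable closure $\mathcal A[\pi_0(v)]$, so $\tau_v \in \Sym(\pi_0(v)) \setminus \{\mathrm{Id}\}$ qualifies. Therefore $g_\tau \in \Rist(n) \cap G$.

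The main computation is then $\sigma_n(g_\tau) = \tau$. Iterated application of Lemma \ref{RistPortrait} to elements of $\Rist(n)$ yields multiplicativity of the level-$n$ portraits,
\[
\pre{w}\sigma(g_\tau) \;=\; \prod_{v \in V(T)_n} \pre{w}\sigma(g_{\tau_v}^v) \qquad \text{for every } w \in V(T)_n,
\]
and formula \eqref{BasicPortrait} shows that the factor indexed by $v$ contributes non-trivially at $w$ only when $w \in T_v$ and $\pi_0(w) = \pi_0(v)$. For two vertices of the same length $n$, the inclusion $w \in T_v$ forces $w = v$, at which point the contribution is exactly $\tau_w$. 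Thus $\pre{w}\sigma(g_\tau) = \tau_w$ for all $w \in V(T)_n$, which proves the desired surjectivity.

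There is no real obstacle in the argument; the only point that requires care is the interplay between the product structure and the basic-portrait formula \eqref{BasicPortrait}. This is precisely the payoff of working at level $n$ rather than above: the basic automorphisms $g_{\tau_v}^v$ anchored at distinct $v \in V(T)_n$ have level-$n$ portraits supported on disjoint singletons, so no interference occurs and the pre-image can be assembled one vertex at a time.
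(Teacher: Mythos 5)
Your proof is correct and takes essentially the same route as the paper: both define $g_\tau = \prod_{v \in V(T)_n} g_{\tau_v}^v$ and read off the level-$n$ portrait from formula \eqref{BasicPortrait}. You supply two small verifications that the paper leaves implicit (that each nontrivial factor lies in the prescribed generating set because $\pi_0(v)$ is itself a state of $\mathcal A[\pi_0(v)]$, and the multiplicativity of $\sigma_n$ via iterated use of Lemma \ref{RistPortrait}), but the underlying argument is identical.
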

\begin{proof} Given $\sigma = (\sigma_v)_{v \in V(T)_n} \in \Sym(n)$ we define
\[
g:= \prod_{v\in V(T)_n} g_{\sigma_v}^v \in \Rist(n)\cap G.
\]
If $v \in V(T)_n$, then $g$ acts on $T_v$ by $g_{\sigma_v}^v$, and hence we deduce from \eqref{BasicPortrait} that
\[
\pre{v}{\sigma(g)} = \sigma_v.
\]
This proves that $\sigma_n(g) = \sigma$. 
\end{proof}

Now the theorem follows:

\begin{proof}[Proof of Theorem \ref{PortraitBijection+}]  Let $\sigma =  (\pre{v}\sigma)_{v \in V(T)} \in \prod_{v \in V(T)} \Sym(\pi_0(v))$ and abbreviate $\sigma^{(n)} :=  (\pre{v}\sigma)_{v \in V(T)_n} \in \prod_{v \in V(T)_n} \Sym(\pi_0(v))$. By Lemma \ref{gammaSection} we can inductively find elements $g_n \in \Rist(n) \cap G$ such that
\[
\sigma_0(g_0) = \sigma^{(0)} \qand \sigma_{n}(g_n) = \sigma_n(g_1 \cdots g_{n-1})^{-1} \sigma^{(n)}.
\]
By Proposition \ref{RigidStab}.(iv) the product $\prod_{n=1}^\infty g_n$ converges to some $g \in G$, and by 
Corollary \ref{PortraitInfProd} we have $\sigma_n(g) = \sigma^{(n)}$ for all $n \in \mathbb N$ and hence $\sigma(g) = \sigma$.
\end{proof}
\subsection{Structure of the automorphism group}
We can now describe the structure of the automorphism group $\Aut(T)$ completely in terms of the geometrically minimal automaton $\mathcal A$. For this we recall how $\Sym(n)$ acts on the subtrees $\{ T_q \}_{q \in V(T)_{n+1}}$.

\begin{remark} \label{rem:action.of.Sym.on.tree}
Let $\tau \in \Sym(n)$. By Theorem \ref{PortraitBijection+} there then exist a unique $h(\tau) \in \Aut(T)$ with portrait given by
    \[ \pre{v}{\sigma(h(\tau))} = \begin{cases}
        \tau\vert_{\Sigma(v)}, \quad & \text{if } v \in V(T)_{n},\\
        \Id, & \text{if } v \notin V(T)_{n}.
    \end{cases} \]
In fact, $h(\tau) \in \Rist(n)$ and hence $h(\tau)$ acts on $\coprod_{v \in V(T)_{n+1}} T_v$. Since the portrait of $h(\tau)$ is trivial outside of the $V(T)_{n+1}$, Lemma \ref{RistPortrait} shows us that the map $\tau \mapsto h(\tau)$ is a homomorphism.
\end{remark}

We use this to define an action of $\Sym(n)$ on $\Rist(n+1)$.

\begin{notation}  Given $\tau \in \Sym(n)$ and $g \in \Rist(n+1)$ we define the element $\tau \cdot g \in \Rist(n+1)$ by the portrait
\begin{equation}\label{PortraitWeirdAction}\pre{v}{\sigma( \tau \cdot g )} = \begin{cases}
    \pre{h(\tau)^{-1}(v)}{ \sigma(g) }, \quad & \text{if } |v|_\Sigma \geq n+1,\\
    \Id, & \text{if } |v|_\Sigma \leq n.
\end{cases} \end{equation}
Since $\tau \mapsto h(\tau)$ is a homomorphism, this defines an action on $\Rist(n+1)$. Note that this action is exactly the one obtained by taking an element $g \in \Rist(n+1)$, which consists of a collection of automorphisms $(g_v)_{v \in V(T)_{n+1}}$, and permuting these automorphisms. In particular, this action restricts to the actions $\Sym(n) \curvearrowright \Sym(n+k)$ for every $k \geq 1$ described in Section \ref{subsec:structure.of.automorphism.group}.
\end{notation}

\begin{proposition}\label{Semidirect1} We have a group isomorphism
\begin{equation}\label{iotan}
\iota_n: \Rist(n) \to \Sym(n) \ltimes \Rist(n+1), \quad g \mapsto (\sigma_n(g), \mathrm{ret}_{n+1}(g)),
\end{equation}
where the underlying action of $\Sym(n)$ on $\Rist(n+1)$ is given by \eqref{PortraitWeirdAction}. \end{proposition}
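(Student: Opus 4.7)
My plan is to exhibit an explicit two-sided inverse $j : \Sym(n) \ltimes \Rist(n+1) \to \Rist(n)$ of $\iota_n$ and verify the group isomorphism property on $j$. Set
\[
j(\tau, r) := h(\tau)\, r,
\]
where $h(\tau) \in \Rist(n)$ is the element introduced in Remark~\ref{rem:action.of.Sym.on.tree}. Since $h(\tau) \in \Rist(n)$ and $\Rist(n+1) \subset \Rist(n)$, the map $j$ indeed lands in $\Rist(n)$.

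The crucial preliminary step is to interpret the ``weird action'' \eqref{PortraitWeirdAction} as conjugation by $h(\tau)$. Concretely, for $\tau \in \Sym(n)$ and $r \in \Rist(n+1)$, I would compute $\pre{v}\sigma(h(\tau)\, r\, h(\tau)^{-1})$ using the general composition rule $\pre{v}\sigma(g_1 g_2) = \pre{g_2(v)}\sigma(g_1) \circ \pre{v}\sigma(g_2)$ that follows directly from \eqref{OriginalPortrait}. Because $h(\tau)^{\pm 1}$ has trivial portrait at every vertex of level $\neq n$, and because the relevant intermediate vertices live at level $\geq n+1$, the outer factors reduce to the identity, and one reads off that $h(\tau) r h(\tau)^{-1}$ has portrait $\pre{h(\tau)^{-1}(v)}\sigma(r)$ at every $v$ with $|v|_\Sigma \geq n+1$ and trivial portrait below; this is exactly the definition of $\tau \cdot r$. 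Once this identification is made, the homomorphism property of $j$ is a routine manipulation:
\[
j(\tau_1, r_1)\, j(\tau_2, r_2) = h(\tau_1) r_1 h(\tau_2) r_2 = h(\tau_1)h(\tau_2)\bigl(h(\tau_2)^{-1} r_1 h(\tau_2)\bigr) r_2 = h(\tau_1 \tau_2)\cdot(\tau_2^{-1}\cdot r_1) r_2,
\]
which is $j$ applied to $(\tau_1 \tau_2,\, (\tau_2^{-1}\cdot r_1) r_2) = (\tau_1, r_1)(\tau_2, r_2)$ in the semidirect product.

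To establish bijectivity I would prove the following splitting lemma: every $g \in \Rist(n)$ can be written \emph{uniquely} as $h(\tau)\, r$ with $\tau \in \Sym(n)$ and $r \in \Rist(n+1)$, namely $\tau = \sigma_n(g)$ and $r = h(\sigma_n(g))^{-1}g$. For existence I would compute the portrait of $h(\sigma_n(g))^{-1} g$: at levels $<n$ both factors are trivial, at level $n$ one gets $\sigma_v^{-1}\circ \sigma_v = \Id$, while at levels $\geq n+1$ the factor $h(\sigma_n(g))^{-1}$ contributes trivially, so the product lies in $\Rist(n+1)$ by Theorem~\ref{PortraitBijection}. Uniqueness is immediate because a nontrivial product $h(\tau)^{-1}h(\tau')$ cannot lie in $\Rist(n+1)$ unless $\tau=\tau'$. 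This precisely shows that $j$ is a bijection.

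Finally, to verify $j^{-1} = \iota_n$, take $g = h(\tau) r$ and check $\sigma_n(g) = \tau$ and $\mathrm{ret}_{n+1}(g) = r$. At a vertex $v \in V(T)_n$, $r(v) = v$ (since $r \in \Rist(n+1)$), so the product formula for portraits gives $\pre{v}\sigma(g) = \pre{v}\sigma(h(\tau))\circ\pre{v}\sigma(r) = \tau_v\circ\Id = \tau_v$; hence $\sigma_n(g)=\tau$. For $v$ with $|v|_\Sigma \geq n+1$, the same computation (now with $h(\tau)$ contributing trivially at the vertex $r(v)$, which lies at level $\geq n+1$) gives $\pre{v}\sigma(g)=\pre{v}\sigma(r)$, and combined with the triviality of portraits of both $g$ and $r$ at levels $\leq n$ in $\Rist(n+1)$, \eqref{PortraitLocal}/\eqref{PortraitRistn} force $\mathrm{ret}_{n+1}(g) = r$.

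The main obstacle I anticipate is keeping the bookkeeping of portraits clean when verifying $\tau\cdot r = h(\tau)r h(\tau)^{-1}$: one must carefully track which vertices $h(\tau)^{\pm 1}$ can ``see'' via the composition formula, since it only has support at level $n$. Once that identification is safely established, the rest reduces to short portrait computations using the building blocks assembled earlier in the section.
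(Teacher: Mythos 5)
Your proposal is correct and, up to organization, takes the same route as the paper: both arguments hinge on (a) the unique decomposition $g = h(\sigma_n(g))\cdot\bigl(h(\sigma_n(g))^{-1}g\bigr)$ with second factor in $\Rist(n+1)$, and (b) the portrait computation showing that conjugation by $h(\tau)$ realizes the action \eqref{PortraitWeirdAction}. The paper packages step (a) as a split exact sequence $1\to\Rist(n+1)\to\Rist(n)\xrightarrow{\sigma_n}\Sym(n)\to1$ with section $\tau\mapsto h(\tau)$ and then identifies the abstract conjugation action with the weird action, whereas you build the explicit two-sided inverse $j(\tau,r)=h(\tau)r$ from scratch; the difference is cosmetic. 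One tiny imprecision: in your existence argument, citing Theorem~\ref{PortraitBijection} to conclude $h(\sigma_n(g))^{-1}g\in\Rist(n+1)$ isn't the cleanest reference --- what you actually use is that triviality of the portrait at all levels $\le n$ forces the element to fix all vertices of level $\le n+1$ (via \eqref{LocalPermutation}), which is precisely the membership criterion for $\Rist(n+1)$; no appeal to the bijectivity of the portrait map is needed at that point.
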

\begin{proof} It follows from Lemma \ref{gammaSection} and Construction \ref{Defret} that for every $n \in \mathbb N_0$ we have a split exact sequence
\begin{equation}
\begin{tikzcd}
    1 \arrow{r} & \Rist(n+1) \arrow{r} &\Rist(n) \arrow{r}{\sigma_n}\arrow[bend left=22]{l}{\ret_{n+1}} &  \Sym(n)\arrow{r} & 1,
\end{tikzcd}
\end{equation}
hence there is an isomorphism $\iota_n$ as in \eqref{iotan} with the action of $\sigma = (\sigma_v)_{v \in V(T)_n} \in \Sym(n)$ on $g' \in \Rist(n+1)$ given by
\[
\sigma \odot g' = hg'h^{-1},
\]
where $h = g \ret_{n+1}(g)^{-1}$ for some (hence any) preimage $g \in \Rist(n)$ of $\sigma$ under $\sigma_n$. To understand this action, we observe that, by Corollary \ref{PortraitInfProd} the portrait of $h$ is given by
\begin{equation}\label{eq:Portrait.of.element.in.splitting} \pre{v}{\sigma(h)} = \begin{cases}
    \sigma_v, \quad & \text{if } v \in V(T)_n,\\
    \Id, & \text{else}.
\end{cases} \end{equation}
By Theorem \ref{PortraitBijection+} this shows that $h = h(\sigma)$. Since $g' \in \Rist(n+1)$, we compute
\[ \pre{v}{\sigma( g' h^{-1} )} = \pre{h^{-1}(v)}{\sigma(g')} \circ \pre{v}{\sigma(h)} = \begin{cases}
            \pre{h^{-1}(v)}{\sigma(g')} \quad &\text{if } v \notin V(T)_n,\\
            \pre{v}{\sigma(h^{-1})} & \text{if } v \in V(T)_n.
        \end{cases}
\]
Since $g'h^{-1}$ fixes $V(T)_n$ (both elements are in $\Rist(n)$), we also compute
\[\pre{v}{\sigma( h g' h^{-1} )} = \pre{g' h^{-1}(v)}{\sigma(h)} \circ \pre{v}{\sigma(g' h^{-1})} = \begin{cases}
    \pre{h^{-1}(v)}{\sigma(g')}, \quad & \text{if } v \notin V(T)_n\\
    \Id, & \text{if } v \in V(T)_n.
\end{cases} \]
Since $\pre{v}{\sigma(g)} = \Id$ for every $v \in V(T)_k$ for $k \leq n$, we can rewrite this as
\[ \pre{v}{\sigma(hg'h^{-1})} = \begin{cases} \pre{h^{-1}(v)}{\sigma(g')}, \quad & \text{if } v \in V(T)_k, k \geq n+1,\\
\Id, & \text{if } v \in V(T)_k, k \leq n.
\end{cases} \]
Comparing to \eqref{PortraitWeirdAction} we see that $\sigma \odot g' = \sigma \cdot g'$.
\end{proof}
Since $\Aut(T) = \Rist(0)$ we deduce from Proposition \ref{Semidirect1} that
\begin{equation}
\Aut(T) \cong \Sym(0) \ltimes \Rist(1) \cong \Sym(0) \ltimes (\Sym(1) \ltimes \Rist(2)) \cong \dots
\end{equation}
In particular, for all $n \in \mathbb N_0$ we have
\[
\Aut(T)/\Rist(n+1) \cong \Sym^{\mathcal A}_{\leq n} := \Sym(0) \ltimes (\dots \ltimes(\Sym(n-1) \ltimes \Sym(n))\dots).
\]
\begin{theorem}[Structure of the automorphism group]\label{Structure} The natural map
\[
\Aut(T)\to \lim_{\leftarrow}\Aut(T)/\Rist(n+1), \quad g \mapsto (g\Rist(n+1))_{n \geq 0}
\]
is an isomorphism of topological groups. In particular, as topological groups,
\begin{equation}
\Aut(T) \cong \lim_{\leftarrow} \Sym^{\mathcal A}_{\leq n}.
\end{equation}
\end{theorem}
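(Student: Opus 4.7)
My plan is to verify the natural map $\Phi: \Aut(T) \to \lim_\leftarrow \Aut(T)/\Rist(n+1)$ is a continuous group homomorphism, then establish injectivity, surjectivity, and finally that it is an open map (and hence a homeomorphism). That $\Phi$ is a group homomorphism is immediate from the universal property of the inverse limit, since each projection $\Aut(T) \to \Aut(T)/\Rist(n+1)$ is a homomorphism and these are compatible with the projections $p_n$ in the inverse system (as $\Rist(n+1) \supset \Rist(n+2)$). The second claim of the theorem will then follow from Proposition \ref{Semidirect1}: iterating the isomorphism $\Rist(n) \cong \Sym(n) \ltimes \Rist(n+1)$ and quotienting by $\Rist(n+1)$ yields $\Aut(T)/\Rist(n+1) \cong \Sym^{\mathcal A}_{\leq n}$, and these isomorphisms are compatible with the natural projections.

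For injectivity, I would observe that the kernel of $\Phi$ is exactly $\bigcap_{n \geq 0} \Rist(n+1)$, which equals $\{e\}$ by Proposition \ref{RigidStab}(iii). For surjectivity, I would take an arbitrary element $(g_n \Rist(n+1))_{n \geq 0}$ of the inverse limit and construct a preimage inductively using Proposition \ref{RigidStab}(iv). Specifically, set $h_0 := g_0$; having chosen $h_0, \ldots, h_{n-1}$ with $h_0 h_1 \cdots h_{n-1}$ representing the correct coset modulo $\Rist(n)$, the compatibility $g_n \Rist(n) = g_{n-1} \Rist(n)$ means $(h_0 \cdots h_{n-1})^{-1} g_n \in \Rist(n)$, so I can choose $h_n \in \Rist(n)$ representing this coset. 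Then $\prod_{n=0}^\infty h_n$ converges in $\Aut(T)$ by Proposition \ref{RigidStab}(iv), and by construction it projects to $g_n \Rist(n+1)$ for every $n$.

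For the topological statement, recall that the topology on $\Aut(T)$ is the topology of pointwise convergence on vertices; equivalently, a neighborhood basis of the identity is given by the level stabilizers, and these in turn contain the rigid level stabilizers $\Rist(n+1)$ which also form a neighborhood basis of the identity (since an element fixing everything outside $T_v$ for every $v \in V(T)_{n+1}$ is $\Rist(n+1)$, and conversely any automorphism fixing the first $n+1$ levels can be decomposed into pieces acting on each $T_v$). Meanwhile, $\lim_\leftarrow \Aut(T)/\Rist(n+1)$ carries the inverse limit topology, where each $\Aut(T)/\Rist(n+1)$ is discrete (being a quotient of a finite group $\Sym^{\mathcal A}_{\leq n}$), so a neighborhood basis at the identity of the inverse limit is given by the kernels of the projections onto the finite quotients. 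Under $\Phi$, the preimage of such a kernel is precisely $\Rist(n+1)$, so $\Phi$ is both continuous and open at the identity, hence a homeomorphism.

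The main obstacle I anticipate is the surjectivity step, specifically checking that the inductively chosen partial products converge to a well-defined automorphism and that this automorphism lies in the correct coset at every finite level; both points are handled by Proposition \ref{RigidStab}, but care is needed to ensure that the increments $h_n$ genuinely lie in $\Rist(n)$ (not just $\Aut(T)$) so that the convergence and compatibility results apply. Once this is established, the topological identification is essentially formal from the fact that both sides are totally disconnected and have matching neighborhood bases of the identity.
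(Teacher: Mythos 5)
Your proof is correct, but it takes a genuinely different route from the paper's. The paper's proof factors the natural map through the portrait map $\sigma : \Aut(T) \to \prod_{v \in V(T)} \Sym(\pi_0(v))$, observes that the three maps in the chain $\Aut(T) \to \lim_\leftarrow \Aut(T)/\Rist(n+1) \to \lim_\leftarrow \Sym^{\mathcal A}_{\leq n} \to \prod_v \Sym(\pi_0(v))$ compose to the portrait map, invokes Theorem \ref{PortraitBijection} (and Proposition \ref{Semidirect1}) to conclude each map is a bijection, and then finishes the topological statement by appealing to the open mapping theorem. You instead give a direct inverse-limit argument: the homomorphism property comes from the universal property of the limit, injectivity from $\bigcap_n \Rist(n) = \{e\}$ (Proposition \ref{RigidStab}(iii)), surjectivity from an explicit telescoping construction whose convergence is guaranteed by Proposition \ref{RigidStab}(iv), and the homeomorphism property from the observation that $\Rist(n+1)$ coincides with the $(n+1)$-th level stabilizer and hence the $\Rist(n+1)$ form a neighborhood basis of the identity matching the canonical basis of the inverse limit. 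What you gain is that your proof is self-contained and does not rely on the portrait-map machinery at all; what the paper gains is brevity by reusing Theorem \ref{PortraitBijection}, whose proof in turn already contains the convergent-product argument you reconstruct (compare the proof of Theorem \ref{PortraitBijection+}). Your explicit verification of the neighborhood bases is also a cleaner way of getting the topological isomorphism than invoking the open mapping theorem, which the paper leaves to the reader to justify.
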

\begin{proof} There is a natural bijection
\[
\iota: \lim_{\leftarrow} \Sym_{\leq n} \to\prod_{v \in V(T)} \Sym(\pi_0(v)),
\]
such that the composition
\[
\Aut(T) \to  \lim_{\leftarrow}\Aut(T)/\Rist(n+1) \to  \lim_{\leftarrow} \Sym_{\leq n} \xrightarrow{\iota} \prod_{v \in V(T)} \Sym(\pi_0(v))
\]
is just the portrait map. By Theorem \ref{PortraitBijection} This composition is bijective, as are the second and third map. This implies that also the first map is a bijection, which shows that the maps in the theorem are continuous group isomorphisms. They are then even isomorphisms of topological groups by the open mapping theorem.
\end{proof}
We record the following consequence of the proof:
\begin{corollary}
    For every $n \geq 1$, the isomorphism from Theorem \ref{Structure} restricts to an isomorphism
    \[ \pushQED{\qed}\Rist(n) \cong {\lim_{\leftarrow}}( \Sym(n) \ltimes (\cdots(\Sym(n+1) \ltimes(\dots \ltimes \Sym(n+k))\cdots)). \qedhere\popQED\]
\end{corollary}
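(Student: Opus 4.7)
The plan is to parallel the proof of Theorem \ref{Structure}, replacing the filtration $\Aut(T) = \Rist(0) > \Rist(1) > \cdots$ by its shifted version $\Rist(n) > \Rist(n+1) > \cdots$. First, I would iterate Proposition \ref{Semidirect1}: since $\Rist(m) \cong \Sym(m) \ltimes \Rist(m+1)$ for every $m \geq 0$, an easy induction on $k$ yields, for every $n \geq 1$ and every $k \geq 0$,
\[
\Rist(n)/\Rist(n+k+1) \;\cong\; \Sym(n) \ltimes (\Sym(n+1) \ltimes (\cdots \ltimes \Sym(n+k))\cdots),
\]
where the actions are the ones induced by \eqref{PortraitWeirdAction}, restricted to levels $\geq n$.

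Second, I would show that the canonical map $\Rist(n) \to \lim_{\leftarrow} \Rist(n)/\Rist(n+k+1)$ is an isomorphism of topological groups. Injectivity comes from Proposition \ref{RigidStab}(iii), which gives $\bigcap_{k} \Rist(n+k) = \{e\}$, and surjectivity follows from Proposition \ref{RigidStab}(iv): given a compatible sequence of cosets, we may lift it to a sequence $(g_k)$ with $g_k \in \Rist(n+k)$ whose infinite product converges in $\Aut(T)$ and in fact lies in $\Rist(n)$ because every partial product does. This continuous bijection is then an isomorphism of topological groups by the open mapping theorem, exactly as in the proof of Theorem \ref{Structure}.

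Finally, I would verify that this isomorphism is just the restriction of the isomorphism from Theorem \ref{Structure}. Under the portrait map, the subgroup $\Rist(n)$ corresponds, by \eqref{PortraitRistn}, exactly to the subset of $\prod_{v \in V(T)} \Sym(\pi_0(v))$ consisting of portraits that are trivial on all vertices of length $< n$; this subset is in natural bijection with $\prod_{v \in V(T),\, |v|_\Sigma \geq n} \Sym(\pi_0(v))$. The bijection $\iota$ from the proof of Theorem \ref{Structure} then restricts to a bijection between $\lim_{\leftarrow}(\Sym(n) \ltimes (\cdots \ltimes \Sym(n+k))\cdots)$ and this very subset, so the claimed isomorphism is simply the restriction asserted in the corollary.

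The only mild obstacle is bookkeeping: one must track precisely how the action of $\Sym(n+j)$ on the tail of the iterated semidirect product matches the restriction of \eqref{PortraitWeirdAction}, and that the projections in the inverse system on the right-hand side correspond to the projections $\Rist(n)/\Rist(n+k+2) \twoheadrightarrow \Rist(n)/\Rist(n+k+1)$ on the left. No genuinely new ingredient beyond Propositions \ref{RigidStab} and \ref{Semidirect1} and the portrait description from Theorem \ref{PortraitBijection+} is required.
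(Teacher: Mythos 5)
Your proof is correct and follows precisely the route the paper intends: the paper records this corollary as an unproved ``consequence of the proof'' of Theorem \ref{Structure}, and you faithfully spell out that consequence by applying the shifted filtration $\Rist(n) > \Rist(n+1) > \cdots$, iterating Proposition \ref{Semidirect1}, invoking Proposition \ref{RigidStab}(iii)--(iv) for the inverse-limit identification, and using \eqref{PortraitRistn} to see that the portrait map carries $\Rist(n)$ onto exactly the portraits trivial below level $n$. No new ingredient beyond what the paper already deploys for Theorem \ref{Structure} is needed, so the proposal matches the paper's approach.
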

\begin{remark}
We observe that the group
\[
\Sym(\mathcal A) := \lim_{\leftarrow} \Sym^{\mathcal A}_{\leq n}
\]
can be described purely in terms of the minimal automaton $\mathcal A$: The underlying set of $\Sym(\mathcal{A})$ can be identified (via the portrait map) with the product $\prod_{v \in V(T)} \Sym(\pi_0(v))$, where
\[ \Sym(q) = \prod_{q' \in \mathcal{C}(q)} \Sym( \Sigma(q,q') ) \]
is a product of symmetric groups determined completely by the graph-structure of $\mathcal{A}$. Under this identification, the action of $\sigma = (\pre{v}{\sigma})_{v \in V(T)}$ on $T$ is given by \eqref{eq:definition.of.automorphism.via.portrait}, and the group multiplication can be described as follows: Given two elements $\sigma = (\pre{v}{\sigma})_{v \in V(T)}$, $\tau = (\pre{v}{\tau})_{v \in V(T)} \in \Sym(\mathcal{A})$, we have
\[ \pre{v}{(\sigma \circ \tau)} = \pre{\tau(v)}{\sigma} \circ \pre{v}{\tau} \text{ for all } v = \alpha_1 \dots \alpha_n \in V(T). \]
This gives us an explicit description of $\Aut(T) \cong \Sym(\mathcal{A})$ in terms of $\mathcal{A}$.
\end{remark}

\subsection{Size of the automorphism group} \label{subsec:size.of.automorphism.group}
Let $A$ be a graph. If $e_1, e_2$ are two distinct edges in $A$ with the same source and target, then we refer to $(e_1, e_2)$ as a \emph{double edge} of $A$. (There might be more than two edges between the same source and target. We only ever need to choose a pair of them and call it our double edge.) 
\begin{definition}
    A double edge $(e_1, e_2)$ in $A$ is \emph{recurrent} if there is a loop in $A$ that either contains $e_1$ or $e_2$, or from which the common source of $e_1$ and $e_2$ can be reached.
\end{definition}
\begin{theorem}
The following are equivalent for a self-similar tree $T$:
\begin{enumerate}[(i)]
\item Some covering quotient of $T$ contains a recurrent double edge.
\item The Myhill-Nerode graphs of $T$ contains a recurrent double edge.
\item $\Aut(T)$ is infinite.
\item $\Aut(T)$ is uncountable.
\end{enumerate}
\end{theorem}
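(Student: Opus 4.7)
Since (iv) $\Rightarrow$ (iii) is trivial and the Myhill-Nerode graph $\overline{A}$ is itself a covering quotient of $T$ (Construction \ref{MinQuot}), giving (ii) $\Rightarrow$ (i) for free, the plan is to establish (i) $\Rightarrow$ (ii), (ii) $\Rightarrow$ (iv), and (iii) $\Rightarrow$ (ii) (via contrapositive). The technical core is a \emph{lift-counting lemma}: for every state $q$ of $\overline{A}$, the set $\{v \in V(T) : \overline{\pi}_0(v) = q\}$ is infinite if and only if $q$ is reachable in $\overline{A}$ from some directed cycle. Identifying $T$ with the path-language tree (Example \ref{PLT}) identifies this fiber with the set of directed paths from $\overline{\epsilon}$ to $q$ in $\overline{A}$. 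Since $|\overline{Q}|$ is finite, a pigeonhole argument shows that infinitely many such paths exist iff some path revisits a vertex, i.e., passes through a cycle. Conversely, iterating a cycle on a path $\overline{\epsilon} \to q$ produces distinct lifts.

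\textbf{The implications (ii) $\Rightarrow$ (iv) and (iii) $\Rightarrow$ (ii).} For (ii) $\Rightarrow$ (iv), let $(e_1, e_2)$ be a recurrent double edge in $\overline{A}$ with common source $q$ and target $q'$. Then $|\Sigma(q, q')| \geq 2$, so the transposition $\tau$ of the two labels is a nontrivial element of $\Sym(\Sigma(q,q')) \leq \Sym(q)$. Recurrence together with the lift-counting lemma produces a countably infinite set $V_q \subseteq V(T)$ of lifts of $q$. By Theorem \ref{PortraitBijection}, $\Aut(T)$ is in bijection with $\prod_{v \in V(T)} \Sym(\overline{\pi}_0(v))$, which contains the uncountable subset $\{\Id, \tau\}^{V_q}$. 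For (iii) $\Rightarrow$ (ii) I argue the contrapositive. If $\overline{A}$ has no recurrent double edge, then whenever $\Sym(\overline{\pi}_0(v))$ is nontrivial, the state $\overline{\pi}_0(v)$ admits a (necessarily non-recurrent) double edge, hence is not reachable from any cycle in $\overline{A}$; the lift-counting lemma then gives only finitely many such $v$ per state. As $\overline{Q}$ is finite, the product $\prod_v \Sym(\overline{\pi}_0(v))$ has finite support with finite factors, so $\Aut(T)$ is finite.

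\textbf{The implication (i) $\Rightarrow$ (ii).} Let $A$ be a covering quotient of $T$ with a recurrent double edge $(e_1, e_2)$, and let $p : A \to \overline{A}$ be the covering morphism supplied by Proposition \ref{MinCovQ1}. By Corollary \ref{CoveringCrit}, $p$ restricts to a bijection on the outgoing edges at each vertex, so $p(e_1) \neq p(e_2)$ and these have common source and target, forming a double edge in $\overline{A}$. A cycle in $A$ containing $e_i$ projects to a closed walk in $\overline{A}$ containing $p(e_i)$; a standard walk-decomposition argument extracts from it a simple cycle through $p(e_i)$. Similarly, a cycle in $A$ from which $s(e_i)$ is reachable projects to a closed walk with the analogous property for $p_0(s(e_i))$, from which a witnessing cycle is extracted. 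Hence $(p(e_1), p(e_2))$ is recurrent.

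\textbf{Main obstacle.} The most delicate point is the cycle-extraction step in (i) $\Rightarrow$ (ii): the projection of a directed cycle under a covering morphism need not be a cycle, only a closed walk, and one must carefully argue that the relevant combinatorial features (passage through a prescribed edge, or reachability of a prescribed vertex) survive passage to an embedded simple cycle. The lift-counting lemma itself is routine once the identification with paths in $\overline{A}$ is made, but aligning its statement with the precise definition of recurrence---which splits into the two cases ``cycle through $e_i$'' and ``cycle reaching the common source''---requires book-keeping that I would handle explicitly when writing out the full proof.
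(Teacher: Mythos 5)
Your proof is correct and follows essentially the same approach as the paper's: the lift-counting lemma you isolate (the fiber of $\overline{\pi}_0$ over $q$ is infinite iff $q$ is reachable from a cycle) is exactly the implicit engine behind the paper's counting of permutable vertices via Theorem \ref{PortraitBijection}, and your transposition argument for uncountability is the same. The one place you add genuine detail is (i)$\Rightarrow$(ii) --- the paper dispatches (i)$\Leftrightarrow$(ii) with a bare citation of Proposition \ref{MinCovQ1}, whereas your edge-bijection plus walk-projection argument spells out why recurrent double edges descend under the covering $A\to\overline{A}$ --- and that reasoning is sound.
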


\begin{proof} Since (i)$\Leftrightarrow$(ii) holds by Proposition \ref{MinCovQ1}, it suffices to consider the Myhill-Nerode graph $A$ of $T$; we fix a universal covering $\pi: T \to A$ and call a vertex $v \in V(T)_n$ \emph{permutable} if $\pi_0(v)$ is the source of a double edge in $A$. 

The portrait map defines a bijection $\Aut(T) \rightarrow \prod_{n = 0}^{\infty} \Sym(n)$ (Theorem \ref{PortraitBijection}), and the group $\Sym(n)$ is only non-trivial if there exists a permutable vertex $v \in V(T)_n$. Thus $\Aut(T)$ is finite if there are only finitely many permutable vertices, and uncountable otherwise. This shows that (iii)$\Leftrightarrow$(iv).

If there is a recurrent double edge $(e_1, e_2)$ emanating from a vertex $q$ of $A$, then there are infinitely many paths from the root to $q$, hence $q$ has infinitely many preimages under $\pi_0$, which are permutable by definition, hence (ii)$\Rightarrow$(iv).
If there is no such edge, then there are only finitely paths to the initial vertex of each double edge, and hence there are only finitely many permutable vertices, which shows by contraposition that (iii)$\Rightarrow$(ii).
\end{proof}
\begin{corollary} The cardinality of the automorphism group of a self-sinilar tree is either finite or the cardinality of the continuum.
\end{corollary}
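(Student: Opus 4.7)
The plan is to show that the ``uncountable'' conclusion of the preceding theorem can be sharpened to ``of cardinality $2^{\aleph_0}$'' by producing matching upper and lower bounds. The finite case is already handled, so I would assume that $\Aut(T)$ is uncountable and aim to show $|\Aut(T)| = 2^{\aleph_0}$.

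For the \emph{upper bound}, I would invoke Theorem~\ref{PortraitBijection}, which identifies $\Aut(T)$ (as a set) with $\prod_{v \in V(T)} \Sym(\pi_0(v))$. Since $T$ is locally finite and rooted, $V(T)$ is countable, and each factor $\Sym(\pi_0(v))$ is a finite group (a finite product of finite symmetric groups). Hence $|\Aut(T)| \leq \aleph_0^{\aleph_0} = 2^{\aleph_0}$.

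For the \emph{lower bound}, I would reuse the analysis from the proof of the dichotomy theorem: under the hypothesis that $\Aut(T)$ is infinite, there are infinitely many \emph{permutable} vertices $v \in V(T)$, that is, vertices with $|\Sym(\pi_0(v))| \geq 2$ (equivalently, vertices whose image in the minimal covering quotient is the source of a double edge). Restricting the product $\prod_{v \in V(T)} \Sym(\pi_0(v))$ to these countably many non-trivial factors gives a subset of cardinality at least $2^{\aleph_0}$, and the portrait map turns each such choice into a distinct automorphism of $T$.

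Combining the two bounds gives $|\Aut(T)| = 2^{\aleph_0}$ in the infinite case, completing the proof. I do not anticipate any real obstacle here: the previous theorem already does the hard work of ruling out intermediate cardinalities by exhibiting infinitely many independent non-trivial local permutations, and the corollary is essentially a bookkeeping statement about the size of a countable product of finite sets. The only minor point to be careful about is that the ``uncountable'' direction of the preceding theorem is proven precisely by exhibiting infinitely many permutable vertices, so the lower bound here is not an additional hypothesis but a genuine byproduct of that proof.
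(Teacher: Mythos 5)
Your argument is correct and is exactly the one the paper intends (the paper does not write out a separate proof, leaving the corollary as an immediate consequence of the preceding theorem and the portrait bijection of Theorem~\ref{PortraitBijection}). Both the upper bound via the countable product of finite factors and the lower bound via the infinitely many permutable vertices produced by a recurrent double edge match the paper's reasoning.
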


\section{Action of tree automorphisms on regular languages} \label{sec:switching.between.minimal.and.nonminimal}

We now return to the general setting of Section \ref{sec:describing.automorphisms.of.universal.covering.trees}: Thus $\mathcal A = (A, \ell)$ denotes an arbitrary DFA over some alphabet $\Sigma$ with associated regular language $\mathcal L$ and path language tree $(T, \ell_T)$. The goal of this section is to describe the action of $\Aut(T)$ on $\mathcal L$. If $\mathcal A$ is a geometrically minimal DFA for $\mathcal L$, then this was achieved in Section \ref{sec:automorphisms.of.selfsimilar.trees}; here we are interested in the case where $\mathcal A$ is \emph{not} geometrically minimal. Since $\Aut(T)$ is best described in terms of a geometrically minimal automaton, we proceed as follows.
\begin{con}[Geometric minimization of $\mathcal A$]
Let $A = (Q, \cE, s,t, \epsilon)$ and let $A' =  = (Q', \cE', s',t', \epsilon')$ be the minimal covering quotient of $A$. Recall that $A'$ can be computed from $A$ using the geometric Moore algorithm (cf.\ Definition \ref{def:Moore}). We denote by $\pi: V(T) \to Q$ the universal covering map which maps each word $v$ to the state in which $\mathcal A$ is after reading $v$. By Proposition \ref{MinCovQ1} there is a covering morphism $p: A \to A'$, and $\pi' := p \circ \pi: T \to A'$ is a universal covering morphism. 

We now identify $\cE$ with a subset of $Q \times \Sigma \times Q$ by identifying each edge $e$ with $(s(e), \ell(e), t(e))$. For every edge $e' \in \mathcal E'$ we denote
\[
\alpha_{e'} := p_1^{-1}(e') \subset \cE \qand \Sigma'  := \{ \alpha_{e'} \vert e' \in \mathcal{E}' \}.
\]
Then $\Sigma'$ is a partition of $\mathcal E$; we denote by $[q, \alpha, q'] \in \Sigma'$ the partition class of $(q, \alpha, q') \in \mathcal E$. We may then define an admissible labelling of $A'$ by
\[ \ell' : \mathcal{E}' \rightarrow \Sigma', \qquad e' \mapsto \alpha_{e'}. \]
Note that, by definition,
\begin{equation}\label{ell'}
p_1(e) = e' \iff e \in \ell'(e').
\end{equation}
We refer to the DFA $\mathcal A' = (A', \ell')$ as the \emph{geometric minimization} of $\mathcal A$ and denote by $\mathcal L' \subset (\Sigma')^*$ the associated regular language.
We also write $\ell'_T$ for the admissible labelling on $T$ induced by $\ell'$ via $\pi'$. 
\end{con}
\begin{remark}[Action on $\mathcal L$] With notation as above we have bijections
\[
\ell_T: V(T) \to \mathcal L \quad \text{and} \quad \ell'_T: V(T) \to \mathcal L',
\]
Since $\mathcal A'$ is a geometrically minimal automaton for $(T, \ell'_T)$, we have $\Aut(T) \cong \Sym(\mathcal A')$, and we identify automorphisms with the corresponding admissible portraits $\sigma' =  (\pre{v}\sigma')_{v \in V(T)}$. Then the action of $\sigma'$ on $\mathcal L'$ is given by
  \[\sigma'(\alpha_1' \cdots \alpha_n') = \pre{e}\sigma'(\alpha_1') \pre{(\ell'_T)^{-1}(\alpha_1')}\sigma'(\alpha_2') \cdots \pre{(\ell'_T)^{-1}(\alpha_1' \cdots \alpha_{n-1}')}.\sigma'(\alpha_n'),\]
and the action of $\sigma'$ on $\mathcal L$ is given by
\[
\sigma'(w) = P^{-1}(\sigma'(P(w))), \quad \text{where} \quad
P := \ell'_T \circ \ell_T^{-1}: \mathcal L \to \mathcal L'. \]
If we identify words in $\mathcal L$ and $\mathcal L'$ with paths in $A$ and $A'$ emanating from the respective initial states, then $P$ is induced by $p$ and $P^{-1}$ is given by applying the unique path lifting property to $p$.
\end{remark}
 In order to describe the action of $\Aut(T)$ on $\mathcal L$ we need to describe the maps $P$ and $P^{-1}$ more explicitly. 
 \begin{con}[Path pushing]
If $\alpha_1 \cdots \alpha_n \in \mathcal L$, then the unique path in $A$ from $\epsilon$ with label $\alpha_1 \cdots \alpha_n$ is given by $(q_0, \alpha_1, q_1), \dots, (q_{n-1}, \alpha_{n-1}, q_n)$, where $q_0 = \epsilon$ and $q_k = \delta(q_{k-1}, \alpha_k)$. We then have
\[
P(\alpha_1 \cdots \alpha_n) = \alpha_1' \cdots \alpha_n', \quad \text{where} \quad \alpha'_k := [q_{k-1}, \alpha_k, q_k].
\]
\end{con}
For the description of $P^{-1}$ we use:
\begin{lemma} \label{lem:choosingtherightrepresentative}
    Let $e'_1 * e'_2$ be a path of length two in $A'$ and $(q_0, \alpha_1, q_1) \in \ell'(e'_1)$. Then there exists a unique $(q'_1, \alpha_2, q_2) \in \ell'(e'_2)$ such that $q_1 = q'_1$.
\end{lemma}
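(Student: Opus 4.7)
The plan is to derive the lemma directly from the unique path lifting property of the covering morphism $p : A \to A'$. First I would record the setup: the condition $(q_0, \alpha_1, q_1) \in \ell'(e'_1)$ together with \eqref{ell'} gives $p_1((q_0, \alpha_1, q_1)) = e'_1$, and in particular $p_0(q_1) = t'(e'_1)$. Since $e'_1 * e'_2$ is a path in $A'$, we have $t'(e'_1) = s'(e'_2)$, and hence $p_0(q_1) = s'(e'_2)$, so $e'_2$ is available to be lifted starting from $q_1$.

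Next I would invoke Proposition \ref{Covering1}: because $p$ is a covering morphism, it has the unique path lifting property, so $p_1$ restricts to a bijection $q_1^+ \to p_0(q_1)^+$. Applying this to the path of length one consisting of $e'_2$, there is a unique edge $e \in q_1^+$ with $p_1(e) = e'_2$. Writing $e = (q_1, \alpha_2, q_2)$ for the corresponding triple, the relation \eqref{ell'} rephrases $p_1(e) = e'_2$ as $(q_1, \alpha_2, q_2) \in \ell'(e'_2)$, giving existence.

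For uniqueness, suppose $(q'_1, \alpha_2, q_2), (q'_1, \tilde{\alpha}_2, \tilde{q}_2) \in \ell'(e'_2)$ are two triples with $q'_1 = q_1$. Then by \eqref{ell'} both correspond to edges in $q_1^+$ mapped to $e'_2$ under $p_1$, and the bijectivity of $p_1 : q_1^+ \to p_0(q_1)^+$ forces them to coincide. This completes the argument. I do not expect any real obstacle here; the lemma is essentially a one-line consequence of the UPLP once the compatibility $p_0(q_1) = s'(e'_2)$ is spelled out, and the only minor point is to translate cleanly between edges in $\mathcal E$ and triples in $Q \times \Sigma \times Q$ via \eqref{ell'}.
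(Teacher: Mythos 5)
Your proof is correct and follows essentially the same approach as the paper: invoking the unique path lifting property of the covering morphism $p$ and translating between edges and triples via \eqref{ell'}. The only cosmetic difference is that you lift the single edge $e'_2$ starting from $q_1$ (after first checking $p_0(q_1)=s'(e'_2)$), whereas the paper lifts the full length-two path $e'_1*e'_2$ starting from $q_0$ and then reads off its second edge; these are interchangeable applications of the UPLP.
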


\begin{proof}
    By the UPLP of $p$, the path $e'_1 * e'_2$ has a unique lift $e_1 * e_2$ that starts at $q_0$. Since $p_1(e_2') = e_2$ we have $e_2 \in \ell'(e'_2)$ by \eqref{ell'} and $s(e_2) = t(e_1) = q_1$. Uniqueness follows from the uniqueness in the UPLP.
\end{proof}
\begin{con}[Path lifting] \label{con:Algorithmtoliftwords}
Let $\alpha'_1 \cdots \alpha'_n \in \mathcal L'$. Since words in $\mathcal L'$ correspond to paths in $A'$ that start at the initial state $\epsilon'$ and $p(\epsilon) = \epsilon'$, there exists an element in $\alpha'_1$ of the form $(\epsilon, \alpha_1, q_1)$. This element is unique due to the uniqueness in the UPLP. By Lemma \ref{lem:choosingtherightrepresentative}, there exists a unique element $(q_1, \alpha_2, q_2) \in \alpha'_2$. We inductively find representatives of the $\alpha'_i$ such that
$(q_{i-1}, \alpha_i, q_i) \in \alpha'_i$ and hence $p_1(q_{i-1}, \alpha_i, q_i) = \alpha_i'$ by \eqref{ell'}. This means precisely that the path in $A$ labelled by $\alpha_1, \dots, \alpha_n$ is a lift of the path labelled $\alpha_1', \dots, \alpha_n'$ in $A'$, i.e.
\[
P^{-1}(\alpha_1' \cdots \alpha_n') = \alpha_1 \cdots \alpha_n.
\]
\end{con}
Using the constructions above we can now compute the action of $\Aut(T)$ on $\mathcal L$. We illustrate this in the following example.
    \begin{figure}
        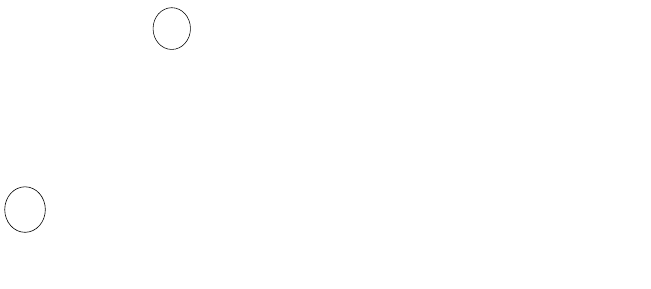
        \caption{An automaton $\mathcal A$ and its geometric minimization $\mathcal A'$}
        \label{fig:final.example}
    \end{figure}
\begin{example} \label{exam:how.to.switch.labelings}
Let $\mathcal A = (A, \ell)$ be the automaton with states $\epsilon, W, X, Y, Z$ over the alphabet $\Sigma = \{a,b,c,d\}$ depicted on the left of Figure \ref{fig:final.example}. As seen before, this automaton is not geometrically minimal; its Nerode congruence is given by $ \epsilon \not \equiv W \equiv X \equiv Y \equiv Z $. The corresponding minimal covering quotient $A'$ thus has two states $\epsilon' = \{\epsilon\}$ and $v = \{W, X, Y, Z\}$. We can choose a covering morphism from $\pi: A \to A'$ such that the fibers of $\pi_1$ are given by
    \[ \alpha_1 = \{(\epsilon, a, W)\}, \quad \alpha_2 = \{(\epsilon, b, X)\}, \quad \alpha_3 = \{(\epsilon, c, Y)\}, \quad \alpha_4 = \{(\epsilon, d, Z)\}, \]
    \[ \beta_1 = \{ (W, a, W), (X, b, X), (Y, c, Y), (Z, d, Z) \}, \]
    \[ \beta_2 = \{ (W, c, Y), (X, a, W), (Y, d, Z), (Z, c, Y) \}. \]
The resulting geometric minimization $\mathcal A'$ is depicted on the right of Figure \ref{fig:final.example}. It is an automaton over $\Sigma' = \{\alpha_1, \dots, \alpha_4, \beta_1, \beta_2\}$. An example of an admissible portrait for $\mathcal A'$ is $\sigma'$ given by
\[
 \pre{\epsilon}{\sigma'} = (\alpha_1, \alpha_2), \;  \pre{\alpha_2}{\sigma'} = (\beta_1, \beta_2),\; \pre{\alpha_2 \beta_2 \beta_1 \beta_2}{\sigma'} = (\beta_1, \beta_2) \text{ and } \pre{v}\sigma' = \mathrm{Id}
\]
for all other vertices $v$. An element $w \in \mathcal L$ is given by $w := baacdc$. We can compute $\sigma'(w)$ as follows:

\noindent\textsc{Step 1}: The path in $\mathcal A$ emanating from $\epsilon$ with labels $baacdc$ is given by
\[
(\epsilon, b, X)*(X,a,W)*(W,a,W)*(W,c,Y)*(Y,d,Z)*(Z,c,Y),
\]
and hence
\begin{align*}
P(baacdc) &= [\epsilon, b, X][X,a,W][W,a,W][W,c,Y][Y,d,Z][Z,c,Y]\\ &= \alpha_2\beta_2\beta_1\beta_2\beta_2\beta_2.
\end{align*}
\noindent\textsc{Step 2}: By definition of $\sigma'$ we have
\begin{align*}
 \sigma'(\alpha_2\beta_2\beta_1\beta_2\beta_2\beta_2)
=& \;  \pre{\epsilon}{\sigma'}(\alpha_2) \pre{\alpha_2}{\sigma'}(\beta_2) \pre{\alpha_2 \beta_2}{\sigma'}(\beta_1)\\ 
& \;\pre{\alpha_2 \beta_2 \beta_1}{\sigma'}(\beta_2) \pre{\alpha_2 \beta_2 \beta_1 \beta_2}{\sigma'}(\beta_2) \pre{\alpha_2 \beta_2 \beta_1 \beta_2 \beta_2}{\sigma'}(\beta_2)\\
=& \; \alpha_1 \beta_1 \beta_1 \beta_2 \beta_1 \beta_2.
\end{align*}
\noindent\textsc{Step 3}: Since 
\begin{align*}
  \alpha_1 \beta_1 \beta_1 \beta_2 \beta_1 \beta_2 =   [(\epsilon, a, W)] [(W, a, W)] [(W, a, W)] [(W, c, Y)] [(Y, c, Y)] [(Y, d, Z)],
\end{align*}
we have $P^{-1}( \alpha_1 \beta_1 \beta_1 \beta_2 \beta_1 \beta_2 ) = aaaccd$. We deduce that
\[
\sigma'(baacdc) = aaaccd.
\]
\end{example}

\bibliography{mybib}
\bibliographystyle{alpha}

\end{document}